\numberwithin{equation}{section}
\newcommand{\bE}{\mathbb E}
\newcommand{\bF}{\mathbb F}
\newcommand{\bH}{\mathbb H}
\newcommand{\bL}{\mathbb L}
\newcommand{\bP}{\mathbb P}
\newcommand{\bR}{\mathbb R}
\newcommand{\bS}{\mathbb S}
\newcommand{\cF}{\mathcal F}
\newcommand{\cU}{\mathcal U}
\renewcommand{\epsilon}{\varepsilon}
\begin{document}
\theoremstyle{plain}

\newtheorem{theorem}{Theorem}[section]
\newtheorem{lemma}[theorem]{Lemma}
\newtheorem{example}[theorem]{Example}
\newtheorem{proposition}[theorem]{Proposition}
\newtheorem{corollary}[theorem]{Corollary}
\newtheorem{definition}[theorem]{Definition}
\newtheorem{assumption}[theorem]{Assumption}
\newtheorem{condition}[theorem]{Condition}
\theoremstyle{definition}
\newtheorem{remark}[theorem]{Remark}
\newtheorem{SA}[theorem]{Standing Assumption}

\newcommand{\red}[1]{\textcolor{red}{#1}}
\newcommand{\blue}[1]{\textcolor{blue}{#1}}
\newcommand{\gray}[1]{\textcolor{gray}{#1}}
\newcommand{\violet}[1]{\textcolor{black}{#1}}
\newcommand{\white}[1]{\textcolor{white}{#1}}
\newcommand{\violetNew}[1]{\textcolor{violet}{#1}}

\makeatletter
\newcommand{\mylabel}[2]{#2\def\@currentlabel{#2}\label{#1}}
\makeatother

\makeatletter
\@namedef{subjclassname@2020}{%
	\textup{2020} Mathematics Subject Classification}
\makeatother

\title[]{Convergence of a Deep BSDE solver with jumps}
\author[A. Gnoatto]{Alessandro Gnoatto}
\address{Via Cantarene 24, 37129 Verona, Italy.}
\email{alessandro.gnoatto@univr.it}
\author[K. Oberpriller]{Katharina Oberpriller}
\address{Theresienstr. 39, 80333 M\"unchen, Germany.}
\email{oberpriller@lmu.de}
\author[A. Picarelli]{Athena Picarelli}
\address{Via Cantarene 24, 37129 Verona, Italy.}
\email{athena.picarelli@univr.it}

\keywords{Forward-backward SDE, Jump diffusion, Deep learning,
PIDE, Neural Network}

\subjclass[2020]{}

\date{\today}

\maketitle

\begin{abstract}
We study the error arising in the numerical approximation of FBSDEs and related PIDEs by means of a deep learning-based method. Our results focus on decoupled FBSDEs with jumps and extend the seminal work of \cite{Han_Long_2020} analyzing the numerical error of the deep BSDE solver proposed in \cite{ehanjen17}. We provide a priori and a posteriori error estimates for the finite and infinite activity case.
\end{abstract}

\section{Introduction}


In this paper we provide a priori and a posteriori error estimates for the numerical solution of a non-linear partial integro-differential equation (PIDE) by means of a deep learning-based algorithm. The PIDE we would like to numerically solve by finding the function $u:[0,T] \times \mathbb{R}^d \to \mathbb{R}$ is of the form
\begin{align}
\begin{aligned}
 -u_t(t, x)-\mathscr{L} u(t, x) -f\left(t, x, u(t, x), D_x u(t, x) \sigma(x), \mathscr{J} u(t, x)\right)&=0, \quad \quad(t, x) \in[0, T) \times \mathbb{R}^d, \label{eq:PIDEintro1}\\
 u(T, x)&=g(x), \quad x \in \mathbb{R}^d,
 \end{aligned}
\end{align}
where $u_t$ denotes the partial derivative with respect to time, $D_x,D_x^2$ denote the gradient and the Hessian with respect to the space variable and
\begin{align}
\mathscr{L} u(t, x)= & \left\langle {b}(x), D_x u(t, x)\right\rangle+\frac{1}{2}\left\langle\sigma(x) D_x^2 u(t, x), \sigma(x)\right\rangle \nonumber  \\
& +\int_{\mathbb{R}^d}\left(u(t, x+\Gamma(x, z))-u(t, x)-\left\langle\Gamma(x, z), D_x u(t, x)\right\rangle\right) \nu(d z),  \label{eq:PIDEintro3}\\
\mathscr{J} u(t, x)= & \int_{\mathbb{R}^d}(u(t, x+\Gamma(x, z))-u(t, x)) \nu(d z) . \nonumber 
\end{align}
Our analysis covers PIDEs associated to jump processes of finite and infinite activity and also of infinite variation. Such type of PIDEs arise naturally in finance in the context of option pricing problems when the underlying security/state variable follows a process with discontinuous sample paths such as CGMY \cite{carr2002fine}, Merton \cite{Merton1976} or affine jump diffusions \cite{duffie2000transform} among many.

When the dimension of the state space is low (typically up to $d=3$) the standard approach for the numerical solution of the PIDE above involves an application of the finite difference or finite element method, see \cite{Cont2003}, \cite{Hilber2013}. However, when the dimension is large, such approaches suffer from an exponential growth of their complexity known as the curse of dimension. This has motivated the recent increasing popularity of approaches employing deep learning techniques, mostly based on the Feynman-Ka\'c representation formula, which links PIDEs to system of forward backward stochastic differential equations (FBSDEs).

More precisely, for $t \in [0,T]$ and $x \in \bR^d,$ given the FBSDE
\begin{align} \label{eq:SDEIntro}
	X_s^{t,x}=x + \int_t^s b(X_{r-}^{t,x}) \textnormal{d}r + \int_t^s \sigma(X_{r-}^{t,x} )^{\top} \textnormal{d}W_r + \int_t^s \int_{\mathbb{R}^d} \Gamma (X_{r-}^{t,x},z) \widetilde{N}(\textnormal{d}r,\textnormal{d}z), \quad {s \in [t,T],}\end{align}
and 
\begin{align} \label{eq:BSDEIntro}
Y_{  {s}}^{t,x}&=g(X_T^{t,x})+\int_{  {s}}^T f\left(r,X_{r-}^{t,x}, Y_{r-}^{t,x}, Z_r^{t,x}, \int_{\mathbb{R}^d} U_r^{t,x}(z) \nu (\textnormal{d}z) \right)\textnormal{d}r - \int_{  {s}}^T (Z_r^{t,x})^{\top}\textnormal{d}W_r  \nonumber \\
	& \quad - \int_{  {s}}^T \int_{\mathbb{R}^d} U_r^{t,x}(z) \widetilde{N} (\textnormal{d}r,\textnormal{d}z), \quad {s \in [t,T].}
\end{align}
the Feynman-Ka\'c representation formula states that $u(s,x)=Y^{t,x}_s$, meaning that the PIDE can be solved by considering the associated FBSDE. The literature on FBSDEs is large. Apart from the pioneering work of \cite{pardoux1990adapted} in the diffusive case, we mention \cite{barles_buckdahn_pardoux} that covers the jump diffusion case which is relevant for the present work. Further information can be found in \cite{delong} or \cite{Zhang2017}. Artificial Neural Networks (ANNs) are then typically employed to parametrize quantities related to the solution of the FBSDE.

The seminal work in this stream of literature is \cite{ehanjen17} coupled with the convergence analysis of \cite{Han_Long_2020}. In these works, formulated in a setting without jumps, the BSDE is discretized forward in time using an Euler discretization and then a family of ANNs, one ANN for each time step, is used to parametrize the control process of the BSDE. The parameters of the ANN are chosen as to minimize the squared expected distance between the terminal condition of the FBSDE and the Euler discretization of the BSDE at terminal time. Alternatively, \cite{hpw2021} base their approach on a backward procedure based on the dynamic programming principle. We also mention \cite{bbcjn2021} based on the idea of operator splitting and \cite{SIRIGNANO2018}. Extensions of the approaches above to the case with jumps have been proposed. The work \cite{vk2021} generalized \cite{bbcjn2021}, but considers only jumps of finite activity. A first generalization of \cite{hpw2021} is studied in \cite{castro2021}, whose result have been recently refined in \cite{jakobsen_mazid_2024}.
Of particular relevance for the present paper is \cite{gnoatto_patacc_picarelli} which extends \cite{ehanjen17} to the case of jumps. Further algorithms are studied and compared in \cite{abdw24}.

Concerning the convergence analysis of such algorithms, our main reference is the aforementioned contribution \cite{Han_Long_2020} that considers weakly coupled FBSDEs without jumps and provides the convergence analysis for the approach of \cite{ehanjen17}. A first extension involving jumps is proposed in \cite{wang_wang_li_gao_fu}: weakly coupled FBSDEs are treated, only jumps of finite activity are considered and the analysis is limited to a posteriori estimates. Our analysis of a posteriori estimates (which excludes weak coupling) is in line with that of \cite{wang_wang_li_gao_fu} but we also consider infinite activity jumps and, more importantly, we obtain a priori estimates which are missing in \cite{wang_wang_li_gao_fu}. Further contribution on the error analysis of deep learning based solvers for FBSDEs/PIDEs include \cite{castro2021} and \cite{jakobsen_mazid_2024} for the deep backward dynamic programming principle of \cite{hpw2021} and \cite{vk2022} which generalized \cite{bbcjn2021}.

This paper is organized as follows: in Section \ref{sec:settingPrelim} we present the problem and illustrate the generalization of the algorithm of \cite{ehanjen17} that constitutes our object of analysis. In Section \ref{sec:aPosterioriEstimates} we provide a self contained derivation of a posteriori estimates that is in line with the results of \cite{wang_wang_li_gao_fu} but is extended to consider the infinite activity case in Section \ref{sec:InfiniteActivityCase}. Section \ref{sec:aPrioriEstimates} provides a priori estimates both for the finite and infinite activity case.

\section{Setting and Preliminaries}\label{sec:settingPrelim}

Given a fixed time horizon $T <\infty$, let $(\Omega, \cF, \bF, \bP)$ be a filtered probability space with the filtration $\bF=(\cF_t)_{t \in [0,T]}$ satisfying the usual conditions. We assume that the filtered probability space supports an $\bR^d$-valued standard Brownian motion $W=(W_t)_{t \in [0,T]}$ and a Poisson random measure $N$ with associated L\'evy measure $\nu$, such that 
\begin{align} \label{eq:AssumpLevyMeasure}
\nu(\lbrace 0 \rbrace)=0, \quad \int_{\bR^d} (1 \wedge \vert z \vert^2) \nu( \textnormal{d}z)<\infty, \quad \text{and} \quad \int_{\vert z \vert \geq 1 } \vert z\vert^2 \nu(\textnormal{d} z)<\infty.
\end{align}
We denote the compensated random measure by
\begin{align} \label{eq:CompensatedRandomMeasure}
	\widetilde{N}(\textnormal{d}t,\textnormal{d}z)=N(\textnormal{d}t,\textnormal{d}z)-\nu(\textnormal{d}z)\textnormal{d}t.
\end{align}
In order to define the solution of FBSDEs we introduce the following spaces.
\begin{itemize}
	\item $\bL_{\cF}^2(\bR^d)$ is the space of all $\cF$-measurable random variables $X: \Omega \to \bR^d$ satisfying
	\begin{equation*}
	\| X \|^2:=\bE \left[ \big \vert X \big \vert^2\right]< \infty;
	\end{equation*}
	\item $\bH_{[0,T]}^2(\bR^d)$ is the space of all predictable processes $\phi:\Omega\times [0,T] \to \bR^d$ satisfying
	\begin{equation*}
		\| \phi \|^2_{\bH^2}:= \bE \left[ \int_0^T \vert \phi_t \vert^2\textnormal{d}t\right]< \infty;	
	\end{equation*}
	\item $\bH_{[0,T],N}^2(\bR)$ is the space of all predictable processes $\phi:\Omega\times [0,T]\times \bR^d \to \bR$ satisfying
	\begin{equation*}
		\| \phi \|^2_{\bH^2_N}:= \bE \left[ \int_0^T \int_{\bR^d}\vert \phi_t(z) \vert^2 \nu(\textnormal{d}z)\textnormal{d}t\right]< \infty;
	\end{equation*}
	\item $\bS_{[0,T]}^2(\bR^d)$ is the space of all $\bF$-adapted c\`{a}dl\`{a}g processes $\phi:\Omega \times [0,T] \to \bR^d$ satisfying
	\begin{equation*}
		\|\phi \|_{\bS^2}^2:=\bE \left[ \sup_{t \in [0,T]} \big \vert  \phi_t \big \vert ^2\right]< \infty.
	\end{equation*}
\end{itemize}
For $t \in [0,T]$ and $x \in \bR^d,$ we consider the following FBSDE defined by
\begin{align} \label{eq:SDE}
	X_s^{t,x}=x + \int_t^s b(X_{r-}^{t,x}) \textnormal{d}r + \int_t^s \sigma(X_{r-}^{t,x} )^{\top} \textnormal{d}W_r + \int_t^s \int_{\mathbb{R}^d} \Gamma (X_{r-}^{t,x},z) \widetilde{N}(\textnormal{d}r,\textnormal{d}z), \quad s \in [t,T],\end{align}
and 
\begin{align} \label{eq:BSDE}
Y_{  {s}}^{t,x}&=g(X_T^{t,x})+\int_{  {s}}^T f\left(r,X_{r-}^{t,x}, Y_{r-}^{t,x}, Z_r^{t,x}, \int_{\mathbb{R}^d} U_r^{t,x}(z) \nu (\textnormal{d}z) \right)\textnormal{d}r - \int_{  {s}}^T (Z_r^{t,x})^{\top}\textnormal{d}W_r  \nonumber \\
	& \quad - \int_{  {s}}^T \int_{\mathbb{R}^d} U_r^{t,x}(z) \widetilde{N} (\textnormal{d}r,\textnormal{d}z), \quad {s \in [t,T].}
\end{align}
Throughout the paper we make us of the following assumptions.
\begin{assumption} \label{assump:ExistenceUniquenessFBSDE} The following holds:
\begin{enumerate}
\item[\textbf{(A1)}] 
\begin{enumerate}
	\item The vector fields $b: \bR^d \to \bR^d, \sigma:\bR^d \to \bR^{d \times d}$ are measurable functions which are Lipschitz continuous with Lipschitz constants $K_b$ and $K_{\sigma}$, respectively.
	\item The vector field $ \Gamma: \bR^d \times \bR^d \to \bR^d$ is measurable and satisfies for some constant $K_{\Gamma} >0$
	 \begin{align*}
	 	\int_{\vert z \vert <1} \big \vert \Gamma(x,z)-\Gamma(x',z) \big \vert^2\nu(\textnormal{d}z)& \leq K_{\Gamma}  \vert x-x'  \vert^2, \\
	 	\int_{\vert z \vert <1} \big \vert \Gamma(x,z) \big \vert^2\nu(\textnormal{d}z) &\leq K_{\Gamma} ( 1+ \vert x  \vert^2 ).
	 	\end{align*}
	 \item The function $f:[0,T] \times \bR^d \times \bR \times \bR^d \times \bR^d \to \bR$ is Lipschitz continuous with respect to the state variables, uniformly in $t$, i.e. there exists $K_f >0$ such that
	  \begin{align*}
 	 \big \vert f(t,x,y,z,u)-f(t,x',y',z',u') \big \vert \leq K_f \big(\big \vert  x-x' \big \vert + \big \vert  y-y' \big \vert +\big \vert  z-z' \big \vert + \big \vert  u-u' \big \vert\big) 
 	\end{align*}
 	for all $(t,x,y,z,u), (t,x',y',z',u') \in [0,T] \times \bR^d \times \bR \times \bR^d \times \bR^d$.
	\item The function $g: \bR^d \to \bR$ is Lipschitz continuous with Lipschitz constant $K_g$. 
\end{enumerate}
\vspace{5pt}
\item [\textbf{(A2)}]
\begin{enumerate}
 \item The function $f:[0,T] \times \bR^d \times \bR \times \bR^d \times \bR \to \bR$ is $\frac{1}{2}$-H\"{o}lder continuous with respect to $t$ and constant $K_{f,t}.$
 \item For each $z\in \mathbb{R}^d$ the mapping $x \mapsto \Gamma(x,z)$ admits a Jacobian matrix $\nabla_x \Gamma (\cdot,z)$ such that the function $a(\cdot, \cdot;z): \mathbb{R}^d \times \mathbb{R}^d \to \bR$ defined by
 \begin{equation} \label{eq:AssumpGammaErrorEstiamte1}
a(x,\xi;z):=\xi^{\top}\left( \nabla_x \Gamma(x,z)+ \textnormal{I}_{d\times d}\right)\xi
 \end{equation}
satisfies one of the following conditions uniformly in $(x,\xi) \in \bR^d \times \bR^d$
\begin{equation} \label{eq:AssumpGammaErrorEstiamte2}	
a(x,\xi;z) \geq \vert \xi \vert^2 C^{-1} \quad \textnormal{or} \quad a(x,\xi;z) \leq -\vert \xi\vert^2 C^{-1}.
\end{equation}
\end{enumerate}
\item [\textbf{(A3)}] 
	Let the function $\Gamma: \bR^d \times \bR^d \to \bR^d$ be of the form
	\begin{equation} \label{eq:Gamma}
		\Gamma(x,z):=\gamma(x)z,
	\end{equation}
	for some Lipschitz continuous function $\gamma: \bR^d \to \bR^{d \times d}$ with Lipschitz constant $K_{\gamma}$.
\end{enumerate}
\end{assumption}

In particular, Assumption \ref{assump:ExistenceUniquenessFBSDE} \textbf{(A1)} guarantees the existence of a unique solution \linebreak $(X^{t,x}, Y^{t,x}, Z^{t,x}, U^{t,x}) \in \bS_{[0,T]}^2(\bR^d) \times \bS_{[0,T]}^2(\bR) \times \bH_{[0,T]}^2(\bR^d) \times \bH_{[0,T],N}^2(\bR) $ to the FBSDE in \eqref{eq:SDE}-\eqref{eq:BSDE}, see Theorem 4.1.3 in \cite{delong} and Theorem 6.2.9 in \cite{applebaum}.
\begin{theorem}
	Under Assumption \ref{assump:ExistenceUniquenessFBSDE} \textbf{(A1)} there exists a unique solution \linebreak $(X^{t,x}, Y^{t,x}, Z^{t,x}, U^{t,x}) \in \bS_{[0,T]}^2(\bR^d) \times \bS_{[0,T]}^2(\bR) \times \bH_{[0,T]}^2(\bR^d) \times \bH_{[0,T],N}^2(\bR) $ to the FBSDE \eqref{eq:SDE}-\eqref{eq:BSDE}.
\end{theorem}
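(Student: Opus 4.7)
The plan is to exploit the decoupled structure of \eqref{eq:SDE}--\eqref{eq:BSDE}: since the coefficients of the forward SDE do not involve $(Y,Z,U)$, I would first construct $X^{t,x} \in \bS_{[0,T]}^2(\bR^d)$ and then, with that path fixed, solve the BSDE with jumps for $(Y^{t,x}, Z^{t,x}, U^{t,x})$.

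For the forward equation I would run a Picard iteration in $\bS_{[0,T]}^2(\bR^d)$. Setting $X^{(0)}\equiv x$ and defining $X^{(n+1)}$ by plugging $X^{(n)}$ into the right-hand side of \eqref{eq:SDE}, I would combine the Burkholder--Davis--Gundy inequality for the Brownian integral with the $L^2$-isometry for the compensated Poisson integral. The Lipschitz estimates in \textbf{(A1)}(a), the local $L^2(\nu)$ bound on $\Gamma$ in \textbf{(A1)}(b) (used on $\{\vert z\vert<1\}$, complemented by the finite activity of large jumps and the moment condition \eqref{eq:AssumpLevyMeasure}), give an estimate of the form
\[
\bE\Big[\sup_{s\in[t,u]}\vert X^{(n+1)}_s-X^{(n)}_s\vert^2\Big]
\leq C\int_t^u \bE\Big[\sup_{r\in[t,s]}\vert X^{(n)}_r-X^{(n-1)}_r\vert^2\Big]\,\textnormal{d}s,
\]
with $C$ depending on $K_b,K_\sigma,K_\Gamma,T$. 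Iterating yields a Cauchy sequence whose limit is a strong solution of \eqref{eq:SDE} in $\bS_{[0,T]}^2(\bR^d)$; the same inequality applied to two solutions combined with Gronwall's lemma gives uniqueness.

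Given $X^{t,x}$, for the BSDE I would define a Picard map $\Phi$ on $\bS_{[0,T]}^2(\bR)\times\bH_{[0,T]}^2(\bR^d)\times\bH_{[0,T],N}^2(\bR)$ as follows: for $(Y,Z,U)$ in that space, set
\[
M_s := \bE\Big[g(X_T^{t,x})+\int_t^T f\big(r,X_{r-}^{t,x},Y_{r-},Z_r,\textstyle\int_{\bR^d}U_r(z)\,\nu(\textnormal{d}z)\big)\,\textnormal{d}r\,\Big|\,\cF_s\Big],
\]
which is a square-integrable $\bF$-martingale thanks to the Lipschitz bounds in \textbf{(A1)}. By the martingale representation theorem for the filtration generated by $W$ and $N$, there exist $\widetilde Z\in\bH_{[0,T]}^2(\bR^d)$ and $\widetilde U\in\bH_{[0,T],N}^2(\bR)$ such that $M_s-M_t=\int_t^s\widetilde Z_r^\top\textnormal{d}W_r+\int_t^s\int_{\bR^d}\widetilde U_r(z)\,\widetilde N(\textnormal{d}r,\textnormal{d}z)$, and the output $\widetilde Y$ is read off from the integrated BSDE identity. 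Applying It\^o's formula to $e^{\beta s}\vert \widetilde Y_s-\widetilde Y'_s\vert^2$ for outputs corresponding to two inputs and choosing $\beta$ sufficiently large relative to $K_f$ gives contraction of $\Phi$ in the weighted norm
\[
\Vert(Y,Z,U)\Vert_\beta^2 := \bE\Big[\int_t^T e^{\beta r}\Big(\vert Y_r\vert^2+\vert Z_r\vert^2+\textstyle\int_{\bR^d}\vert U_r(z)\vert^2\,\nu(\textnormal{d}z)\Big)\textnormal{d}r\Big].
\]
Banach's fixed point theorem then produces the unique triple $(Y^{t,x},Z^{t,x},U^{t,x})$; the $\bS_{[0,T]}^2(\bR)$-regularity of $Y^{t,x}$ follows a posteriori by applying BDG to the BSDE identity.

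The main obstacle is making rigorous sense of the argument $\int_{\bR^d}U_r(z)\nu(\textnormal{d}z)$ inside $f$: the a priori control built into $\bH_{[0,T],N}^2(\bR)$ is only an $L^2(\nu)$-type estimate, so under infinite activity this $L^1(\nu)$ pairing is not automatically finite. The standard resolution, adopted in \cite{delong}, is to interpret the dependence of $f$ on the nonlocal component through a weighted integral $\int_{\bR^d} U_r(z)\varrho(z)\,\nu(\textnormal{d}z)$ for a fixed $\varrho\in L^2(\nu)$, so that Cauchy--Schwarz gives the bound $\leq\Vert\varrho\Vert_{L^2(\nu)}\Vert U_r(\cdot)\Vert_{L^2(\nu)}$ needed both to verify that $M$ is an $L^2$-martingale and to close the contraction estimate. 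Once this interpretation is fixed, the rest of the argument is the standard Pardoux--Peng fixed-point scheme extended to the jump setting, which is precisely the content of Theorem~4.1.3 in \cite{delong} and Theorem~6.2.9 in \cite{applebaum}.
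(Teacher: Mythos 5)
Your argument is correct and is precisely the standard one behind the result: the paper gives no proof of this theorem, deferring entirely to Theorem 4.1.3 in \cite{delong} and Theorem 6.2.9 in \cite{applebaum}, both of which proceed exactly as you do (Picard iteration for the decoupled forward equation, then martingale representation plus a Banach fixed point in a $\beta$-weighted norm for the backward one). Your closing caveat is also well taken: the argument $\int_{\bR^d}U_r(z)\,\nu(\textnormal{d}z)$ of $f$ is controlled by the $\bH^2_{[0,T],N}$ norm only when $\nu$ is finite or when a weight $\varrho\in L^2(\nu)$ is inserted as in \cite{delong}, which is indeed how the statement must be read outside the finite-activity setting.
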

Moreover, it is well-known that there exists a link of the solution to the FBSDE \eqref{eq:SDE}-\eqref{eq:BSDE} to the solution $u:[0,T] \times \mathbb{R}^d \to \mathbb{R}$ of the following PIDE
\begin{align}
 -u_t(t, x)-\mathscr{L} u(t, x) -f\left(t, x, u(t, x), D_x u(t, x) \sigma(x), \mathscr{J} u(t, x)\right)&=0, \quad \quad(t, x) \in[0, T) \times \mathbb{R}^d, \label{eq:PIDE1}\\
 u(T, x)&=g(x), \quad x \in \mathbb{R}^d, \label{eq:PIDE2}
\end{align}
where 
\begin{align}
\mathscr{L} u(t, x)= & \left\langle {b}(x), D_x u(t, x)\right\rangle+\frac{1}{2}\left\langle\sigma(x) D_x^2 u(t, x), \sigma(x)\right\rangle \nonumber  \\
& +\int_{\mathbb{R}^d}\left(u(t, x+\Gamma(x, z))-u(t, x)-\left\langle\Gamma(x, z), D_x u(t, x)\right\rangle\right) \nu(d z),  \label{eq:PIDE3 }\\
\mathscr{J} u(t, x)= & \int_{\mathbb{R}^d}(u(t, x+\Gamma(x, z))-u(t, x)) \nu(d z) . \nonumber 
\end{align}
From now on, by saying that a generic constant $C\in \mathbb R$ only depends on the data of the FBSDE in \eqref{eq:SDE}-\eqref{eq:BSDE}, we mean that $C$ only depends on $b, \sigma, \Gamma, f, g, {t}, T, x$ and, wether finite,  on $\nu(\mathbb{R}^d)$.

By combining Proposition 4.1.1, Lemma 4.1.1 and Theorem 4.2.1 and Lemma 4.1.1 in \cite{delong} we get the following result.
\begin{theorem} \label{theorem:LinkPIDE}
Let Assumption \ref{assump:ExistenceUniquenessFBSDE} \textbf{(A1)} hold and denote by $X^{t,x} \in \bS_{[0,T]}^2(\bR^d)$ the unique solution to the SDE \eqref{eq:SDE}-\eqref{eq:BSDE}. 
If $u \in C^{1,2}([0,T] \times \mathbb{R}^d, \mathbb{R})$ satisfies the PIDE \eqref{eq:PIDE1}-\eqref{eq:PIDE2} and there exists a constant $K>0$ such that  
\begin{align} \label{eq:EstimatesRegularityPIDE}
\vert u(t,x) \vert \leq K(1+ \vert x \vert), \quad \vert D_xu(t,x) \vert \leq K (1+ \vert x \vert), \quad \text{for } (t,x) \in [0,T] \times \mathbb{R}^d,
\end{align}
then $(Y^{t,x}, Z^{t,x}, U^{t,x})$ defined by
\begin{align}
    Y_s&=u\left(s,X_s^{t,x}\right), \quad  s \in [t,T], \label{eq:RelationY}  \\
Z_s^{t, x} & =\sigma(X_{s-}^{t, x})^{\top} D_x u(s, X_{s-}^{t, x}), \quad s \in [t,T], \nonumber  \\
U_s^{t, x}(z) & =u(s, X_{s-}^{t, x}+\Gamma(X_{s-}^{t, x}, z))-u(s, X_{s-}^{t, x}), \quad s \in [t,T], \, z \in \mathbb{R}^d,\label{eq:RelationU} 
\end{align}
is the unique solution to the BSDE \eqref{eq:BSDE}. 
Moreover, for some constant $C>0$ only depending on the data of the FBSDE in \eqref{eq:SDE}-\eqref{eq:BSDE}, $u$ satisfies
\begin{equation} \label{eq:LocallyHoelder}
\vert u(t,x)-u(t',x')\vert^2 \leq C \left( \vert x-x' \vert^2 + \left( 1+ \vert x {\vee} x' \vert^2 \right) \vert t-t' \vert \right), \quad \text{for }(t,x), (t',x') \in [0,T]\times \mathbb{R}^d.
\end{equation}
\end{theorem}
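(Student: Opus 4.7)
The plan is to establish the BSDE representation via the It\^o formula applied to $u(s, X_s^{t,x})$ together with the PIDE, and to derive the H\"older estimate \eqref{eq:LocallyHoelder} by combining $L^2$-stability of the forward SDE with a short-interval BSDE estimate.

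For the representation, since $u \in C^{1,2}$ and $X^{t,x}$ is a jump diffusion driven by $W$ and $\widetilde N$, the It\^o formula for jump semimartingales gives, for $s \in [t,T]$,
\begin{align*}
u(T, X_T^{t,x}) - u(s, X_s^{t,x}) &= \int_s^T (u_t + \mathscr L u)(r, X_{r-}^{t,x})\, \textnormal{d}r + \int_s^T D_x u(r, X_{r-}^{t,x})^\top \sigma(X_{r-}^{t,x})^\top \textnormal{d}W_r \\
&\quad + \int_s^T \int_{\bR^d} \bigl[ u(r, X_{r-}^{t,x} + \Gamma(X_{r-}^{t,x},z)) - u(r, X_{r-}^{t,x}) \bigr]\, \widetilde N(\textnormal{d}r,\textnormal{d}z),
\end{align*}
where the compensators arising in the expansion are absorbed into the nonlocal term defining $\mathscr L u$. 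Invoking the PIDE to replace $u_t + \mathscr L u$ by $-f(\cdot, u, D_x u\, \sigma, \mathscr J u)$, and using the terminal condition $u(T,\cdot) = g$ together with the definitions \eqref{eq:RelationY}--\eqref{eq:RelationU}, one recovers exactly \eqref{eq:BSDE}. The growth bounds \eqref{eq:EstimatesRegularityPIDE}, the Lipschitz assumption (A1), and standard moment estimates for $X^{t,x}$ place $(Y, Z, U)$ in the product space of the theorem, and uniqueness then follows from the BSDE well-posedness result stated just above.

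For \eqref{eq:LocallyHoelder}, use the identity $u(t, x) = Y_t^{t,x}$ a.s., obtained from \eqref{eq:RelationY} at $s = t$. For $t \leq t'$ (WLOG), decompose
\[
u(t, x) - u(t', x') = \bigl[ Y_t^{t, x} - Y_{t'}^{t, x} \bigr] + \bigl[ u(t', X_{t'}^{t, x}) - u(t', x') \bigr].
\]
The second bracket is bounded, by a BSDE-stability argument with respect to initial data applied iteratively, by $C\,\bE\bigl[|X_{t'}^{t, x} - x'|^2\bigr]^{1/2}$; combined with the standard SDE moment estimate $\bE[|X_{t'}^{t, x} - x|^2] \leq C (1+|x|^2) (t'-t)$ and a triangle inequality $|X_{t'}^{t,x} - x'| \leq |X_{t'}^{t,x} - x| + |x - x'|$, this yields the claimed form $|x - x'|^2 + (1 + |x \vee x'|^2)(t' - t)$. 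The first bracket is controlled by applying \eqref{eq:BSDE} on $[t, t']$, taking conditional expectation to eliminate the martingale parts, and using the Lipschitz/H\"older hypotheses on $f$ together with the moment bounds on $(Y, Z, U)$.

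The main obstacle is the infinite-activity jump regime: both the It\^o expansion and the pointwise estimates on $u(\cdot + \Gamma) - u$ near small jumps require a careful second-order Taylor expansion of $u$, using the $C^{1,2}$ regularity, the L\'evy integrability conditions \eqref{eq:AssumpLevyMeasure}, and Assumption (A1)(b). A clean route, essentially the one packaged into the combination of Proposition 4.1.1, Lemma 4.1.1 and Theorem 4.2.1 in Delong cited before the statement, is to first establish everything for $\nu$ truncated away from the origin and then pass to the limit using dominated convergence and the square-integrability of $|z|$ near zero.
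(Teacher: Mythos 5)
Your proposal is correct and follows essentially the same route as the paper, which does not prove this theorem itself but obtains it by citing Proposition 4.1.1, Lemma 4.1.1 and Theorem 4.2.1 of Delong's book — i.e.\ exactly the It\^o-formula verification of the BSDE representation and the standard SDE/BSDE stability estimates yielding the locally $\frac12$-H\"older bound that you sketch. Your closing remark about truncating $\nu$ near the origin and controlling the small-jump compensator via a second-order Taylor expansion is the right way to handle the only genuinely delicate point.
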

From now on, we assume that the PIDE in \eqref{eq:PIDE1}-\eqref{eq:PIDE2} has a solution $u\in C^{1,2}([0,T] \times \mathbb{R}^d, \mathbb{R})$ which satisfies \eqref{eq:EstimatesRegularityPIDE} and which is $\frac{1}{2}$-H\"older continuous with respect to time.
\begin{remark}
For examples of PIDEs which are relevant in financial applications and allow for a $C^{1,2}$-solution satisfying \eqref{eq:EstimatesRegularityPIDE} we refer to Proposition 2 in \cite{cont_voltchkova} and references therein.
\end{remark}
Theorem \ref{theorem:LinkPIDE} motivates the following stochastic optimal control problem 
\begin{align} \label{eq:StochastiOptimalControl}
&\begin{aligned}
& \underset{y \in \mathbb{R}, \,  Z \in \mathbb{H}_T^2\left(\mathbb{R}^d\right), \, U \in \mathbb{H}_{[0,T], N}^2(\mathbb{R})}{\operatorname{minimise}} \quad \mathbb{E}\left[\left(g\left(X_T^{t, x}\right)-Y_T^{y, Z, U}\right)^2\right] \nonumber
\end{aligned}\\
&\text { subject to: }\\
&\begin{cases}
X_s^{t, x}&=x+\int_t^s b(X_{r-}^{t, x}) \textnormal{d} r+\int_t^s \sigma(X_{r-}^{t, x})^{\top} \textnormal{d} W_r+\int_t^s \int_{\mathbb{R}^d} \Gamma(X_{r-}^{t, x}, z) \widetilde{N}(\textnormal{d} r, \textnormal{d} z) \nonumber \\
Y_s^{y, Z, U}&= y-\int_t^s f\left(r, X_{r-}^{t,x}, Y_{r-}^{y,Z,U}, Z_r, \int_{\mathbb{R}^d} U_r(z) \nu(\textnormal{d} z)\right) \textnormal{d} r+\int_t^s\left(Z_r\right)^{\top} \textnormal{d} W_r \\
& \quad +\int_t^s \int_{\mathbb{R}^d} U_r(z) \widetilde{N}(\textnormal{d}r,\textnormal{d}z),  \quad s \in[t, T] .
\end{cases}
\end{align}
In particular, under Assumption \ref{assump:ExistenceUniquenessFBSDE} \textbf{(A1)} the minimum in problem \eqref{eq:StochastiOptimalControl} is achieved and the corresponding minimizer $(y^*,Z^*, U^*)$ is the unique solution to the FBSDE in \eqref{eq:SDE}-\eqref{eq:BSDE}, see Lemma 2.6 in \cite{gnoatto_patacc_picarelli}. Then, Theorem \ref{theorem:LinkPIDE} provides the theoretical foundation for the algorithm  we present in the next subsection.
\subsection{The extension of the deep BSDE solver in \cite{ehanjen17} to the case with jumps}
{Using the stochastic optimal control problem in \eqref{eq:StochastiOptimalControl} the idea for the algorithm is to discretize the FBSDE and approximate at each time step the control processes by families of artificial neural networks. The goal is to minimize the loss function in \eqref{eq:StochastiOptimalControl} by fitting the parameters of the neural networks. While this approach is frequently used in the case without jumps, see e.g. \cite{ehanjen17}, the difficulty with jumps is to develop an algorithm which is also able to deal with a forward process $X$ with infinitely many jumps of finite or infinite variation. To overcome this problem the following procedure is used. \\
In a first step an algorithm for the finite activity case is developed. In a second step the forward process $X$ is approximated by a jump diffusion $X^{\epsilon}$ with finitely many jumps, which allows to make use of the already existing algorithm for the finite activity case. The approximation of the forward process relies here on results in \cite{asmussen_rosinski_2001}, \cite{cohen_rosinski_2007} and \cite{jum_2015}.
\\  Before presenting the algorithms for the finite and infinite activity case in details, we introduce the architecture of the underlying artificial neural networks we employ.}\\
{
We use feedforward ANNs with $\mathcal{L}+1 \in \mathbb{N} \backslash\{1,2\}$ layers, with each layer consisting of $v_{\ell}$ nodes or neurons, for $\ell=0, \ldots, \mathcal{L}$. The $0$-th layer is called the input layer, while the $\mathcal{L}$-th layer represents the output layer. The remaining $\mathcal{L}-1$ layers are termed hidden layers. For simplicity, we set $v_{\ell}=v, \ell=1, \ldots, \mathcal{L}-1$. The dimension of the input layer is equal to $d$, which equals the dimension of the forward process $X$.
A feedforward neural network is a function defined via the composition
$$
x \in \mathbb{R}^d \longmapsto \mathcal{A}_{\mathcal{L}} \circ \varrho \circ \mathcal{A}_{\mathcal{L}-1} \circ \ldots \circ \varrho \circ \mathcal{A}_1(x),
$$
where all $\mathcal{A}_{\ell}, \ell=1, \ldots, \mathcal{L}$, are linear transformations of the form $\mathcal{A}_{\ell}(x):=\mathcal{W}_{\ell} x+\beta_{\ell}, \ell=1, \ldots, \mathcal{L}$, where $\mathcal{W}_{\ell}$ and $\beta_{\ell}$ are matrices and vectors of suitable size called, respectively, weights and biases. The activation function $\varrho$ is a univariate function $\varrho: \mathbb{R} \mapsto \mathbb{R}$ that is applied componentwise to vectors. With an abuse of notation, we denote $\varrho\left(x_1, \ldots, x_v\right)=\left(\varrho\left(x_1\right), \ldots, \varrho\left(x_v\right)\right)$. The elements of $\mathcal{W}_{\ell}$ and $\beta_{\ell}$ are the parameters of the neural network. One can regroup all parameters in a vector of size $R:=\sum_{\ell=0}^{\mathcal{L}} v_{\ell}\left(1+v_{\ell}\right)$.} 

\section{A posteriori estimates}\label{sec:aPosterioriEstimates}

\subsection{A posteriori error estimate for the finite activity case} \label{sec:LiteratureSchemesFBSDEsJumps}
{The goal of this section is to present the algorithm  to approximate the solution of the  FBSDE in \eqref{eq:SDE}-\eqref{eq:BSDE} for the finite activity case and provide an a posteriori error estimate. Thus,} we work under the following assumption in this section.
 \begin{assumption} \label{assump:FiniteActivity}
	The L\'evy measure satisfies $K_{\nu}:=\nu(\bR^d)< \infty$. 
\end{assumption}
Furthermore, we assume that Assumption \ref{assump:ExistenceUniquenessFBSDE} \textbf{(A3)} holds from now on. For fixed $M \in \mathbb{N}$ we consider the uniform time discretization $0 \leq t = t_0<t_1<...<t_M=T$ with time step $\Delta t= t_{n+1}-t_n$ for $n=0,...,M-1$.
Next, we introduce a neural network $\cU_n^{\rho_n}: \mathbb{R}^d \to \mathbb{R}^d, n=0,...,M-1,$ parametrized by  $\rho=(\rho_0,...,\rho_{M-1}) \in (\mathbb{R}^R)^M$ and indexed by the time discretization $n \in \lbrace 0,...,M-1 \rbrace$. 
Then motivated by the stochastic optimal control problem in \eqref{eq:StochastiOptimalControl} we aim to find the family of neural networks $(\cU_n^{\rho_n})_{n=0,...,M-1}$ which minimizes the following loss function
\begin{align} \label{eq:LossFunctionAlgorithm}
\mathbb{E} \left[ \left( g(\widetilde{X}_M)-\widetilde{Y}_M^{y,\rho} \right)^2 \right],
\end{align}
where $\widetilde{X}=(\widetilde{X}_n)_{n=0,...,M-1}, \widetilde{Y}^{y, \rho}=( \widetilde{Y}^{y, \rho}_n)_{n=0,...,M-1}$ are given by 
\begin{small}
\begin{align} \label{eq:SchemeDeepSolverOneNeuralNetwork}
\begin{cases}
	\widetilde{X}_{n+1}&=\widetilde{X}_n +b(\widetilde{X}_n) \Delta t+ \sigma(\widetilde{X}_n)^{\top}\Delta W_n + \gamma(\widetilde{X}_n)\sum_{i=N([0,t_n], \mathbb{R}^d)+1}^{N([0,t_{n+1}], \mathbb{R}^d)}  z_i - \Delta t \int_{\mathbb{R}^d} \gamma(\widetilde{X}_n) z \nu (\textnormal{d}z), \\
	\widetilde{X}_0&=x,\\
	\widetilde{Y}_{n+1}^{y,\rho}&= \widetilde{Y}_n^{y, \rho}- f\left( t_n, \widetilde{X}_n, \widetilde{Y}_n^{y, \rho}, \sigma(\widetilde{X}_n)^{\top} D_x \mathcal{U}_{n}^{\rho_n}(\widetilde{X}_n), \int_{\mathbb{R}^d}\left(\cU_n^{\rho_n}(\widetilde{X}_n +\gamma(\widetilde{X}_n)z)- \cU_n^{\rho_n}(\widetilde{X}_n) \right)\nu(\textnormal{d} z) \right) \Delta t \\
	& \quad +(D_x \mathcal{U}_{n}^{\rho_n}(\widetilde{X}_n))^{\top}\sigma(\widetilde{X}_n) \Delta W_n + \left( \cU_n^{\rho_n}\left(\widetilde{X}_n + \gamma(\widetilde{X}_n)\sum_{i=N([0,t_n], \mathbb{R}^d)+1}^{N([0,t_{n+1}], \mathbb{R}^d)} z_i \right)- \cU_n^{\rho_n}(\widetilde{X}_n) \right)  \\
	& \quad -\Delta t \int_{\mathbb{R}^d} \left(\cU_n^{\rho_n}(\widetilde{X}_n + \gamma(\widetilde{X}_n)z)- \cU_n^{\rho_n}(\widetilde{X}_n) \right)\nu( \textnormal{d}z),  \\
	\widetilde{Y}_0^{y,\rho}&=\cU_0^{\rho_0}=:y \in \mathbb{{R}},
\end{cases}	
\end{align}
\end{small}
for $n=1,...,M-1$. Here, $N([0,t_n],\mathbb{R}^d)$ denotes the number of $\mathbb{R}^d$-valued jumps, which occur over the time interval $[0,t_n]$ and  $D_x \mathcal{U}_{n}^{\rho_n}$ is the gradient of $\cU_n^{\rho_n}$ that can be computed by means of the \texttt{Tensorflow} automatic differentiation capabilities.

\begin{remark}

\begin{enumerate}
\item 
A computationally efficient variant of the algorithm above is proposed in \cite{gnoatto_patacc_picarelli}. The presented minimization problem in \eqref{eq:LossFunctionAlgorithm} and \eqref{eq:SchemeDeepSolverOneNeuralNetwork} differs from the problem studied in \cite{gnoatto_patacc_picarelli} in the following way. While we consider one family of neural networks which approximates the stochastic integral with respect to the Brownian motion and with respect to the compensated random measure, the approach in \cite{gnoatto_patacc_picarelli} introduces a first family of ANNs  to approximate the gradient $D_x u$, a second one that approximates $\Delta u(t_n,\widetilde{X}_n, \Delta \widetilde{X}_n)$ with 
\begin{align*}
\Delta \widetilde{X}_n&= \gamma(\widetilde{X}_n)\sum_{i=N([0,t_n], \mathbb{R}^d)+1}^{N([0,t_{n+1}], \mathbb{R}^d)} z_i, \nonumber \\
\Delta u(t_n,\widetilde{X}_n, \Delta \widetilde{X}_n)&=u(t_n, \widetilde{X}_n + \Delta  \widetilde{X}_n)-u(t_n, \widetilde{X}_n),
\end{align*}
and a third one  that approximates
\begin{align}  \label{eq:CompensatorTerm}
\int_{\mathbb{R}^d} u(t_n, \widetilde{X}_n+\gamma(\widetilde{X}_n) z)-u(t_n, \widetilde{X}_n) \nu(\textnormal{d} z).
\end{align}

For the purpose of deriving error estimates we consider only one family of neural networks to keep the representation of the estimates simpler, as training two further families of neural networks requires the introduction of additional penalization terms in the loss function.

\item In \cite{ehanjen17} and \cite{gnoatto_patacc_picarelli}, instead of computing the gradient of $\mathcal{U}_n^{\rho_n}$ by using  {\fontfamily{qcr}\selectfont Tensorflow}, another family of neural networks $(\mathcal{V}_n^{\theta_n})_{n=0,...,M-1}$ is introduced to approximate this gradient $D_x u$. Under the growth condition 
$$
\big \vert  \mathcal{V}_n^{\theta_n} (x) \big \vert^2\leq C_n,
$$
for a constant $C_n\geq 0$, for $n=0,\ldots, M-1$ (see Assumption \ref{assump:NeuralNetwork} below), the error estimates derived in the following still hold true.
\end{enumerate}
\end{remark}

We assume from now on that the SDE in \eqref{eq:SDE} starts (in the fixed point $x \in \mathbb{R}^d$) at time $t=0$ and to simplify the notation we denote the solution of the FBSDE in \eqref{eq:SDE}-\eqref{eq:BSDE} by $(X,Y,Z,U)$. Moreover, given this solution we define the process $\Psi=(\Psi_t)_{t \in [0,T]}$ by 
\begin{equation} \label{eq:Integral}
\Psi_t:=\int_{\bR^d} U_t(z)\nu(\textnormal{d}z), \quad t \in [0,T]. 
\end{equation}
With this notation we rewrite the scheme in \eqref{eq:SchemeDeepSolverOneNeuralNetwork} as follows 
\begin{align} \label{eq:SchemeDeepSolverOneNeuralNetworkRewritten1}
\begin{cases}
	\widetilde{X}_{n+1}&=\widetilde{X}_n +b(\widetilde{X}_n) \Delta t+ \sigma(\widetilde{X}_n)^{\top}\Delta W_n + \int_{t_n}^{t_{n+1}}\int_{\mathbb{R}^d} \gamma(\widetilde{X}_n) z \widetilde{N}(\textnormal{d}r,\textnormal{d}z), \\
	\widetilde{X}_0&=x,\\
	\widetilde{Z}_n^{  {\rho}}&= \sigma(\widetilde{X}_n)^{\top} D_x \mathcal{U}_{n}^{\rho_n}(\widetilde{X}_n),\\
	\widetilde{\Psi}_n^{  {\rho}}&= \int_{\bR^d}\left(\cU_n^{\rho_n}(\widetilde{X}_n +\gamma(\widetilde{X}_n) z )- \cU_n^{\rho_n}(\widetilde{X}_n)\right) \nu(\textnormal{d}z) \\
	\widetilde{Y}_{n+1}^{y,\rho}&= \widetilde{Y}_n^{y, \rho}- f\left( t_n, \widetilde{X}_n, \widetilde{Y}_n^{y, \rho}, \widetilde{Z}_n, \widetilde{\Psi}_n \right) \Delta t  +\widetilde{Z}_n^{\top} \Delta W_n   \\
	&\quad + \left( \cU_n^{\rho_n}\left(\widetilde{X}_n + \gamma(\widetilde{X}_n) \sum_{i=N([0,t_n], \mathbb{R}^d)+1}^{N([0,t_{n+1}], \mathbb{R}^d)}z_i \right)- \cU_n^{\rho_n}(\widetilde{X}_n) \right)	 - \int_{t_n}^{t_{n+1}} \widetilde{\Psi}_n \textnormal{d}t,  \\
	\widetilde{Y}_0^{y,\rho}&=y
\end{cases}	
\end{align}
for $n=0,...,M-1$ and {by suppressing the dependence on the neural network {parameter} $\rho$ and the starting value $y$ {to ease the notation,} }we denote its solution by $(\widetilde{X}_n, \widetilde{Y}_n, \widetilde{Z}_n, \widetilde{\Psi}_n)_{n =0,...,M}$. In particular, we set here $\widetilde{Z}_M=\widetilde{\Psi}_M:=0$ to simplify the notation.

As we are interested in $L^2$-error estimates, the solution $(\widetilde{X}_n, \widetilde{Y}_n, \widetilde{Z}_n, \widetilde{\Psi}_n)_{n =0,...,M}$ of \eqref{eq:SchemeDeepSolverOneNeuralNetworkRewritten1} needs to be square-integrable. To guarantee this, we add assumptions for the classes of families of neural networks we consider.
\begin{assumption} \label{assump:NeuralNetwork}  
We assume that for each $n=0,...,M-1$ the function $\mathcal{U}_n^{\rho_n}: \bR^d \to \bR^d$ is measurable and satisfies the following growth condition
\begin{equation}\label{eq:AssumptionNeuralNetworkGrowth}
	\big \vert \mathcal{U}_n^{\rho_n} (x) \big \vert^2\leq A_n + B_n \vert x \vert^2
\end{equation} 
for constants $A_n,B_n {\geq 0}$. 
Moreover, for each $n=0,...,M-1$ the function $\mathcal{U}_n^{\rho_n}: \bR^d \to \bR^d$ is differentiable and the gradient $D_x \mathcal{U}_{n}^{\rho_n}(x)$ satisfies the following growth condition
\begin{equation}\label{eq:AssumptionNeuralNetworkGrowthGradient}
	\big \vert D_x \mathcal{U}_n^{\rho_n} (x) \big \vert^2\leq C_n
\end{equation} 
for a constant $C_n  {\geq 0}$.
\end{assumption}
\begin{remark}
Note that Assumption \ref{assump:NeuralNetwork} is satisfied for {families of} neural networks defined by means of the sigmoid activation function.
\end{remark}
{In the next lemma we make use of the following estimates, which we summarize briefly.}
{
\begin{remark} \label{remark:Gamma}
Note that if the function $\Gamma: \bR^d \times \bR^d \to \bR^d$ satisfies Assumption \ref{assump:ExistenceUniquenessFBSDE} \textbf{(A3)} the following estimates hold for $x \in \mathbb{R}^d$
\begin{align*}
    \vert \gamma(x) \vert &\leq \vert \gamma (0) \vert + K_{\gamma} \vert x \vert \leq \widetilde{C}_{\gamma}(1+ \vert x \vert),   \nonumber \\
    \vert \gamma(x) \vert^2 &\leq 2 \vert \gamma (0) \vert^2 +2 K_{\gamma}^2 \vert x \vert^2\leq  C_{\gamma}(1+ \vert x \vert^2)
\end{align*}
with $\widetilde{C}_{\gamma}:=2 ( \vert \gamma (0) \vert \vee K_{\gamma}) $ and $C_{\gamma}:=2 ( \vert \gamma (0) \vert^2 \vee K_{\gamma}^2).$ {Note that similar estimates also hold for the Lipschitz continuous functions $\sigma$ and $b$. }
\end{remark}}

\begin{lemma} \label{lemma:SchemeAdaptedIntegrable}
Let Assumptions \ref{assump:ExistenceUniquenessFBSDE} \textbf{(A1)}, \textbf{(A3)}, \ref{assump:FiniteActivity} and \ref{assump:NeuralNetwork} hold. Then the solution \linebreak $(\widetilde{X}_n, \widetilde{Y}_n, \widetilde{Z}_n, \widetilde{\Psi}_n)_{n =0,...,\violet{M}}$ of \eqref{eq:SchemeDeepSolverOneNeuralNetworkRewritten1} is $(\mathcal{F}_n)_{  {n=0,...,M}}$-adapted and square-integrable.
\end{lemma}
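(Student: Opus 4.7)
The argument is a routine induction on the time index $n$, carried out separately for $(\mathcal F_{t_n})$-adaptedness and for $L^2$-integrability, with common base case $\widetilde X_0=x$, $\widetilde Y_0=y$. For adaptedness, assuming $\widetilde X_n,\widetilde Y_n\in\mathcal F_{t_n}$, both $\widetilde Z_n$ and $\widetilde\Psi_n$ are deterministic (measurable) functions of $\widetilde X_n$, by virtue of measurability of $\sigma$, $\gamma$, $\mathcal U_n^{\rho_n}$ and $D_x\mathcal U_n^{\rho_n}$, and hence also $\mathcal F_{t_n}$-measurable. The driving terms added between $t_n$ and $t_{n+1}$ -- namely $\Delta W_n$, the compensated integral $\int_{t_n}^{t_{n+1}}\!\int\gamma(\widetilde X_n)z\,\widetilde N(dr,dz)$, and the compound jump $J_n:=\sum_{i=N([0,t_n],\mathbb R^d)+1}^{N([0,t_{n+1}],\mathbb R^d)}z_i$ -- are all $\mathcal F_{t_{n+1}}$-measurable, so \eqref{eq:SchemeDeepSolverOneNeuralNetworkRewritten1} gives $\widetilde X_{n+1},\widetilde Y_{n+1}\in\mathcal F_{t_{n+1}}$.

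For the $L^2$ step, under the stronger induction hypothesis $\widetilde X_n,\widetilde Y_n\in L^2$ I would show in turn that $\widetilde Z_n$, $\widetilde\Psi_n$, $\widetilde X_{n+1}$, $\widetilde Y_{n+1}$ all lie in $L^2$. The bound on $\widetilde Z_n$ is immediate from $|D_x\mathcal U_n^{\rho_n}|^2\leq C_n$ (Assumption \ref{assump:NeuralNetwork}) together with the Lipschitz growth $|\sigma(x)|^2\leq C_\sigma(1+|x|^2)$. For $\widetilde\Psi_n$ I would apply Jensen's inequality relative to the finite measure $\nu$ and then use \eqref{eq:AssumptionNeuralNetworkGrowth} together with the Lipschitz growth of $\gamma$ and the fact that Assumption \ref{assump:FiniteActivity} combined with \eqref{eq:AssumpLevyMeasure} implies $\int_{\mathbb R^d}|z|^2\nu(dz)<\infty$, producing a bound of the form $C(1+|\widetilde X_n|^2)$. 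The square-integrability of $\widetilde X_{n+1}$ is the standard Euler-type estimate: expand the square, apply the It\^o and compensated-Poisson $L^2$-isometries, and use Lipschitz growth of $b,\sigma,\gamma$.

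The main obstacle, though still manageable, is the nonlinear jump contribution $\mathcal U_n^{\rho_n}(\widetilde X_n+\gamma(\widetilde X_n)J_n)-\mathcal U_n^{\rho_n}(\widetilde X_n)$ entering the recursion for $\widetilde Y_{n+1}$. For this I would first check $\mathbb E[|J_n|^2]<\infty$ by decomposing $J_n=\int_{t_n}^{t_{n+1}}\!\int z\,\widetilde N(dr,dz)+\Delta t\int z\,\nu(dz)$ and using the $L^2$-isometry together with finite activity and \eqref{eq:AssumpLevyMeasure}. The crucial point is the independence of $J_n$ from $\mathcal F_{t_n}$, which allows one to condition on $\widetilde X_n$ and then apply the quadratic growth bound \eqref{eq:AssumptionNeuralNetworkGrowth} to conclude $\mathbb E[|\mathcal U_n^{\rho_n}(\widetilde X_n+\gamma(\widetilde X_n)J_n)|^2]\leq A_n+B_n\,\mathbb E[|\widetilde X_n+\gamma(\widetilde X_n)J_n|^2]<\infty$. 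The drift piece in \eqref{eq:SchemeDeepSolverOneNeuralNetworkRewritten1} is controlled via the Lipschitz hypothesis on $f$ from Assumption \ref{assump:ExistenceUniquenessFBSDE} \textbf{(A1)}, which yields $|f(t_n,\widetilde X_n,\widetilde Y_n,\widetilde Z_n,\widetilde\Psi_n)|^2\leq C(1+|\widetilde X_n|^2+|\widetilde Y_n|^2+|\widetilde Z_n|^2+|\widetilde\Psi_n|^2)$, all quantities already under control. Summing all contributions closes the induction.
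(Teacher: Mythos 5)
Your proof is correct and follows essentially the same route as the paper's: induction on $n$, the growth conditions of Assumption \ref{assump:NeuralNetwork} for $\widetilde{Z}_n$ and $\widetilde{\Psi}_n$, Jensen (equivalently Cauchy--Schwarz) with respect to the finite measure $\nu$, and the $L^2$-isometry of the compensated Poisson integral for the forward recursion. The only point where you genuinely diverge is the jump contribution in the $\widetilde{Y}$-recursion: the paper replaces $\cU_n^{\rho_n}\bigl(\widetilde{X}_n+\gamma(\widetilde{X}_n)\sum_i z_i\bigr)-\cU_n^{\rho_n}(\widetilde{X}_n)$ by the stochastic integral $\int_{t_n}^{t_{n+1}}\int_{\bR^d}\bigl(\cU_n^{\rho_n}(\widetilde{X}_n+\gamma(\widetilde{X}_n)z)-\cU_n^{\rho_n}(\widetilde{X}_n)\bigr)\widetilde{N}(\textnormal{d}r,\textnormal{d}z)$ and bounds it via the isometry after checking the integrand lies in $\bH^2_{[t_n,t_{n+1}],N}$, whereas you keep the term as a function of the aggregated jump $J_n$ and exploit the independence of $J_n$ from $\cF_{t_n}$ together with $\bE[|J_n|^2]<\infty$ and the quadratic growth of $\cU_n^{\rho_n}$. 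Both arguments are valid; yours has the minor advantage of treating the term exactly as it is written in \eqref{eq:SchemeDeepSolverOneNeuralNetworkRewritten1} (where $\cU_n^{\rho_n}$ is applied to the total jump increment rather than summed over individual jumps), while the paper's isometry argument is the one that is reused later in Lemma \ref{lemma:SatisfiesScheme}.
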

\begin{proof}
The adaptedness follows straightforward by induction. 
We prove the square-integrability of $(\widetilde{X}_n)_{n=0,...,M}$ by induction. It is clear that $\widetilde{X}_0:=x$ is square-integrable. Thus, for $n >0$ we have 
\begin{align} 
	\bE\left[ \big  \vert \widetilde{X}_{n+1} \big \vert ^2\right]&\leq 4 \left(  \bE\left[ \big \vert \widetilde{X}_n \big \vert ^2\right] + \bE \left[ \big \vert b(\widetilde{X}_n) \Delta t \big \vert^2 \right] + \bE \left[ \Big \vert \int_{t_n}^{t_{n+1}}\int_{\mathbb{R}^d} \gamma(\widetilde{X}_n) z \widetilde{N}(\textnormal{d}r,\textnormal{d}z) \Big \vert^2  \right] \right).\label{eq:SquareIntegrabilityX}
\end{align}
By Assumption \ref{assump:ExistenceUniquenessFBSDE} \textbf{(A3)} and Assumption \ref{assump:FiniteActivity} itself and its implication $\int_{\bR^d}\big \vert  z\big \vert^2\nu(dz)<\infty$, {by Remark \ref{remark:Gamma}, the fact that  $\gamma(\widetilde{X}_n) z$ as a constant process on $[t_n, t_{n+1}]$ is an element in $\bH_{[t_n,t_{n+1}],N}^2(\bR^d)$ and {the} induction hypothesis it follows}
\allowdisplaybreaks
\begin{align}\bE \left[ \Big \vert \int_{t_n}^{t_{n+1}}\int_{\mathbb{R}^d} \gamma(\widetilde{X}_n) z \widetilde{N}(\textnormal{d}r,\textnormal{d}z) \Big \vert^2  \right] 
&= \bE \left[  \int_{t_n}^{t_{n+1}}\int_{\mathbb{R}^d} \vert \gamma(\widetilde{X}_n) z\vert ^2 \nu(\textnormal{d}z)\textnormal{d}r  \right]  \nonumber \\
&= \Delta t \bE\left[\big \vert   \gamma(\widetilde{X}_n) \big \vert^2 \right] \int_{\bR^d}\big \vert    z\big \vert^2\nu(dz)\nonumber \\
&{\leq  \Delta t C_{\gamma} \left( 1+ \mathbb{E}\left[ \big \vert \widetilde{X}_n \big \vert^2\right]\right) \int_{\bR^d}\big \vert    z\big \vert^2\nu(dz)} \nonumber \\
&<\infty. \label{eq:SquareIntegrabilityX1}
\end{align}
Again by the induction assumption, Remark \ref{remark:Gamma} and \eqref{eq:SquareIntegrabilityX}, \eqref{eq:SquareIntegrabilityX1} we conclude
$$
\mathbb{E}\left[ \big \vert \widetilde{X}_{n+1} \big\vert^2 \right]< \infty.
$$
Obviously, $\mathbb{E}\left[\vert\widetilde{Z}_M \vert^2 \right]=\mathbb{E}\left[\vert\widetilde{\Psi}_M \vert^2 \right]=0$. For $n=0,...,M-1$, we get
\begin{align}
\bE\left[ \vert \widetilde{Z}_n \vert^2 \right]	&= \bE \left[ \vert \sigma(\widetilde{X}_n)^{\top} D_x \cU_n^{\rho_n}(\widetilde{X}_n)\vert^2\right]\nonumber \\
&\leq \bE \left[ \big \vert \sigma(\widetilde{X}_n)\big \vert^2 C_n\right]\nonumber\\
& \leq 2 C_n \left( \vert \sigma(0) \vert^2 \vee K_{\sigma}^2\right) \left( 1 + \mathbb{E}\left[ \big \vert \widetilde{X}_n \big \vert^2 \right]\right)   \nonumber \\
&< \infty,
\end{align}
by using Assumption \ref{assump:NeuralNetwork}, the Lipschitz continuity of $\sigma$ and the square-integrability of $\widetilde{X}_n$. 
Furthermore, it holds by Assumption \ref{assump:ExistenceUniquenessFBSDE} \textbf{(A3)},  \ref{assump:FiniteActivity}, \ref{assump:NeuralNetwork}, { Jensen's inequality for $\sigma$-finite measures, see Corollary 23.10 in \cite{schilling_book_2005}, Remark \ref{remark:Gamma}} and the square-integrability of $\widetilde{X}_n$
\allowdisplaybreaks
\begin{align}
\bE \left[ \vert \widetilde{\Psi}_n \vert^2 \right]	&= \bE \left[ \left\vert \int_{\bR^d}\left(\cU_n^{\rho_n}(\widetilde{X}_n +\gamma(\widetilde{X}_n) z )- \cU_n^{\rho_n}(\widetilde{X}_n)\right) \nu(\textnormal{d}z)\right \vert^2 \right] \nonumber \\
& {\leq 2 \bE \left[  \int_{\bR^d}|\cU_n^{\rho_n}(\widetilde{X}_n+\gamma(\widetilde{X}_n) z )|^2  \nu(\textnormal{d}z) \right] +2\bE \left[  \int_{\bR^d}|\cU_n^{\rho_n}(\widetilde{X}_n)|^2 \nu(\textnormal{d}z) \right]} \nonumber \\
& {\leq 2 \bE \left[  \int_{\bR^d} \left(A_n + B_n\left(\widetilde{X}_n+\gamma(\widetilde{X}_n) z \right)^2\right)  \nu(\textnormal{d}z) \right] +2\bE \left[  \int_{\bR^d}\left(A_n + B_n  \vert \widetilde{X}_n \vert^2 \right) \nu(\textnormal{d}z) \right]} \nonumber \\
& {= 2 K_{\nu}\left( 2 A_n + 3 B_n  \mathbb{E}\left[ \big \vert \widetilde{X}_n \big \vert^2 \right] \right) + 4B_n C_{\gamma} \left(1+ \bE\left[ \big \vert \widetilde{X}_n \big \vert^2 \right]\right) \int_{\mathbb{R}^d} \vert z \vert^2 \nu(\textnormal{d}z) } \nonumber \\
&< \infty,\nonumber 
\end{align}
for $n=0,...,M-1$. 
Moreover, note that $\mathbb{E}\left[\vert \widetilde{Y}_0\vert^2\right]=\mathbb{E}\left[\vert y\vert^2\right]<\infty$ and for $n=1,...,M-1$ we have
\begin{align}
\bE \left[\big \vert \widetilde{Y}_{n+1} \big\vert^2\right]&\leq 4\bE \left[\big \vert \widetilde{Y}_{n} \big\vert^2\right] +4\bE \left[\big \vert \- f\left( t_n, \widetilde{X}_n, \widetilde{Y}_n, \widetilde{Z}_n, \widetilde{\Psi}_n \right) \Delta t \big \vert^2 \right]  +4\bE\left[\big \vert \widetilde{Z}_n^{\top} \Delta W_n \big \vert^2\right]\nonumber \\
& \quad +4 \bE \left[\bigg \vert \int_{t_n}^{t_{n+1}} \int_{\bR^d}  \left( \cU_n^{\rho_n}(\widetilde{X}_n + \gamma(\widetilde{X}_n) z)- \cU_n^{\rho_n}(\widetilde{X}_n) \right) \widetilde{N}(\textnormal{d}r, \textnormal{d}z) \bigg \vert ^2 \right], \label{eq:SquareIntegrabilityY}
\end{align}
where the first three terms in \eqref{eq:SquareIntegrabilityY} are {finite} by using the Lipschitz continuity of $f$ and the square-integrability of $\widetilde{X}_n,\widetilde{Y}_n, \widetilde{Z}_{  {n}}, \widetilde{\Psi}_{  {n}}$. Thus, we only consider the last term. In particular, we show that $\cU_n^{\rho_n}(\widetilde{X}_n + \gamma(\widetilde{X}_n) z)- \cU_n^{\rho_n}(\widetilde{X}_n)$, as a constant process on $[t_n, t_{n+1}]$ is an element in $\bH_{[t_n,t_n+1],N}^2(\bR^d)$. We have 
\allowdisplaybreaks
\begin{align*} 
&\bE \left[ \int_{t_n}^{t_{n+1}} \int_{\bR^d}  \big \vert \cU_n^{\rho_n}(\widetilde{X}_n + \gamma(\widetilde{X}_n) z)- \cU_n^{\rho_n}(\widetilde{X}_n) \big\vert^2 \nu(\textnormal{d}z) \textnormal{d}r  \right] \nonumber \\
&\leq \Delta t \ \bE \left[  \int_{\bR^d} \left( \big \vert \cU_n^{\rho_n}(\widetilde{X}_n + \gamma(\widetilde{X}_n) z) \big \vert^2 + \big \vert \cU_n^{\rho_n}(\widetilde{X}_n) \big\vert^2 +2 \big \vert \cU_n^{\rho_n}(\widetilde{X}_n + \gamma(\widetilde{X}_n) z) {\big\vert \big \vert} \cU_n^{\rho_n}(\widetilde{X}_n)  \big \vert \right) \nu(\textnormal{d}z)   \right] \nonumber \\
& \leq {\Delta t K_{\nu} \left( 2A_n + 2 B_n \mathbb{E}\left[ \big \vert \widetilde{X}_n \big \vert^2  \right]+ B_n C_{\gamma}\left( 1+  \mathbb{E}\left[ \big \vert \widetilde{X}_n \big \vert^2 \right]\right) \int_{\mathbb{R}^d} \vert z \vert^2 \nu(\textnormal{d}z) \right) } \nonumber \\
& \quad {+ \Delta t \ \bE \left[  \int_{\bR^d} 2 \sqrt{A_n + B_n  \vert \widetilde{X}_n \vert^2 } \sqrt{A_n + B_n \left( \vert \widetilde{X}_n \vert^2 + 2  \vert \gamma(\widetilde{X}_n)\vert \vert z \vert \vert \widetilde{X}_n \vert + \vert \gamma(\widetilde{X}_n)\vert^2 \vert z \vert^2 \right) }  \nu(\textnormal{d}z)   \right]} \nonumber \\
&  \leq {\Delta t K_{\nu} \Bigg( 4A_n + 4 B_n \mathbb{E}\left[ \big \vert \widetilde{X}_n \big \vert^2 \right]+ 3B_n C_{\gamma}\left( 1+  \mathbb{E}\left[ \big \vert \widetilde{X}_n \big \vert^2 \right]\right) \int_{\mathbb{R}^d} \vert z \vert^2 \nu(\textnormal{d}z) } \nonumber \\
& \quad {+ B_n\widetilde{C}_{\gamma} \mathbb{E}\left[ \vert \widetilde{X}_n\vert (1+ \vert \widetilde{X}_n\vert) \right] \int_{\mathbb{R}^d} \vert z \vert \nu(\textnormal{d}z) \Bigg) }\nonumber \\
&< \infty 
\end{align*}
by Assumption \ref{assump:ExistenceUniquenessFBSDE} \textbf{(A3)}, \ref{assump:FiniteActivity} and \ref{assump:NeuralNetwork}.
 Thus, it follows 
\begin{align}
	&\bE \left[\bigg \vert \int_{t_n}^{t_{n+1}} \int_{\bR^d}  \left( \cU_n^{\rho_n}(\widetilde{X}_n + \gamma(\widetilde{X}_n) z)- \cU_n^{\rho_n}(\widetilde{X}_n) \right) \widetilde{N}(\textnormal{d}r, \textnormal{d}z) \bigg \vert ^2 \right]\nonumber \\
	&=\bE \left[ \int_{t_n}^{t_{n+1}} \int_{\bR^d}  \left| \cU_n^{\rho_n}(\widetilde{X}_n + \gamma(\widetilde{X}_n) z)- \cU_n^{\rho_n}(\widetilde{X}_n) \right|^2 \nu(\textnormal{d}z)\textnormal{d}r  \right]	<\infty.
	\end{align}
\end{proof}

{To derive an a posteriori error estimate for the solution of the FBSDE we follow a similar approach as in \cite{Han_Long_2020}, where the case of coupled FBSDEs without jumps is studied.
Our goal is to find an upper bound for the following expression
\begin{align} \label{eq:ErrorEstimateFBSDEGoal1}
 &\max_{n=0,1,...,M-1} \mathbb{E} \left[ \sup_{t \in [t_n,t_{n+1}]} \big \vert X_t - \widetilde{X}_n \big \vert^2 \right]+ \max_{n=0,1,...,M-1} \mathbb{E} \left[ \sup_{t \in [t_n,t_{n+1}]} \big \vert Y_t - \widetilde{Y}_n \big \vert^2 \right] \nonumber \\
 & \quad  + \mathbb{E} \left[ \sum_{n=0}^{M-1}\int_{t_n}^{t_{n+1}} \big \vert Z_t - \widetilde{Z}_n \big \vert^2 \textnormal{d}t \right]+  \mathbb{E} \left[ \sum_{n=0}^{M-1}\int_{t_n}^{t_{n+1}}\big \vert \Psi_t - \widetilde{\Psi}_n \big \vert^2 \textnormal{d}t \right], \end{align}
 where $(\widetilde{X}_n, \widetilde{Y}_n,\widetilde{Z}_n,\widetilde{\Psi}_n)_{n=0,...,  {M}}$ is defined in \eqref{eq:SchemeDeepSolverOneNeuralNetworkRewritten1} and $(X,Y,Z,U)$ is the solution of the FBSDE in \eqref{eq:SDE}-\eqref{eq:BSDE} and $\Psi$ is given by \eqref{eq:Integral}. \\
To do so, we make use of an error estimate for a discretization scheme for the FBSDE in \eqref{eq:SDE}-\eqref{eq:BSDE}, which is introduced in \cite{bouchard_elie_2008}.}
Given a regular grid $0=t_0<t_1<...<t_M=T$ with step size $\Delta t$ we approximate $(X,Y,Z,\Psi)$ by $(\overline{X}_n, \overline{Y}_n,\overline{Z}_n,\overline{\Psi}_n)_{n=0,...,M}$ defined by
 \begin{align} \label{eq:SchemeDelong}
 \begin{cases}
 	\overline{X}_{n+1}&=\overline{X}_n +b(\overline{X}_n) \Delta t+ \sigma(\overline{X}_n)^{\top}\Delta W_n + \int_{t_n}^{t_{n+1}}\int_{\mathbb{R}^d} \gamma(\overline{X}_n)z\widetilde{N}( \textnormal{d}r, \textnormal{d}z), \\
	\overline{X}_0&=x,\\
\overline{Y}_{M}&=g(\overline{X}_{M}), \\
	\overline{Z}_n&= \frac{1}{\Delta t} \mathbb{E} \left[ \overline{Y}_{n+1} \Delta W_n \vert \mathcal{F}_n \right], \\
	\overline{\Psi}_n&= \frac{1}{\Delta t} \mathbb{E} \left[ \overline{Y}_{n+1} \int_{t_n}^{t_{n+1}}\int_{\mathbb{R}^d} \widetilde{N}(\textnormal{d}r,\textnormal{d}z) \vert \mathcal{F}_n \right], \\
	\overline{Y}_n&= \mathbb{E}\left[\overline{Y}_{n+1} \vert \mathcal{F}_n \right]+f\left( t_n, \overline{X}_n, \overline{Y}_n, \overline{Z}_n, \overline{\Psi}_n\right) \Delta t,
	\end{cases}
 \end{align}
for $n=0,...,M-1$
with $\mathcal{F}_n:=\mathcal{F}_{t_n}$ for $n=0,...,M$ and $\overline{Z}_M=\overline{\Psi}_M:=0.$ 
For this scheme the following error estimate holds, see Theorem 5.1.1 in \cite{delong} and Corollary 2.1 and   {Remark 2.7} in \cite{bouchard_elie_2008}.

\begin{theorem} \label{theorem:ErrorEstimateSchemeDelong}
Let Assumptions \ref{assump:ExistenceUniquenessFBSDE} \textbf{(A1)}, \textbf{(A2)} and \ref{assump:FiniteActivity} hold. Let $(X,Y,Z,U)$ be the solution of the FBSDE in \eqref{eq:SDE}-\eqref{eq:BSDE} and $(\overline{X}_n, \overline{Y}_n,\overline{Z}_n,\overline{\Psi}_n)_{n=0,...,M}$ be given in \eqref{eq:SchemeDelong}. Then 
\begin{align} \label{eq:ErrorEstimateFBSDEWithJumps2}
 &\max_{n=0,1,...,M-1} \mathbb{E} \left[ \sup_{t \in [t_n,t_{n+1}]} \big \vert X_t - \overline{X}_n \big \vert^2 \right]+ \max_{n=0,1,...,M-1} \mathbb{E} \left[ \sup_{t \in [t_n,t_{n+1}]} \big \vert Y_t - \overline{Y}_n \big \vert^2 \right] \nonumber \\
 & \quad  + \mathbb{E} \left[ \sum_{n=0}^{M-1}\int_{t_n}^{t_{n+1}} \big \vert Z_t - \overline{Z}_n \big \vert^2 \textnormal{d}t \right]+  \mathbb{E} \left[ \sum_{n=0}^{M-1}\int_{t_n}^{t_{n+1}}\big \vert \Psi_t - \overline{\Psi}_n \big \vert^2 \textnormal{d}t \right] \leq C \Delta t,
 \end{align}
 where $C$ is a constant {only} depending on the data of the FBSDE in \eqref{eq:SDE}-\eqref{eq:BSDE}.
\end{theorem}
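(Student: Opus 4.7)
Since this is essentially a citation-level result for a standard Euler-type discretization of decoupled FBSDEs with finite-activity jumps, my plan splits into three blocks: a forward SDE error, a backward error via martingale representation and $L^{2}$-projection, and the key technical ingredient of path regularity for $Z$ and $\Psi$; the three are glued together by a backward discrete Gronwall argument. As a preliminary observation, I would first note that the implicit equation for $\overline{Y}_{n}$ in the last line of \eqref{eq:SchemeDelong} is well-posed as soon as $\Delta t$ is small enough that $K_{f}\Delta t<1$, because the map $y\mapsto \mathbb{E}[\overline{Y}_{n+1}\mid\mathcal{F}_{n}]+f(t_{n},\overline{X}_{n},y,\overline{Z}_{n},\overline{\Psi}_{n})\Delta t$ is then a contraction in $\bL^{2}_{\mathcal{F}_{n}}(\bR)$.

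\textbf{Forward error.} Under Assumption \ref{assump:ExistenceUniquenessFBSDE}\textbf{(A1)}, \textbf{(A3)} and Assumption \ref{assump:FiniteActivity}, the forward scheme for $\overline{X}$ is classical Euler-Maruyama with compensated jumps. I would subtract \eqref{eq:SDE} from its discrete counterpart on $[t_{n},t_{n+1}]$, take $\sup$ over the subinterval, apply Burkholder-Davis-Gundy to the Brownian and the compensated Poisson martingale parts, Cauchy-Schwarz to the drift, and use Lipschitz continuity of $b,\sigma,\gamma$ together with $\int_{\bR^{d}}|z|^{2}\nu(dz)<\infty$ and $K_{\nu}<\infty$. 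An auxiliary $L^{2}$-time continuity estimate $\mathbb{E}|X_{s}-X_{t_{n}}|^{2}\le C\Delta t$ for $s\in[t_{n},t_{n+1}]$ lets me pass from the piecewise-constant scheme to the continuous trajectory. A discrete Gronwall iteration then gives the first term in \eqref{eq:ErrorEstimateFBSDEWithJumps2} of order $\Delta t$.

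\textbf{Backward error.} On each subinterval I write the continuous BSDE as
$$Y_{t_{n}}=Y_{t_{n+1}}+\int_{t_{n}}^{t_{n+1}}\!f_{s}\,ds-\int_{t_{n}}^{t_{n+1}}\!Z_{s}^{\top}dW_{s}-\int_{t_{n}}^{t_{n+1}}\!\!\int_{\bR^{d}}U_{s}(z)\widetilde{N}(ds,dz),$$
and compare line by line with \eqref{eq:SchemeDelong}. By construction $\overline{Z}_{n}$ and $\overline{\Psi}_{n}$ are the $\mathcal{F}_{t_{n}}$-measurable $L^{2}$-best constant approximations of $Z$ on $[t_{n},t_{n+1}]$ and of $\int_{\bR^{d}}U(\cdot,z)\widetilde N(ds,dz)$ respectively. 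Squaring the difference $Y_{t_{n}}-\overline{Y}_{n}$, taking conditional expectation, applying Itô isometry on the Brownian and Poisson integrals, Young's inequality with a small parameter $\eta>0$, and Lipschitz continuity of $f$ and $g$, together with the $\tfrac12$-Hölder property in time from Assumption \ref{assump:ExistenceUniquenessFBSDE}\textbf{(A2)}(a) and the forward error already established, yields a recursive inequality of the schematic form
$$\mathbb{E}|Y_{t_{n}}-\overline{Y}_{n}|^{2}+c\,\mathbb{E}\!\!\int_{t_{n}}^{t_{n+1}}\!\!\bigl(|Z_{s}-\overline{Z}_{n}|^{2}+|\Psi_{s}-\overline{\Psi}_{n}|^{2}\bigr)ds\le(1+C\Delta t)\,\mathbb{E}|Y_{t_{n+1}}-\overline{Y}_{n+1}|^{2}+C\Delta t\,\varepsilon_{n}^{\mathrm{reg}}+C\Delta t^{2},$$
with terminal input $\mathbb{E}|Y_{T}-\overline{Y}_{M}|^{2}\le K_{g}^{2}\,\mathbb{E}|X_{T}-\overline{X}_{M}|^{2}=O(\Delta t)$. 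Iterating backwards and summing the nonnegative $Z$/$\Psi$ terms will produce the whole left-hand side of \eqref{eq:ErrorEstimateFBSDEWithJumps2}, and $\sup_{t\in[t_{n},t_{n+1}]}|Y_{t}-\overline{Y}_{n}|^{2}$ is recovered from Doob's inequality applied to the martingale part.

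\textbf{Main obstacle.} The bottleneck is the residual quantity $\varepsilon_{n}^{\mathrm{reg}}$: to obtain the rate $\Delta t$ rather than $\sqrt{\Delta t}$, I need the $L^{2}$-time regularity
$$\sum_{n=0}^{M-1}\mathbb{E}\!\int_{t_{n}}^{t_{n+1}}\!\!\bigl|Z_{s}-\hat Z_{n}\bigr|^{2}ds+\sum_{n=0}^{M-1}\mathbb{E}\!\int_{t_{n}}^{t_{n+1}}\!\!\bigl|\Psi_{s}-\hat\Psi_{n}\bigr|^{2}ds\le C\Delta t,$$
where $\hat Z_{n},\hat\Psi_{n}$ are the $\mathcal{F}_{t_{n}}$-conditional time-averages. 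For diffusive BSDEs this is the classical Zhang/Bouchard-Touzi path-regularity result; in the jump setting one proceeds as in \cite{bouchard_elie_2008} by establishing Malliavin differentiability of the triplet $(Y,Z,U)$ and representing $Z_{t},U_{t}(z)$ via Malliavin derivatives of $(X,Y)$, whose time increments are of order $\sqrt{\Delta t}$ in $L^{2}$. This Malliavin-calculus step, which crucially relies on Lipschitz continuity of $b,\sigma,\gamma,g,f$ and Hölder continuity of $f$ in time, is the substantive input; once granted, the three blocks above assemble directly into \eqref{eq:ErrorEstimateFBSDEWithJumps2}.
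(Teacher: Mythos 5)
The paper does not prove this theorem itself but imports it verbatim from Theorem 5.1.1 in \cite{delong} and Corollary 2.1 and Remark 2.7 in \cite{bouchard_elie_2008}, and your outline correctly reconstructs the standard argument behind those references: the Euler forward error, a backward recursion driven by the conditional-expectation identities for $\overline{Z}_n$ and $\overline{\Psi}_n$ (the content of Lemma \ref{lemma:FirstEstimateAuxiliary}), and the Malliavin-based $L^2$ path regularity of $(Y,Z,\Psi)$ as the substantive input, so your proposal is consistent with the paper's treatment. The one imprecision worth noting is your description of $\overline{\Psi}_n$ as a best constant approximation of the stochastic integral $\int\!\!\int U\,\widetilde{N}(ds,dz)$: by \eqref{eq:ProductDeltaJump} it is in fact the $\mathcal{F}_{t_n}$-conditional time-average of $\Psi_s=\int_{\bR^d}U_s(z)\,\nu(dz)$ (which is the object the regularity estimate must control, and where $K_\nu<\infty$ enters via Cauchy--Schwarz), but this does not change the structure of the argument.
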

{Under Assumptions \ref{assump:ExistenceUniquenessFBSDE} \textbf{(A1)}, \textbf{(A2)} and \ref{assump:FiniteActivity}, we can rewrite \eqref{eq:ErrorEstimateFBSDEGoal1} with Theorem \ref{theorem:ErrorEstimateSchemeDelong} as follows}
 \allowdisplaybreaks
 \begin{align} \label{eq:ErrorEstimateFBSDEGoal2}
 &\max_{n=0,1,...,M-1} \mathbb{E} \left[ \sup_{t \in [t_n,t_{n+1}]} \big \vert X_t - \widetilde{X}_n \big \vert^2 \right]+ \max_{n=0,1,...,M-1} \mathbb{E} \left[ \sup_{t \in [t_n,t_{n+1}]} \big \vert Y_t - \widetilde{Y}_n \big \vert^2 \right] \nonumber \\
 & \quad  + \mathbb{E} \left[ \sum_{n=0}^{M-1}\int_{t_n}^{t_{n+1}} \big \vert Z_t - \widetilde{Z}_n \big \vert^2 \textnormal{d}t \right]+  \mathbb{E} \left[ \sum_{n=0}^{M-1}\int_{t_n}^{t_{n+1}}\big \vert \Psi_t - \widetilde{\Psi}_n \big \vert^2 \textnormal{d}t \right] \nonumber \\
 & \leq  {C} \Delta t+
  2  \max_{n=0,1,...,M-1} \mathbb{E} \left[ \big \vert \widetilde{X}_n - \overline{X}_n \big \vert^2 \right]+ 2\max_{n=0,1,...,M-1} \mathbb{E} \left[  \big \vert \widetilde{Y}_n - \overline{Y}_n \big \vert^2 \right] \nonumber \\
 & \quad  + 2 \Delta t \mathbb{E} \left[ \sum_{n=0}^{M-1} \big \vert \widetilde{Z}_n - \overline{Z}_n \big \vert^2  \right]+  2 \Delta t \mathbb{E} \left[ \sum_{n=0}^{M-1}\big \vert \widetilde{\Psi}_n - \overline{\Psi}_n \big \vert^2 \right],   \end{align}
 where $(\overline{X}_n,\overline{Y}_n,\overline{Z}_n,\overline{\Psi}_n)_{n=0,...,  {M}}$ is given in \eqref{eq:SchemeDelong}. Thus, we now aim to bound the expected squared differences between   $(\overline{X}_n,\overline{Y}_n,\overline{Z}_n,\overline{\Psi}_n)_{n=0,...,  {M}}$ and $(\widetilde{X}_n, \widetilde{Y}_n,\widetilde{Z}_n,\widetilde{\Psi}_n)_{n=0,...,M}$ in \eqref{eq:ErrorEstimateFBSDEGoal2} by $\bE\left[ \vert g (\widetilde{X}_T)-\widetilde{Y}_T \vert^2 \right].$
 To do so, we introduce the following system of equations for $(\overline{\overline{X}}_n,\overline{\overline{Y}}_n, \overline{\overline{Z}}_n, \overline{\overline{\Psi}}_n )_{n=0,...,M}$
 \begin{align} \label{eq:SchemeDelongWithoutTerminalCondition}
 \begin{cases}
 	\overline{\overline{X}}_{n+1}&=\overline{\overline{X}}_n +b(\overline{\overline{X}}_n) \Delta t+ \sigma(\overline{\overline{X}}_n)^{\top}\Delta W_n + \int_{t_n}^{t_{n+1}}\int_{\mathbb{R}^d} \gamma(\overline{\overline{X}}_n)z\widetilde{N}( \textnormal{d}r, \textnormal{d}z), \\
	\overline{\overline{X}}_0&=x,\\
	\overline{\overline{Z}}_n&= \frac{1}{\Delta t} \mathbb{E} \left[ \overline{\overline{Y}}_{n+1} \Delta W_n \vert \mathcal{F}_n \right], \\
	\overline{\overline{\Psi}}_n&= \frac{1}{\Delta t} \mathbb{E} \left[ \overline{\overline{Y}}_{n+1} \int_{t_n}^{t_{n+1}}\int_{\mathbb{R}^d} \widetilde{N}(\textnormal{d}r,\textnormal{d}z) \vert \mathcal{F}_n \right], \\
	\overline{\overline{Y}}_n&= \mathbb{E}\left[\overline{\overline{Y}}_{n+1} \vert \mathcal{F}_n \right]+f\left( t_n, \overline{\overline{X}}_n, \overline{\overline{Y}}_n, \overline{\overline{Z}}_n, \overline{\overline{\Psi}}_n\right) \Delta t
	\end{cases}
 \end{align}
with $\overline{\overline{Z}}_M=\overline{\overline{\Psi}}_M:=0$ and $\overline{\overline{Y}}_M$ is an arbitrary $\mathcal{F}_T$-measurable random variable.
Obviously, the discretization scheme $(\overline{X}, \overline{Y},\overline{Z},\overline{\Psi})$ defined in \eqref{eq:SchemeDelong} satisfies also the system of equations \eqref{eq:SchemeDelongWithoutTerminalCondition}, as we do not fix a terminal condition in \eqref{eq:SchemeDelongWithoutTerminalCondition}. We now proceed as follows to find an error estimate in \eqref{eq:ErrorEstimateFBSDEGoal2}.\\
 In a first step, we show that the solution of the scheme in \eqref{eq:SchemeDeepSolverOneNeuralNetwork}, $(\widetilde{X},\widetilde{Y}, \widetilde{Z}, \widetilde{\Psi}),$ satisfies the equations in the system \eqref{eq:SchemeDelongWithoutTerminalCondition}. In a second step, we derive an estimate for the difference of two solutions of the scheme in \eqref{eq:SchemeDelongWithoutTerminalCondition}. Applying this result to $(\widetilde{X},\widetilde{Y}, \widetilde{Z}, \widetilde{\Psi})$ and $(\overline{X}, \overline{Y},\overline{Z},\overline{\Psi})$, which both satisfy the system of equations in \eqref{eq:SchemeDelongWithoutTerminalCondition}, allows to conclude the error estimate in \eqref{eq:ErrorEstimateFBSDEGoal2}.

\allowdisplaybreaks

\begin{lemma} \label{lemma:SatisfiesScheme}
Let Assumptions \ref{assump:ExistenceUniquenessFBSDE} \textbf{(A1)}, \textbf{(A3)}, \ref{assump:FiniteActivity} and \ref{assump:NeuralNetwork} hold. Then the solution \linebreak $(\widetilde{X}_n, \widetilde{Y}_n, \widetilde{Z}_n, \widetilde{\Psi}_n)_{n =0,...,  {M}}$ of \eqref{eq:SchemeDeepSolverOneNeuralNetworkRewritten1} satisfies the system of equations in \eqref{eq:SchemeDelongWithoutTerminalCondition}.\end{lemma}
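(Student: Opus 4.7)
The plan is to verify the identities in \eqref{eq:SchemeDelongWithoutTerminalCondition} one at a time, starting from the forward equation and then obtaining the three backward identities by multiplying the $\widetilde Y_{n+1}$-recursion in \eqref{eq:SchemeDeepSolverOneNeuralNetworkRewritten1} successively by $\Delta W_n$, by the Poisson martingale increment $\widetilde M_n:=\int_{t_n}^{t_{n+1}}\int_{\bR^d}\widetilde N(\textnormal{d}r,\textnormal{d}z)$, and by $1$, then taking $\cF_n$-conditional expectations. The forward identity is immediate, because the recursion for $\widetilde X_n$ in \eqref{eq:SchemeDeepSolverOneNeuralNetworkRewritten1} coincides term-by-term with that of $\overline{\overline X}_n$ and both start from $x$. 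The square-integrability needed to justify the It\^o-isometry manipulations and the exchange of integrals with conditional expectations is provided by Lemma \ref{lemma:SchemeAdaptedIntegrable}.

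For the $\widetilde Z$-identity, multiplying the recursion by $\Delta W_n$ and using $\mathbb E[\Delta W_n\mid\cF_n]=0$ kills the $\cF_n$-measurable terms $\widetilde Y_n$ and $f(t_n,\widetilde X_n,\widetilde Y_n,\widetilde Z_n,\widetilde\Psi_n)\Delta t$. The Poisson-jump cross term cancels because $W$ and $N$ are independent and compensated Poisson integrals have zero mean. The only surviving summand is $\mathbb E[\widetilde Z_n^{\top}\Delta W_n\,\Delta W_n\mid\cF_n]=\widetilde Z_n\,\Delta t$, so dividing by $\Delta t$ yields $\widetilde Z_n=\tfrac{1}{\Delta t}\mathbb E[\widetilde Y_{n+1}\Delta W_n\mid\cF_n]$.

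For the $\widetilde\Psi$-identity I multiply instead by $\widetilde M_n$. The $\cF_n$-measurable contributions vanish since $\mathbb E[\widetilde M_n\mid\cF_n]=0$, and the Brownian cross term vanishes by independence of $W$ and $N$. The non-trivial contribution is the self cross-variation of the Poisson stochastic integral: interpreting the jump increment appearing in \eqref{eq:SchemeDeepSolverOneNeuralNetworkRewritten1} as $\int_{t_n}^{t_{n+1}}\int_{\bR^d}\bigl(\cU_n^{\rho_n}(\widetilde X_n+\gamma(\widetilde X_n)z)-\cU_n^{\rho_n}(\widetilde X_n)\bigr)\widetilde N(\textnormal{d}r,\textnormal{d}z)$ and applying the It\^o isometry for compensated Poisson integrals produces $\Delta t\,\widetilde\Psi_n$, which is the required formula after dividing by $\Delta t$. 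Finally, taking the $\cF_n$-conditional expectation of the recursion itself annihilates the two martingale increments and leaves $\mathbb E[\widetilde Y_{n+1}\mid\cF_n]=\widetilde Y_n-f(t_n,\widetilde X_n,\widetilde Y_n,\widetilde Z_n,\widetilde\Psi_n)\Delta t$, which after rearrangement is the last equation of \eqref{eq:SchemeDelongWithoutTerminalCondition}.

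The main bookkeeping obstacle is the careful pairing of cross-variation terms in the second step: one needs the $\cF_n$-measurability of the integrands to pull them out of the It\^o isometry, and the independence of $W$ and $N$ combined with the zero-mean property of the compensated increments to discard the Brownian-Poisson cross contribution. Everything else is routine given Assumptions \ref{assump:ExistenceUniquenessFBSDE}, \ref{assump:FiniteActivity}, \ref{assump:NeuralNetwork} and the square-integrability already established.
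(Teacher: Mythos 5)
Your proof is correct and follows essentially the same route as the paper: conditioning the $\widetilde Y_{n+1}$-recursion on $\cF_n$, and multiplying it by $\Delta W_n$ and by $\int_{t_n}^{t_{n+1}}\int_{\bR^d}\widetilde N(\mathrm{d}r,\mathrm{d}z)$ before conditioning, using the orthogonality of the Brownian and compensated-Poisson integrals and the conditional (polarized) isometry for the surviving terms. The paper makes the very same identification of the aggregated jump term $\cU_n^{\rho_n}\bigl(\widetilde X_n+\gamma(\widetilde X_n)\sum_i z_i\bigr)-\cU_n^{\rho_n}(\widetilde X_n)$, net of its compensator, with the compensated Poisson integral of $z\mapsto\cU_n^{\rho_n}(\widetilde X_n+\gamma(\widetilde X_n)z)-\cU_n^{\rho_n}(\widetilde X_n)$ that you describe as an ``interpretation'', so your argument is on exactly the same footing as the paper's.
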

\begin{proof}
By taking the conditional expectation of $\widetilde{Y}_{n+1}$ in \eqref{eq:SchemeDeepSolverOneNeuralNetworkRewritten1}, we have   {with Lemma \ref{lemma:SchemeAdaptedIntegrable}}
\begin{align}
\bE \left[\widetilde{Y}_{n+1} \vert \cF_n \right]&= 	\widetilde{Y}_n- f\left( t_n, \widetilde{X}_n, \widetilde{Y}_n, \widetilde{Z}_n, \widetilde{\Psi}_n \right) \Delta t + \widetilde{Z}_n^{\top}\bE \left[ \Delta W_n \vert \cF_n\right] \nonumber \\
& \quad  + \bE \left[ \left( \cU_n^{\rho_n}\left(\widetilde{X}_n + \gamma(\widetilde{X}_n)\sum_{i=N([0,t_n], \mathbb{R}^d)+1}^{N([0,t_{n+1}], \mathbb{R}^d)} z_i\right)- \cU_n^{\rho_n}(\widetilde{X}_n) \right)	 - \int_{t_n}^{t_{n+1}} \widetilde{\Psi}_n \textnormal{dt}\vert \cF_n \right] \nonumber \\
&= \widetilde{Y}_n- f\left( t_n, \widetilde{X}_n, \widetilde{Y}_n, \widetilde{Z}_n, \widetilde{\Psi}_n \right) \Delta t  \nonumber \\
& \quad +\bE \left[ \int_{t_n}^{t_{n+1}} \int_{\bR^d}  \left( \cU_n^{\rho_n}(\widetilde{X}_n + \gamma(\widetilde{X}_n) z)- \cU_n^{\rho_n}(\widetilde{X}_n) \right) \widetilde{N}(\textnormal{d}r, \textnormal{d}z) \vert \cF_n \right] \nonumber \\
& = \widetilde{Y}_n- f\left( t_n, \widetilde{X}_n, \widetilde{Y}_n, \widetilde{Z}_n, \widetilde{\Psi}_n \right) \Delta t , \label{eq:YSatisfiesScheme1}
\end{align}
where we use {the fact that} $\cU_n^{\rho_n}(\widetilde{X}_n + \gamma(\widetilde{X}_n) z)- \cU_n^{\rho_n}(\widetilde{X}_n)$, as a constant process on $[t_n, t_{n+1}]$, is an element in $\bH_{[t_n,t_n+1],N}^2(\bR^d)$. 
By multiplying the equation for $\widetilde{Y}_{n+1}$ by $\Delta W_n$ and conditioning on $\cF_n$ we have
\begin{align}
&\bE \left[\widetilde{Y}_{n+1} \Delta W_n \vert \cF_n \right] \nonumber \\
&= \Delta t \ \widetilde{Z}_n + \bE \left[ \left(\int_{t_n}^{t_{n+1}} \int_{\bR^d}  \left( \cU_n^{\rho_n}(\widetilde{X}_n + \gamma(\widetilde{X}_n) z)- \cU_n^{\rho_n}(\widetilde{X}_n) \right) \widetilde{N}(\textnormal{d}r, \textnormal{d}z) \right) \left( \int_{t_n}^{t_{n+1}} dW_r\right) \vert \cF_n \right] \nonumber \\
&=\Delta t \ \widetilde{Z}_n, \nonumber
\end{align}
where we use again the regularity of $ \cU_n^{\rho_n}(\widetilde{X}_n + \gamma(\widetilde{X}_n) z)- \cU_n^{\rho_n}(\widetilde{X}_n)$. This, shows that $\widetilde{Z}_n$ satisfies the corresponding equation in \eqref{eq:SchemeDelongWithoutTerminalCondition}. 
  {M}ultiplying the equation for $\widetilde{Y}_{n+1}$ by $\int_{t_n}^{t_{n+1}} \int_{\mathbb{R}^d} \widetilde{N}(\textnormal{d}r,\textnormal{d}z)$ and considering the   {conditional} expected value, yields
\allowdisplaybreaks
\begin{align}
&\mathbb{E}\left[ \widetilde{Y}_{n+1}\int_{t_n}^{t_{n+1}} \int_{\mathbb{R}^d} \widetilde{N}(\textnormal{d}r,\textnormal{d}z)\vert \mathcal{F}_n\right] \nonumber
\\
&=\mathbb{E}\left[ \int_{t_n}^{t_{n+1}}\int_{\mathbb{R}^d}\left( \cU_n^{\rho_n}(\widetilde{X}_n + \gamma(\widetilde{X}_n) z)- \cU_n^{\rho_n}(\widetilde{X}_n) \right)\widetilde{N}(\textnormal{d}r,\textnormal{d}z) \int_{t_n}^{t_{n+1}} \int_{\mathbb{R}^d} \widetilde{N}(\textnormal{d}r,\textnormal{d}z)\vert \mathcal{F}_n\right] \nonumber \\
&=\mathbb{E}\left[ \int_{t_n}^{t_{n+1}}\int_{\mathbb{R}^d}\left( \cU_n^{\rho_n}(\widetilde{X}_n + \gamma(\widetilde{X}_n) z)- \cU_n^{\rho_n}(\widetilde{X}_n) \right){N}(\textnormal{d}r,\textnormal{d}z) \vert \mathcal{F}_n\right] \nonumber \\
&=\mathbb{E}\left[ \int_{t_n}^{t_{n+1}}\int_{\mathbb{R}^d}\left( \cU_n^{\rho_n}(\widetilde{X}_n + \gamma(\widetilde{X}_n) z)- \cU_n^{\rho_n}(\widetilde{X}_n) \right){\nu}(\textnormal{d}z)\textnormal{d}r \vert \mathcal{F}_n\right]\nonumber \\
&= \Delta t \ \int_{\mathbb{R}^d}\left( \cU_n^{\rho_n}(\widetilde{X}_n + \gamma(\widetilde{X}_n) z)- \cU_n^{\rho_n}(\widetilde{X}_n) \right){\nu}(\textnormal{d}z)\nonumber \nonumber\\
&=\Delta t \ \widetilde{\Psi}_n, 
 \end{align}
 which concludes the proof.
\end{proof}

{We prove the following auxiliary result, that we use to derive in a second step an estimate for the difference of two solutions satisfying the scheme in \eqref{eq:SchemeDelongWithoutTerminalCondition}.}
\begin{lemma} \label{lemma:FirstEstimateAuxiliary}
	Let $0 \leq s_1 < s_2 \leq T$ and $Q \in L^2(\Omega, \cF_{s_2},\bP)$. Let $(Z_t)_{t \in [s_1,s_2]}$ and $(U_t)_{t \in [s_1,s_2]}$ be $\bF$-predictable processes which are integrable with respect to $W$ and $\widetilde{N},$ respectively, and such that by the predictable representation property it holds
\begin{align}
Q = \bE \left[Q\vert \cF_{s_1} \right]+ \int_{s_1}^{s_2}( Z_r)^{\top} \textnormal{d}W_r + \int_{s_1}^{s_2}\int_{\bR^d} U_r(z) \widetilde{N}(\textnormal{d}r,\textnormal{d}z).
\end{align}
Then we have
\begin{align}
\bE \left[Q(W_{s_2}-W_{s_1})\vert \cF_{s_1} \right]&= \bE \left[\int_{s_1}^{s_2} Z_r\textnormal{d}r \Big \vert \cF_{s_1} \right],\label{eq:ProductDeltaBrownianMotion} \\
\bE \left[Q\left( \int_{s_1}^{s_2} \int_{\bR^d} \widetilde{N}(\textnormal{d}r,\textnormal{dz})\right)\Big \vert \cF_{s_1} \right]&= \bE \left[\int_{s_1}^{s_2} \int_{\bR^d} U_r(z) {N}(\textnormal{d}r,\textnormal{dz}) \Big \vert \cF_{s_1}\right].\label{eq:ProductDeltaJump}
\end{align}
\end{lemma}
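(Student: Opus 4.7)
The plan is to substitute the predictable representation for $Q$ into the left-hand sides of \eqref{eq:ProductDeltaBrownianMotion} and \eqref{eq:ProductDeltaJump}, multiply out, and then evaluate the resulting three conditional expectations one at a time. The key tools are the $\cF_{s_1}$-measurability of $\bE[Q\mid\cF_{s_1}]$ together with the centering of the increments of $W$ and $\widetilde N$, the conditional It\^o isometry for Brownian motion and the conditional isometry for the compensated Poisson random measure, and the $L^2$-orthogonality of stochastic integrals driven by $W$ and $\widetilde N$ (which is immediate from the independence of $W$ and $N$, and can also be read off of the vanishing of the mutual quadratic covariation).

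For \eqref{eq:ProductDeltaBrownianMotion} I would write $Q(W_{s_2}-W_{s_1})$ as the sum of three products and take $\bE[\,\cdot\mid\cF_{s_1}]$. The term $\bE[Q\mid\cF_{s_1}](W_{s_2}-W_{s_1})$ has $\cF_{s_1}$-conditional expectation zero because $W_{s_2}-W_{s_1}$ is independent of $\cF_{s_1}$ with mean zero. For the Brownian cross term, writing $W_{s_2}^{(i)}-W_{s_1}^{(i)} = \int_{s_1}^{s_2} e_i^\top dW_r$ and applying the conditional It\^o isometry componentwise gives
\[
\bE\!\left[\Big(\int_{s_1}^{s_2}Z_r^\top dW_r\Big)(W_{s_2}^{(i)}-W_{s_1}^{(i)}) \,\Big|\, \cF_{s_1}\right] = \bE\!\left[\int_{s_1}^{s_2} Z_r^{(i)}\,dr\,\Big|\,\cF_{s_1}\right],
\]
which assembles into the claimed identity. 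The remaining cross term involves the product of a $W$-integral with a $\widetilde N$-integral; this has zero conditional expectation by the orthogonality mentioned above.

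For \eqref{eq:ProductDeltaJump} I proceed analogously, multiplying by $\int_{s_1}^{s_2}\!\int_{\bR^d}\widetilde N(dr,dz)$. The $\bE[Q\mid\cF_{s_1}]$-term again drops out by centering of $\widetilde N$, and the $W$-$\widetilde N$ cross term drops by orthogonality. For the surviving term, the conditional isometry for the compensated Poisson random measure yields
\[
\bE\!\left[\Big(\int_{s_1}^{s_2}\!\!\int_{\bR^d} U_r(z)\widetilde N(dr,dz)\Big)\Big(\int_{s_1}^{s_2}\!\!\int_{\bR^d}\widetilde N(dr,dz)\Big)\,\Big|\,\cF_{s_1}\right] = \bE\!\left[\int_{s_1}^{s_2}\!\!\int_{\bR^d} U_r(z)\,\nu(dz)\,dr\,\Big|\,\cF_{s_1}\right],
\]
and decomposing $N = \widetilde N + \nu(dz)\,dr$ and using that the $\widetilde N$-integral of $U$ is a martingale identifies this with $\bE[\int_{s_1}^{s_2}\!\int_{\bR^d} U_r(z)N(dr,dz)\mid \cF_{s_1}]$, giving \eqref{eq:ProductDeltaJump}.

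The only delicate point is justifying the cross-term vanishing and the applicability of the conditional isometries on the random interval $[s_1,s_2]$; I would handle this by treating $\mathbf{1}_{(s_1,s_2]}Z$ and $\mathbf{1}_{(s_1,s_2]}U$ as globally defined predictable integrands in $\bH^2_{[0,T]}(\bR^d)$ and $\bH^2_{[0,T],N}(\bR)$, respectively, so that all stochastic integrals are square-integrable martingales. Once that is in place, every manipulation above is routine, and there is no genuinely hard step.
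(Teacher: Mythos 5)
Your proposal is correct and is in substance the same argument as the paper's: the paper applies It\^{o}'s formula to the auxiliary product processes $Q_s\cdot\int_{s_1}^s dW_r$ and $Q_s\cdot\int_{s_1}^s\int_{\bR^d}\widetilde N(dr,dz)$ and then kills the martingale terms, which is exactly your direct expansion combined with the conditional isometries and the orthogonality of the $W$- and $\widetilde N$-integrals. The one point worth keeping in mind (common to both write-ups) is that $\int_{s_1}^{s_2}\int_{\bR^d}\widetilde N(dr,dz)$ is square-integrable only under the finite-activity assumption $\nu(\bR^d)<\infty$, which is in force in the section where the lemma is used.
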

\begin{proof}
Define the auxiliary stochastic process
\begin{equation*}
	Q_s = \left(\bE \left[Q \vert \cF_{s_1} \right]+\int_{s_1}^{s}( Z_r)^{\top} \textnormal{d}W_r + \int_{s_1}^{s}\int_{\bR^d} U_r(z) \widetilde{N}(\textnormal{d}r,\textnormal{d}z)\right) \int_{s_1}^s \textnormal{d}W_r, \quad s \in [s_1,s_2].
\end{equation*}	
By applying It\^{o}'s formula we get
\begin{align*}
\textnormal{d}Q_{s}& =(W_s - W_{s_1})(Z_s)^{\top}\textnormal{d}W_s+ \int_{\mathbb{R}^d}(W_s - W_{s_1})U_s(z)\widetilde{N}(\textnormal{d}s,\textnormal{d}z) + Z_s \textnormal{d}s  \nonumber \\
	& \quad + \left(\bE \left[Q \vert \cF_{s_1} \right]+\int_{s_1}^{s}(Z_r)^{\top} \textnormal{d}W_r + \int_{s_1}^{s}\int_{\bR^d} U_r(z) \widetilde{N}(\textnormal{d}r,\textnormal{d}z)\right) \textnormal{d}W_s .
\end{align*}
By taking the conditional expectation and using $Q_{s_1}=0$ and $Q_{s_2}=Q (W_{s_2}-W_{s_1})$, we have
\begin{align*}
	\bE \left [Q_{s_2} \vert \cF_{s_1} \right]= \bE \left[ \int_{s_1}^{s_2} Z_r \textnormal{d}r \Big \vert \cF_{s_1} \right],
\end{align*}
by using the martingale property of $\int \cdot \textnormal{d}W_r, \int \int_{\bR^d} \cdot \widetilde{N}(\textnormal{d}r,\textnormal{d}z)$ and $\left[\int \cdot \textnormal{d}W_r ,\int \int_{\bR^d} \cdot \widetilde{N}(\textnormal{d}r,\textnormal{d}z) \right]=0.$\\
We now define another auxiliary stochastic process
\begin{equation*}
	\overline{Q}_s = \left(\bE \left[\overline{Q}\vert \cF_{s_1} \right]+\int_{s_1}^{s}( Z_r)^{\top} \textnormal{d}W_r + \int_{s_1}^{s}\int_{\bR^d} U_r(z) \widetilde{N}(\textnormal{d}r,\textnormal{d}z)\right) {\int_{s_1}^s \int_{\bR^d} \widetilde{N}(\textnormal{d}r}, \textnormal{d}z), \quad s \in [s_1,s_2].
\end{equation*}	
By It\^{o}'s formula we have
\begin{align*}
	\textnormal{d}\overline{Q}_s&= \left(\bE \left[\overline{Q}\vert \cF_{s_1} \right]+\int_{s_1}^{s}( Z_r)^{\top} \textnormal{d}W_r + \int_{s_1}^{s}\int_{\bR^d} U_r(z) \widetilde{N}(\textnormal{d}r,\textnormal{d}z)\right) \int_{\bR^d}\widetilde{N}(\textnormal{d}s, \textnormal{d}z)  \nonumber\\
	& \quad + \left(\int_{s_1}^s \int_{\bR^d} \widetilde{N}(\textnormal{d}r,dz) \right)(Z_s)^{\top}\textnormal{d}W_s + \left( \int_{s_1}^s \int_{\bR^d} \widetilde{N}(\textnormal{d}r,\textnormal{d}z) \right)\int_{\bR^d}U_s(z) \widetilde{N}(\textnormal{d}s,\textnormal{d}z) \nonumber \\
    & \quad+ \int_{\bR^d}U_s(z) N(\textnormal{d}s,\textnormal{d}z),
\end{align*}
and by considering the conditional expectation we get
\begin{align*}
	\bE \left[Q\left( \int_{s_1}^{s_2} \int_{\bR^d} \widetilde{N}(\textnormal{d}r,\textnormal{dz})\right) \Big \vert \cF_{s_1} \right]= \bE \left[\int_{s_1}^{s_2} \int_{\bR^d} U_r(z) {N}(\textnormal{d}r,\textnormal{dz}) \Big \vert \cF_{s_1}\right].
\end{align*}

\end{proof}
{We are now in a position to state an estimate for the difference of two solutions of the scheme in \eqref{eq:SchemeDelongWithoutTerminalCondition}.}
 \begin{proposition} \label{prop:SchemeWithoutTerminalDifference}
Let Assumptions \ref{assump:ExistenceUniquenessFBSDE} \textbf{(A1)}, \textbf{(A3)} and \ref{assump:FiniteActivity} hold. For $j=1,2,$ let \linebreak $(X^j_n,Y^j_n,Z^j_n,\Psi^j_n)_{n=0,...,{M}}$ be two solutions of \eqref{eq:SchemeDelongWithoutTerminalCondition} such that $X^j_n,Y_n^j \in L^2(\Omega, \cF_n, \bP)$ for $n=0,...,{M}$. 
 Then for $n=0,...,{M-1}${,}
for any $\lambda>0$ and sufficiently small $\Delta t$ such that
\begin{equation} \label{eq:LambdaConstraint1}
\lambda^2 \geq (1 \vee K_{\nu})K_f \vee \frac{1-\sqrt{1-{12} (\Delta t)^2 K_f^2}}{{6} \Delta t K_f} \quad \text{and} \quad \lambda^2 \leq \frac{1+\sqrt{1-{12}  (\Delta t)^2 K_f^2}}{{6} \Delta t K_f},
\end{equation}
 we have
\begin{align} \label{eq:DifferenceTwoSolutionsSchemeWithoutTerminal2}
	&\bE\left[ \big \vert Y_n^1-Y_n^2 \big \vert^2 \right]\leq e^{F_1(M-n) \Delta t}\bE \left[ \big \vert  Y_{M}^1-Y_M^2 \big \vert^2 \right]
\end{align}
with 
\begin{equation} \label{eq:DefinitionConstantF}
    F_1:=-\frac{\ln(1-\Delta t K_f(4 \lambda ^2 + \frac{1}{\lambda^2}))}{\Delta t } . 
\end{equation} 
 \end{proposition}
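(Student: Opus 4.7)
The plan is to derive a one-step contraction for the differences $\delta Y_n := Y_n^1-Y_n^2$, $\delta Z_n := Z_n^1-Z_n^2$, $\delta \Psi_n := \Psi_n^1-\Psi_n^2$, and then to iterate. First, since both triples $(X_n^j,Y_n^j,Z_n^j,\Psi_n^j)$, $j=1,2$, satisfy the same forward equation in \eqref{eq:SchemeDelongWithoutTerminalCondition} with common initial condition $x$ and common driving $(W,\widetilde N)$, the forward components coincide: $X_n^1=X_n^2$ for every $n$. Writing $\delta f_n:=f(t_n,X_n,Y_n^1,Z_n^1,\Psi_n^1)-f(t_n,X_n,Y_n^2,Z_n^2,\Psi_n^2)$, the implicit equation for $Y$ in \eqref{eq:SchemeDelongWithoutTerminalCondition} gives $\delta Y_n=\mathbb{E}[\delta Y_{n+1}\mid\mathcal F_n]+\Delta t\,\delta f_n$, and a Young inequality of the form $(a+b)^2\le(1+c)a^2+(1+1/c)b^2$, with $c$ to be tuned in terms of $\Delta t, K_f,\lambda$, yields $|\delta Y_n|^2 \le (1+c)\,|\mathbb{E}[\delta Y_{n+1}\mid\mathcal F_n]|^2+(1+1/c)(\Delta t)^2|\delta f_n|^2$. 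The Lipschitz bound in Assumption \ref{assump:ExistenceUniquenessFBSDE}\,\textbf{(A1)} then controls $|\delta f_n|^2$ by (a weighted variant of) $3K_f^2(|\delta Y_n|^2+|\delta Z_n|^2+|\delta\Psi_n|^2)$.

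Next, I will apply the martingale representation theorem to $\delta Y_{n+1}\in L^2(\Omega,\mathcal F_{n+1},\mathbb P)$ on $[t_n,t_{n+1}]$ to obtain predictable integrands $\tilde Z,\tilde U$ with $\delta Y_{n+1}=\mathbb{E}[\delta Y_{n+1}\mid\mathcal F_n]+\int_{t_n}^{t_{n+1}}\tilde Z_r^{\top}\,dW_r+\int_{t_n}^{t_{n+1}}\!\int_{\mathbb R^d}\tilde U_r(z)\,\widetilde N(dr,dz)$. Inserted into the defining conditional expectations of $Z_n^j$ and $\Psi_n^j$ in \eqref{eq:SchemeDelongWithoutTerminalCondition}, Lemma \ref{lemma:FirstEstimateAuxiliary} identifies $\Delta t\,\delta Z_n$ with $\mathbb{E}[\int_{t_n}^{t_{n+1}}\tilde Z_r\,dr\mid\mathcal F_n]$ and $\Delta t\,\delta\Psi_n$ with $\mathbb{E}[\int_{t_n}^{t_{n+1}}\!\int_{\mathbb R^d}\tilde U_r(z)\,\nu(dz)\,dr\mid\mathcal F_n]$. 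Conditional Jensen's inequality, together with Jensen's inequality on the finite measure $\nu$ (valid by Assumption \ref{assump:FiniteActivity}, $\nu(\mathbb R^d)=K_\nu<\infty$) for the $\Psi$-term, combined with the It\^o isometry applied to the martingale-representation form of $\delta Y_{n+1}$, gives the telescoping bound $\Delta t\,\mathbb{E}[|\delta Z_n|^2+|\delta\Psi_n|^2]\le(1\vee K_\nu)\bigl(\mathbb{E}[|\delta Y_{n+1}|^2]-\mathbb{E}[|\mathbb{E}[\delta Y_{n+1}\mid\mathcal F_n]|^2]\bigr)$.

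Combining the two ingredients and choosing the Young parameter $c$ as a specific function of $\Delta t K_f$ and $\lambda$ converts everything into an inequality in the three unknowns $\mathbb{E}[|\delta Y_n|^2]$, $\mathbb{E}[|\delta Y_{n+1}|^2]$ and $\mathbb{E}[|\mathbb{E}[\delta Y_{n+1}\mid\mathcal F_n]|^2]$. The lower constraint $\lambda^2\ge(1\vee K_\nu)K_f$ in \eqref{eq:LambdaConstraint1} is precisely what is needed to render the coefficient of $\mathbb{E}[|\mathbb{E}[\delta Y_{n+1}\mid\mathcal F_n]|^2]$ non-positive, so that that term may be discarded; the upper constraint on $\lambda^2$ ensures $1-\Delta t K_f(4\lambda^2+1/\lambda^2)>0$, which in turn lets me absorb a residual $\Delta t\,\mathbb{E}[|\delta Y_n|^2]$ term from the right-hand side into the left-hand side. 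The resulting one-step recursion $\mathbb{E}[|\delta Y_n|^2]\le\bigl(1-\Delta t K_f(4\lambda^2+1/\lambda^2)\bigr)^{-1}\mathbb{E}[|\delta Y_{n+1}|^2]$, iterated $M-n$ times and combined with the identity $-\ln(1-\Delta t K_f(4\lambda^2+1/\lambda^2))=F_1\Delta t$ of \eqref{eq:DefinitionConstantF}, gives the claimed estimate \eqref{eq:DifferenceTwoSolutionsSchemeWithoutTerminal2}.

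The principal difficulty is the fine tuning in the last paragraph: one must pick the Young parameter $c$ so that, after substituting the telescoping bound for $\Delta t\,\mathbb{E}[|\delta Z_n|^2+|\delta\Psi_n|^2]$, the coefficient of $\mathbb{E}[|\mathbb{E}[\delta Y_{n+1}\mid\mathcal F_n]|^2]$ cancels (using the lower bound on $\lambda^2$), while the coefficient of $\mathbb{E}[|\delta Y_{n+1}|^2]$ collapses to exactly the required $(1-\Delta t K_f(4\lambda^2+1/\lambda^2))^{-1}$ once the $\mathbb{E}[|\delta Y_n|^2]$ remainder has been absorbed. Any suboptimal application of Young's inequality would produce a constant strictly larger than $4\lambda^2+1/\lambda^2$, so the algebra here is what determines the sharp constraints \eqref{eq:LambdaConstraint1}.
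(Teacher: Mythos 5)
Your skeleton coincides with the paper's: $\delta X_n\equiv 0$, martingale representation of $\delta Y_{n+1}$ on each subinterval, identification of $\Delta t\,\delta Z_n$ and $\Delta t\,\delta\Psi_n$ via Lemma \ref{lemma:FirstEstimateAuxiliary}, the bounds $\Delta t\,\bE[|\delta Z_n|^2]\le\bE[\int_{t_n}^{t_{n+1}}|\delta Z_r|^2\,dr]$ and $\Delta t\,\bE[|\delta\Psi_n|^2]\le K_\nu\,\bE[\int\!\!\int|\delta U_r(z)|^2\nu(dz)dr]$, a one-step recursion, and iteration. Where you diverge is the decisive combination step, and that is exactly the step you leave unexecuted. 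The paper does \emph{not} square $\delta f_n$ against a $(\Delta t)^2$ prefactor; it expands $\bE[|\delta Y_{n+1}|^2]$ by orthogonality as $\bE[|\delta Y_n-\Delta t\,\delta f_n|^2]$ plus the two It\^o-isometry terms, drops $(\Delta t)^2|\delta f_n|^2$, and splits the cross term $-2\Delta t K_f\bE[|\delta Y_n|(|\delta Y_n|+|\delta Z_n|+|\delta\Psi_n|)]$ with weights $\lambda$ and $1/\lambda$. In that route the lower constraint $\lambda^2\ge(1\vee K_\nu)K_f$ is what makes the coefficients of $\bE[|\delta Z_n|^2]$ and $\bE[|\delta\Psi_n|^2]$ in \eqref{eq:BetterEstimate2} nonnegative (so they can be discarded), and the upper/lower constraints on $\lambda^2$ in \eqref{eq:LambdaConstraint1} are what keep $1-\Delta t K_f(3\lambda^2+\lambda^{-2})$ inside $(0,1)$. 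Your attribution of that constraint to "rendering the coefficient of $\bE[|\bE[\delta Y_{n+1}\mid\cF_n]|^2]$ non-positive" is wrong: in your setup that coefficient is governed by the Young parameter $c$, not by $\lambda$, which never enters your estimates except through the target.

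The concrete gap is the claim that the algebra "collapses to exactly" $(1-\Delta t K_f(4\lambda^2+\lambda^{-2}))^{-1}$. It does not. Your route produces a recursion factor of the form $\frac{3(1+1/c)\Delta t K_f^2(1\vee K_\nu)}{1-3(1+1/c)(\Delta t)^2K_f^2}$ (after discarding the conditional-expectation term), whose first-order coefficient is $3\kappa K_f^2+\kappa^{-1}+O(\Delta t)$ with $\kappa:=1\vee K_\nu$ at the natural choice $c=3\Delta t K_f^2\kappa$ — a constant of type $\Delta t K_f^2$, structurally different from the paper's $\Delta t K_f\lambda^2$. One can still rescue the stated conclusion, because for every $\lambda$ admissible under \eqref{eq:LambdaConstraint1} one has $K_f(4\lambda^2+\lambda^{-2})\ge 4\kappa K_f^2+\kappa^{-1}$, so your factor is dominated by the claimed one for $\Delta t$ small; but that domination argument is the entire content of the final step, it is nowhere in your proposal, and it is not the "exact cancellation" you describe. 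As written, the proof is incomplete precisely at the point you yourself identify as the principal difficulty. (Incidentally, the paper's own proof actually establishes the recursion with constant $3\lambda^2+\lambda^{-2}$, which is stronger than the $4\lambda^2+\lambda^{-2}$ appearing in \eqref{eq:DefinitionConstantF}; since $F_1$ is monotone in that constant, the stated bound follows, but you should not take the $4$ as a target to be hit exactly.)
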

 \begin{proof}
 For $n=0,...,{M}$ we use the following notation
 \begin{align*}
 \delta X_n := X^1_n -X^2_n, \quad  \delta Y_n := Y^1_n -Y^2_n, \quad  \delta Z_n := Z^1_n -Z^2_n, \quad  \delta \Psi_n := \Psi^1_n -\Psi^2_n.
\end{align*}
{Clearly, we have 
$ \delta X_n=0,$
as $X_0^1=X_0^2=x.$}
Moreover, we set
\begin{align*}
	\delta f_n:=& f(t_n,X^1_n,Y_n^1,Z_n^1,\Psi_n^1)-f(t_n,X^2_n,Y_n^2,Z_n^2,\Psi_n^2).
\end{align*}
With this notation we have {for $n=0,...,M-1$}
\begin{align}
\delta Z_{n}&= \frac{1}{\Delta t} \bE \left[ \delta Y_{n+1} \Delta W_n \vert \cF_n\right], \label{eq:DifferenceZ} \\
\delta \Psi_{n}&= \frac{1}{\Delta t} \bE \left[ \delta Y_{n+1} \int_{t_n}^{t_{n+1}}\int_{\bR^d} \widetilde{N} (\textnormal{d}r,\textnormal{d}z) \Big \vert \cF_n \right], \label{eq:DifferencePsi} \\
\delta Y_{n}&= \bE \left[ \delta Y_{n+1} \vert \cF_n\right]+\delta f_n \Delta t. \label{eq:DifferenceY} 
\end{align}
  {Fix $n=0,...,M-1.$}
Applying the martingale representation theorem{,} there exists $\bF$-predictable processes $(\delta Z_t)_{t \in [t_{  {n}},t_{  {n}+1}]}$ and $(\delta U_t)_{t \in [t_{  {n}},t_{  {n}+1}]}$,   {which are} integrable with respect to $W$ and $\widetilde{N}$, {respectively,} such that
\begin{align} \label{eq:ProofPredictableRepresentation}
\delta Y_{n+1} = \bE \left[\delta Y_{n+1} \vert \cF_n \right]+ \int_{t_n}^{t_{n+1}}(\delta Z_r)^{\top} \textnormal{d}W_r + \int_{t_n}^{t_{n+1}}\int_{\bR^d}\delta U_r(z) \widetilde{N}(\textnormal{d}r,\textnormal{d}z).
\end{align} 
{M}oreover, {by combining \eqref{eq:DifferenceY} and \eqref{eq:ProofPredictableRepresentation} it follows}
\begin{align}
&\bE\left[ \vert \delta Y_{n+1} \vert^2 \right]\nonumber \\
&=\bE\left[\bigg \vert  \delta Y_n {-} \delta f_n \Delta t+ \int_{t_n}^{t_{n+1}}(\delta Z_r)^{\top} \textnormal{d}W_r + \int_{t_n}^{t_{n+1}}\int_{\bR^d }\delta U_r(z) \widetilde{N}(\textnormal{d}r,\textnormal{d}z) \bigg \vert^2  \right]	\nonumber \\
&=\bE \left[ \vert \delta Y_n {-} \delta f_n \Delta t\vert^2 \right]+ \bE \left[\bigg \vert \int_{t_n}^{t_{n+1}}(\delta Z_r)^{\top} \textnormal{d}W_r + \int_{t_n}^{t_{n+1}}\int_{\bR^d }\delta U_r(z) \widetilde{N}(\textnormal{d}r,\textnormal{d}z)\bigg \vert^2  \right] \nonumber \\
& \quad +2 \bE \left[  \left( \delta Y_n {-} \delta f_n \Delta t\right) \left( \int_{t_n}^{t_{n+1}}(\delta Z_r)^{\top} \textnormal{d}W_r + \int_{t_n}^{t_{n+1}}\int_{\bR^d }\delta U_r(z) \widetilde{N}(\textnormal{d}r,\textnormal{d}z)\right)\right]\nonumber \\
&=\bE \left[ \vert \delta Y_n {-} \delta f_n \Delta t\vert^2 \right]+ \bE \left[\int_{t_n}^{t_{n+1}}\vert\delta Z_r\vert^2 \textnormal{d}r \right]+ \bE \left[\int_{t_n}^{t_{n+1}}\int_{\bR^d } \big \vert \delta U_r(z) \big \vert^2 \nu(\textnormal{d}z)\textnormal{d}r  \right] \nonumber \\
& \geq \bE \left[ \vert \delta Y_n \vert^2 \right]-2 \Delta t \bE \left[ \vert \delta f_n \vert \vert \delta Y_n\vert \right]  + \bE \left[\int_{t_n}^{t_{n+1}}\vert\delta Z_r\vert^2 \textnormal{d}r \right]+ \bE \left[\int_{t_n}^{t_{n+1}}\int_{\bR^d } \big \vert \delta U_r(z) \big \vert^2 \nu(\textnormal{d}z)\textnormal{d}r  \right]  \nonumber \\
& \geq \bE \left[ \vert \delta Y_n \vert^2 \right]-2 \Delta t K_f\bE \left[\vert \delta Y_n\vert (\vert \delta Y_n \vert+ \vert \delta Z_n \vert + \vert \delta \Psi_n \vert) \right]   \nonumber\\
& \quad + \bE \left[\int_{t_n}^{t_{n+1}}\vert\delta Z_r\vert^2 \textnormal{d}r \right] + \bE \left[\int_{t_n}^{t_{n+1}}\int_{\bR^d } \big \vert \delta U_r(z) \big \vert^2 \nu(\textnormal{d}z)\textnormal{d}r  \right].  \label{eq:ProofDifferencesY1}
\end{align}
By using that $-2ab \geq - a^2 - b^2$ it follows that 
\begin{align}
    &-2 \Delta t K_f\bE \left[\vert \delta Y_n\vert ( \vert \delta Y_n \vert+ \vert \delta Z_n \vert + \vert \delta \Psi_n \vert) \right] \nonumber \\
    &=-2 \Delta t K_f\bE \left[{\lambda \vert \delta Y_n\vert} \left(\frac{1}{\lambda} \vert \delta Y_n \vert+ \frac{1}{\lambda}\vert \delta Z_n \vert +\frac{1}{\lambda} \vert \delta \Psi_n \vert\right) \right] \nonumber \\
    & \geq - \Delta t K_f\left( {3}  \lambda^2 \bE \left[\vert \delta Y_n \vert^2 \right] +  \frac{1}{\lambda^2} \bE \left[\vert \delta Y_n \vert^2 \right] +   \frac{1}{\lambda^2} \bE \left[\vert \delta Z_n \vert^2 \right] + \frac{1}{\lambda^2} \bE \left[\vert \delta \Psi_n \vert^2 \right] \right). \label{eq:ImprovedApproach1}
\end{align}
By applying \eqref{eq:ProductDeltaJump} in Lemma \ref{lemma:FirstEstimateAuxiliary} to $\delta Y_{n+1}$ we get{,} with \eqref{eq:DifferencePsi} and \eqref{eq:ProofPredictableRepresentation}{,}
\begin{align*}
	\Delta t \delta {\Psi}_n &=   { \bE \left[ \delta Y_{n+1} \int_{t_n}^{t_{n+1}}\int_{\bR^d} \widetilde{N} (\textnormal{d}r,\textnormal{d}z) \Big \vert \cF_n \right]} =   \bE \left[ \int_{t_n}^{t_{n+1}} \int_{\bR^d} \delta U_r(z) N (\textnormal{d}r,\textnormal{d}z) \Big \vert \cF_n\right]\\
 &=\bE \left[ \int_{t_n}^{t_{n+1}} \int_{\bR^d} \delta U_r(z) \nu(\textnormal{d}z)\textnormal{d}r \Big \vert \cF_n\right],
\end{align*}
and thus
\begin{align}
\Delta t \bE \left[ \vert \delta \Psi_n \vert^2\right]
&= \frac{1}{\Delta t}  \bE \left[ \left(\bE \left[ \int_{t_n}^{t_{n+1}} \int_{\bR^d} \delta U_r(z)  \nu(\textnormal{d}z)\textnormal{d}r \vert \cF_n\right]\right)^2\right]\nonumber\\
&\leq  \frac{1}{\Delta t} \bE \left[ \left(\int_{t_n}^{t_{n+1}} \int_{\bR^d} \delta U_r(z)  \nu(\textnormal{d}z)\textnormal{d}r \right)^2\right]\nonumber\\
& \leq K_{\nu} \bE \left[ \left( \int_{t_n}^{t_{n+1}} \int_{\bR^d} \vert \delta U_r(z))\vert^2 \nu(\textnormal{d}z)\textnormal{d}r \right)\right]{.}\label{eq:WhatWeWantToProve1}
\end{align}
Similarly, by using \eqref{eq:ProductDeltaBrownianMotion} in Lemma \ref{lemma:FirstEstimateAuxiliary} together with \eqref{eq:DifferenceZ} and \eqref{eq:ProofPredictableRepresentation} we get
\begin{equation}\label{eq:SimilarHanLong}
\Delta t \bE \left[ \big \vert \delta  Z_n \big \vert^2 \right]   {\leq} \bE \left[ \int_{t_n}^{t_{n+1}} \vert Z_r\vert^2 \textnormal{d}r \right],
\end{equation}
  {for more details we refer to the proof of Lemma 1 in \cite{Han_Long_2020}.} Combining \eqref{eq:ProofDifferencesY1}, \eqref{eq:ImprovedApproach1}, \eqref{eq:WhatWeWantToProve1} and \eqref{eq:SimilarHanLong} it follows
\begin{align}
 \bE \left[ \big \vert \delta Y_{n+1} \big \vert^2\right] & \geq \left(1- \Delta t K_f \left({3} \lambda^2 + \frac{1}{\lambda^{2}}\right)\right)\bE \left[ \big \vert \delta Y_n \big \vert^2 \right] +\Delta t\left(1- K_f \frac{1}{\lambda^2}\right) \bE \left[ \big \vert \delta  Z_n \big \vert^2 \right]\nonumber \\
 & \quad + \Delta t\left(K_{\nu}^{-1}- K_f \frac{1}{\lambda^2}\right)  \bE \left[ \big \vert \delta \Psi_n \big \vert^2 \right]. \label{eq:BetterEstimate2}
\end{align}
We now choose $\lambda>0$ and $\Delta t$ sufficiently small such that 
\begin{align} 
    &1- \Delta t K_f \left({3} \lambda^2 + \frac{1}{\lambda^{2}}\right) \in (0,1) \quad \text{and} \quad 1- K_f \frac{1}{\lambda^2}\geq 0 \quad \text{and} \quad K_{\nu}^{-1}- K_f \frac{1}{\lambda^2} \geq 0. \label{eq:ConstraintNew1}
\end{align}
Note that \eqref{eq:ConstraintNew1} can be rewritten as
\begin{align*}
\lambda^2 \geq (1 \vee K_{\nu})K_f \vee \frac{1-\sqrt{1-{12} (\Delta t)^2 K_f^2}}{{6}  \Delta t K_f} \quad \text{and} \quad \lambda^2 \leq \frac{1+\sqrt{1-{12} (\Delta t)^2 K_f^2}}{{6} \Delta t K_f}.
\end{align*}
  {Then} \eqref{eq:BetterEstimate2} and \eqref{eq:ConstraintNew1}   {yields}
\begin{align*}
    \bE \left[ \big \vert \delta Y_n \big \vert^2\right] \leq \frac{1}{1-\Delta t K_f({3} \lambda ^2 + \frac{1}{\lambda^2})} \bE \left[\big  \vert \delta Y_{n+1} \big \vert^2\right]
\end{align*}
and   {by} induction for $0 \leq n \leq M$   {it follows}
\begin{align*}
     \bE \left[ \big \vert \delta Y_{n} \big \vert^2\right] \leq e^{{F}_1(M-n)\Delta t} \bE \left[\big  \vert \delta Y_M \big \vert^2\right]
\end{align*}
with 
$$
{F}_1:=-\frac{\ln(1-\Delta t K_f({3} \lambda ^2 + \frac{1}{\lambda^2}))}{\Delta t }.
$$
\end{proof}
{Combining the results of the first and second step, we now state an a posteriori error estimate for the finite activity case.}
 \begin{theorem} \label{theorem:ErrorEstiamteFiniteActivity}
Let Assumptions \ref{assump:ExistenceUniquenessFBSDE} \textbf{(A1)}-\textbf{(A3)}, \ref{assump:FiniteActivity} and \ref{assump:NeuralNetwork} hold. Let $(X,Y,Z,U)$ be the solution of the FBSDE in \eqref{eq:SDE}-\eqref{eq:BSDE}, $\Psi$ given in \eqref{eq:Integral} and $(\widetilde{X}_n,\widetilde{Y}_n,\widetilde{Z}_n,\widetilde{\Psi}_n)_{n=0,...,M}$ defined as in \eqref{eq:SchemeDeepSolverOneNeuralNetworkRewritten1}. Let $\overline{\lambda}>0$ and choose $\lambda {>0}$ and a sufficiently small $\Delta t$ such that \eqref{eq:LambdaConstraint1} holds 
{with 
$$
\lambda^2> K_f \left( 1 \vee K_{\nu} \right),
$$}then 
\begin{align} \label{eq:ErrorEstimateFBSDETheorem}
 &\max_{n=0,1,...,M-1} \mathbb{E} \left[ \sup_{t \in [t_n,t_{n+1}]} \big \vert X_t - \widetilde{X}_n \big \vert^2 \right]+ \max_{n=0,1,...,M-1} \mathbb{E} \left[ \sup_{t \in [t_n,t_{n+1}]} \big \vert Y_t - \widetilde{Y}_n \big \vert^2 \right] \nonumber \\
 & \quad  + \mathbb{E} \left[ \sum_{n=0}^{M-1}\int_{t_n}^{t_{n+1}} \big \vert Z_t - \widetilde{Z}_n \big \vert^2 \textnormal{d}t \right]+  \mathbb{E} \left[ \sum_{n=0}^{M-1}\int_{t_n}^{t_{n+1}}\big \vert \Psi_t - \widetilde{\Psi}_n \big \vert^2 \textnormal{d}t \right] \nonumber \\
 &\leq C \Delta t+ C(\lambda,\overline{\lambda}) \bE \left[\big \vert  \widetilde{Y}_M-g(\widetilde{X}_M)\big\vert^2 \right] \end{align}
with
$$
{C(\lambda,\overline{\lambda})}:= {2}\left(1+\overline{\lambda}^{-1}\right)  {e^{\overline{F}_1T} } \left[ {1}+ \frac{1+T K_f ({3} \lambda^2 + \frac{1}{\lambda^{2}})}{( 1 \wedge K_{\nu}^{-1})- K_f \frac{1}{\lambda^2}} \right]
$$
  {and
$$\overline{F}_1:= K_f \left({3} \lambda^2 + \frac{1}{\lambda^{2}}\right)+ \overline{\lambda}.$$}
 \end{theorem}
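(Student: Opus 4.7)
The strategy is to leverage the decomposition \eqref{eq:ErrorEstimateFBSDEGoal2}, which already reduces the total error to (i) the $O(\Delta t)$ discretization error supplied by Theorem \ref{theorem:ErrorEstimateSchemeDelong} and (ii) the mismatch between the neural network scheme $(\widetilde X_n,\widetilde Y_n,\widetilde Z_n,\widetilde \Psi_n)$ and the reference scheme $(\overline X_n,\overline Y_n,\overline Z_n,\overline \Psi_n)$ from \eqref{eq:SchemeDelong}. The key simplification I would exploit up front is that the forward recursions in \eqref{eq:SchemeDeepSolverOneNeuralNetworkRewritten1} and \eqref{eq:SchemeDelong} are literally identical with the same initial value $x$, so $\widetilde X_n=\overline X_n$ almost surely for every $n$. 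Consequently the $X$-differences in \eqref{eq:ErrorEstimateFBSDEGoal2} vanish, and, since $\overline Y_M=g(\overline X_M)=g(\widetilde X_M)$, the terminal mismatch in the backward part reduces to $\bE[|\widetilde Y_M-g(\widetilde X_M)|^2]$, precisely the training loss on the right of \eqref{eq:ErrorEstimateFBSDETheorem}.

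For the $\widetilde Y$-error, I would apply Proposition \ref{prop:SchemeWithoutTerminalDifference} to the pair $(\widetilde Y,\overline Y)$: this is legal because by Lemma \ref{lemma:SatisfiesScheme} both tuples satisfy \eqref{eq:SchemeDelongWithoutTerminalCondition}, differing only through their terminal data. The proposition then yields
\[
\max_{0\le n\le M}\bE\bigl[|\widetilde Y_n-\overline Y_n|^2\bigr]\le e^{F_1 T}\,\bE\bigl[|\widetilde Y_M-g(\widetilde X_M)|^2\bigr].
\]
The elementary estimate $-\ln(1-x)\le x+\overline\lambda\,\Delta t$, valid for $\Delta t$ small, promotes $F_1$ to $\overline F_1$. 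Combining with Theorem \ref{theorem:ErrorEstimateSchemeDelong} via the Young splitting $|Y_t-\widetilde Y_n|^2\le(1+\overline\lambda)|Y_t-\overline Y_n|^2+(1+\overline\lambda^{-1})|\overline Y_n-\widetilde Y_n|^2$ produces the expected $C\Delta t+(1+\overline\lambda^{-1})e^{\overline F_1 T}\bE[|\widetilde Y_M-g(\widetilde X_M)|^2]$ contribution.

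For the $\widetilde Z$ and $\widetilde\Psi$-errors, the central ingredient will be the one-step inequality \eqref{eq:BetterEstimate2} from the proof of Proposition \ref{prop:SchemeWithoutTerminalDifference}. Rearranging it to isolate the $Z$- and $\Psi$-terms on the left and summing over $n=0,\dots,M-1$ makes the leading $Y$-terms telescope to $\bE[|\widetilde Y_M-\overline Y_M|^2]-\bE[|\widetilde Y_0-\overline Y_0|^2]\le \bE[|\widetilde Y_M-g(\widetilde X_M)|^2]$, while the residual $\Delta t K_f(3\lambda^2+\lambda^{-2})\sum_n\bE[|\widetilde Y_n-\overline Y_n|^2]$ is dominated by $T K_f(3\lambda^2+\lambda^{-2})e^{\overline F_1 T}\bE[|\widetilde Y_M-g(\widetilde X_M)|^2]$ using the geometric bound already secured. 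Dividing by the positive coefficients $1-K_f/\lambda^2$ and $K_\nu^{-1}-K_f/\lambda^2$, both strictly positive thanks to the reinforced assumption $\lambda^2>K_f(1\vee K_\nu)$, gives the required control of $\Delta t\sum_n(\bE[|\widetilde Z_n-\overline Z_n|^2]+\bE[|\widetilde\Psi_n-\overline\Psi_n|^2])$, with the same factor $[(1\wedge K_\nu^{-1})-K_f/\lambda^2]^{-1}$ appearing in $C(\lambda,\overline\lambda)$.

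Assembling the four pieces into \eqref{eq:ErrorEstimateFBSDEGoal2} and collecting constants then yields \eqref{eq:ErrorEstimateFBSDETheorem}. I do not expect a deep obstacle here: the substantive content lives in Proposition \ref{prop:SchemeWithoutTerminalDifference}, Lemma \ref{lemma:SatisfiesScheme}, Theorem \ref{theorem:ErrorEstimateSchemeDelong} and the identification $\widetilde X_n=\overline X_n$. The only mildly delicate point is bookkeeping: the Young parameter $\overline\lambda$ must be threaded consistently through both the decomposition $|Y_t-\widetilde Y_n|^2\le(1+\overline\lambda)(\cdot)+(1+\overline\lambda^{-1})(\cdot)$ and the slack used to pass from $F_1$ to $\overline F_1$, so that the final constant collapses to the clean form $2(1+\overline\lambda^{-1})e^{\overline F_1 T}\bigl[1+\tfrac{1+TK_f(3\lambda^2+\lambda^{-2})}{(1\wedge K_\nu^{-1})-K_f/\lambda^2}\bigr]$ with a single $\overline F_1$-exponential in front.
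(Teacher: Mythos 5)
Your proposal is correct and follows essentially the same route as the paper's proof: the identification $\widetilde X_n=\overline X_n$, Lemma \ref{lemma:SatisfiesScheme} plus Proposition \ref{prop:SchemeWithoutTerminalDifference} for the $Y$-error, the rearranged one-step inequality \eqref{eq:BetterEstimate2} summed over $n$ for the $Z$- and $\Psi$-errors, the passage from $F_1$ to $\overline F_1$ for small $\Delta t$, and assembly via \eqref{eq:ErrorEstimateFBSDEGoal2} and Theorem \ref{theorem:ErrorEstimateSchemeDelong}. The only cosmetic difference is where the Young parameter $\overline\lambda$ enters (the paper places it in the terminal splitting $\widetilde Y_M-g(\overline X_M)$, where the companion term vanishes since $\delta X_M=0$), which does not affect the validity of the argument or the final constant.
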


 \begin{proof}
 Let $(\widetilde{X}_n,\widetilde{Y}_n,\widetilde{Z}_n,\widetilde{\Psi}_n)_{n=0,...,M}$ and $(\overline{X}_n,\overline{Y}_n,\overline{Z}_n,\overline{\Psi}_n)_{n=0,...,M}$ be defined as in \eqref{eq:SchemeDeepSolverOneNeuralNetworkRewritten1} and \eqref{eq:SchemeDelong}, respectively. By Lemma \ref{lemma:SatisfiesScheme} it follows that $(\widetilde{X}_n,\widetilde{Y}_n,\widetilde{Z}_n,\widetilde{\Psi}_n)_{n=0,...,M}$ satisfies the scheme in \eqref{eq:SchemeDelongWithoutTerminalCondition} and obviously $(\overline{X}_n,\overline{Y}_n,\overline{Z}_n,\overline{\Psi}_n)_{n=0,...,M}$ does as well. We now apply the results of Proposition \ref{prop:SchemeWithoutTerminalDifference} in order to estimate the expectation of the following differences
  \begin{align*}
 \delta X_n := \overline{X}_n -\widetilde{X}_n, \quad  \delta Y_n := \overline{Y}_n -\widetilde{Y}_n,\quad  \delta Z_n := \overline{Z}_n -\widetilde{Z}_n,\quad  \delta \Psi_n :=\overline{\Psi}_n -\widetilde{\Psi}_n.
\end{align*}
For {any} $\overline{\lambda}>0$   {it holds}
 	\begin{align}
 		\bE \left[\big \vert \delta Y_M\big \vert^2 \right]
&= \bE \left[\big \vert  \widetilde{Y}_M-g(\overline{X}_M)\big\vert^2 \right]\nonumber \\
 		 &\leq \left(1+\overline{\lambda}^{-1}\right) \bE \left[\big \vert  \widetilde{Y}_M-g(\widetilde{X}_M)\big\vert^2 \right]+ (K_g)^2 \left(1+\overline{\lambda}\right)\bE \left[\big \vert \delta   {{X}}_M\big\vert^2 \right] \nonumber\\
 		 &= \left(1+\overline{\lambda}^{-1}\right) \bE \left[\big \vert  \widetilde{Y}_M-g(\widetilde{X}_M)\big\vert^2 \right], \label{eq:TheoremSummary1.0}
 	\end{align}
 by using {that $ \delta X_{  {M}}=0$} in the last equality.  
    {F}or suitable $\lambda   {>0}$ and a sufficiently small $\Delta t,$   {satisfying \eqref{eq:LambdaConstraint1}, it follows by} \eqref{eq:DifferenceTwoSolutionsSchemeWithoutTerminal2}   {and \eqref{eq:TheoremSummary1.0}}
 \begin{align}
 \max_{n=0,...,M-1}	\bE\left[ \big \vert \delta Y_n \big \vert^2 \right]&\leq \bE \left[\big  \vert \delta Y_M \big \vert^2 \right] \max_{n=0,...,M-1} e^{F_1(M-n)\Delta t }\nonumber \\
& {\leq \left(1+\overline{\lambda}^{-1}\right)   e^{F_1T} \bE \left[\big \vert  \widetilde{Y}_M-g(\widetilde{X}_M)\big\vert^2 \right]},
\label{eq:ThoeremSummary1} \end{align}
  {as} $F_1$   {which is defined in \eqref{eq:DefinitionConstantF} is greater than zero.}
Moreover, by \eqref{eq:BetterEstimate2}   {it holds}
\begin{align}
&\Delta t\left( (1 \wedge K_{\nu}^{-1} )- K_f \frac{1}{\lambda^2}\right) \bE \left[ \big \vert \delta  Z_n \big  \vert^2 \right]+ \Delta t\left((1 \wedge K_{\nu}^{-1} )- K_f \frac{1}{\lambda^2}\right)  \bE \left[ \big \vert \delta \Psi_n \big\vert^2 \right] \nonumber \\
&\leq \Delta t\left(1- K_f \frac{1}{\lambda^2}\right) \bE \left[ \big \vert \delta  Z_n \big \vert^2 \right]+ \Delta t\left(K_{\nu}^{-1}- K_f \frac{1}{\lambda^2}\right)  \bE \left[\big \vert \delta \Psi_n \big \vert^2 \right] \nonumber \\
&\leq {\bE} \left[ \big \vert \delta Y_{n+1} \big \vert^2 \right] -  \left(1- \Delta t K_f \left({3} \lambda^2 + \frac{1}{\lambda^{2}}\right)\right)\bE \left[ \big \vert \delta Y_n \big \vert^2 \right]. \label{eq:NeededForSecondEstimate}
\end{align}
  {For $\lambda>0$ such that additionally} $ ( 1 \wedge K_{\nu}^{-1} )- K_f \frac{1}{\lambda^2}{>} 0,$ {i.e. $\lambda^2 > (1 \vee K_{\nu})K_f$,} it follows
\begin{align}
&\Delta t \bE \left[ \big \vert \delta Z_n \big \vert^2 \right] + \Delta t  \bE \left[ \big \vert \delta \Psi_n \big \vert^2 \right] \nonumber \\
&\leq \frac{1}{( 1 \wedge K_{\nu}^{-1} )- K_f \frac{1}{\lambda^2}} \left({\bE} \left[ \big \vert \delta Y_{n+1} \big \vert^2 \right] - \left(1- \Delta t K_f \left({3} \lambda^2 + \frac{1}{\lambda^{2}}\right)\right)\bE \left[ \big \vert \delta Y_n \big \vert^2 \right]\right). \nonumber 
\end{align}
By summing up we get
\begin{align}
& \sum_{n=0}^{M-1} \bE \left[ \big \vert \delta Z_n \big \vert^2\right] \Delta t + \sum_{n=0}^{M-1} \bE \left[ \big \vert \delta \Psi_n \big \vert^2\right] \Delta t \nonumber \\
& \leq  \frac{1}{( 1 \wedge K_{\nu}^{-1} )- K_f \frac{1}{\lambda^2}} \left(\sum_{n=0}^{M-1} \bE \left[ \big \vert \delta Y_{n+1} \big \vert^2\right]  -  \left(1- \Delta t K_f \left({3} \lambda^2 + \frac{1}{\lambda^{2}}\right)\right)\sum_{n=0}^{M-1}\bE \left[ \big \vert \delta Y_n \big \vert^2 \right]  \right) \nonumber \\
&\leq  \frac{1}{( 1 \wedge K_{\nu}^{-1} )- K_f \frac{1}{\lambda^2}}  \left( 1+ T K_f \left({3} \lambda^2 + \frac{1}{\lambda^{2}}\right)  \right) \max_{n=0,...,M}	\bE \left[ \big \vert \delta Y_{n} \big \vert^2 \right] \nonumber \\
& \leq   \frac{1}{( 1 \wedge K_{\nu}^{-1} )- K_f \frac{1}{\lambda^2}} \left( 1+T K_f \left({3} \lambda^2 + \frac{1}{\lambda^{2}}\right) \right) \left(1+\overline{\lambda}^{-1}\right)  {e^{F_1T} } \bE \left[\big \vert  \widetilde{Y}_M-g(\widetilde{X}_M)\big\vert^2 \right],\label{eq:ThoeremSummary2}
\end{align}
  {where we use} \eqref{eq:ThoeremSummary1} in the last inequality.
 Note by applying   {l'H\^{o}pital's rule it holds} 
 \begin{align}
\lim_{\Delta t \to 0} F_1 = \lim_{\Delta t \to 0} \frac{K_f \left({3} \lambda^2 + \frac{1}{\lambda^{2}}\right)}{1-\Delta t K_f \left({3} \lambda^2 + \frac{1}{\lambda^{2}}\right)} = K_f \left({3}\lambda^2 + \frac{1}{\lambda^{2}}\right),
 \end{align}
and as $F_1$ is increasing in $\Delta t$, it follows that for $\Delta t$ sufficiently small $F_1 \leq K_f \left({3} \lambda^2 + \frac{1}{\lambda^{2}}\right)+ a$ for any constant $a>0.$
Combining \eqref{eq:ThoeremSummary1}, \eqref{eq:ThoeremSummary2} with \eqref{eq:ErrorEstimateFBSDEGoal1}, \eqref{eq:ErrorEstimateFBSDEGoal2} yields
\begin{align}
 &\max_{n=0,1,...,M-1} \mathbb{E} \left[ \sup_{t \in [t_n,t_{n+1}]} \big \vert X_t - \widetilde{X}_n \big \vert^2 \right]+ \max_{n=0,1,...,M-1} \mathbb{E} \left[ \sup_{t \in [t_n,t_{n+1}]} \big \vert Y_t - \widetilde{Y}_n \big \vert^2 \right] \nonumber \\
 & \quad  + \mathbb{E} \left[ \sum_{n=0}^{M-1}\int_{t_n}^{t_{n+1}} \big \vert Z_t - \widetilde{Z}_n \big \vert^2 \textnormal{d}t \right]+  \mathbb{E} \left[ \sum_{n=0}^{M-1}\int_{t_n}^{t_{n+1}}\big \vert \Psi_t - \widetilde{\Psi}_n \big \vert^2 \textnormal{d}t \right] \nonumber \\ 
 &\leq  C \Delta t+
  2  \max_{n=0,1,...,M-1} \mathbb{E} \left[ \big \vert { \delta X_n} \big \vert^2 \right]+ 2\max_{n=0,1,...,M-1} \mathbb{E} \left[  \big \vert { \delta Y_n}\big \vert^2 \right] \nonumber \\
 & \quad  + 2 \Delta t  \sum_{n=0}^{M-1} \mathbb{E} \left[\big \vert { \delta Z_n}\big \vert^2\right] +  2 \Delta t  \sum_{n=0}^{M-1}\mathbb{E}\left[\big \vert { \delta \Psi_n} \big \vert^2\right]  \nonumber \\
 & \leq  C \Delta t+ 2\left(1+\overline{\lambda}^{-1}\right) e^{\overline{F}_1 T}  \bE \left[\big \vert  \widetilde{Y}_M-g(\widetilde{X}_M)\big\vert^2 \right]\nonumber \\
 & \quad + {2}\frac{1}{( 1 \wedge K_{\nu}^{-1})- K_f \frac{1}{\lambda^2}} \left( 1+T K_f \left({3} \lambda^2 + \frac{1}{\lambda^{2}}\right) \right) \left(1+\overline{\lambda}^{-1}\right)  {e^{\overline{F}_1T} }  \bE \left[\big \vert  \widetilde{Y}_M-g(\widetilde{X}_M)\big\vert^2 \right] \nonumber \\
 &=C \Delta t+ {C(\lambda,\overline{\lambda})}\bE \left[\big \vert  \widetilde{Y}_M-g(\widetilde{X}_M)\big\vert^2 \right].
 \end{align} 
 \end{proof}

\subsection{A posteriori error estimate for the infinite activity case} \label{sec:InfiniteActivityCase}
In this section we consider a forward process $X$ in \eqref{eq:SDE} with {possibly} infinitely many jumps, i.e. we drop Assumption \ref{assump:FiniteActivity}. To do so, we proceed as in Section 3.2 in \cite{gnoatto_patacc_picarelli}. 
{More specifically, we approximate $X$ with a jump diffusion process $X^{\epsilon}$ with finitely many jumps. By doing so the big jumps correspond to those of a compound Poisson process, while the small jumps will be approximated via the diffusion process by increasing the volatility. Such a procedure has also been applied among others in \cite{asmussen_rosinski_2001}, \cite{cohen_rosinski_2007} and \cite{jum_2015}.}
This means for $\epsilon \in (0,1]$ we define
\begin{align}
\nu^{\epsilon}(\textnormal{d}z)&:= \textbf{1}_{\lbrace \vert z \vert> \epsilon\rbrace}\nu (\textnormal{d}z), \nonumber \\
\nu_{\epsilon}(\textnormal{d}z)&:= \textbf{1}_{\lbrace \vert z \vert \leq \epsilon\rbrace}\nu (\textnormal{d}z), \nonumber
\end{align}
such that $\nu=\nu_{\epsilon}+\nu^{\epsilon}.$ We introduce the notation $K_{\nu^{\epsilon}}:= \nu^{\epsilon}(\mathbb{R}^d).$ \\
We denote the compensators of $N_{\epsilon}$ and $N^{\epsilon}$ by $\nu_{\epsilon}$ and $\nu^{\epsilon}$, respectively, which means that $N=N_{\epsilon}+ N^{\epsilon}$.
We introduce the quantity
$$
\Sigma_{\epsilon}:=\int_{\bR^d} zz^{\top}\nu_{\epsilon}(\textnormal{d}z)= \int_{\vert z \vert < \epsilon} zz^{\top}\nu(\textnormal{d}z),
$$
which takes values in the cone of positive semidefinite $d\times d$ matrices $S_+^d.$
For every $(t,x) \in [0,T] \times \bR^d,$ we approximate the solution $X^{t,x}$ of \eqref{eq:SDE} by the process $X^{\epsilon,t,x}$ which is given by
\begin{align} \label{eq:SDEApproximation}
	X_s^{\epsilon, t,x}=x + \int_t^s b(X_{r-}^{\epsilon,t,x}) \textnormal{d}r + \int_t^s \sigma_\epsilon(X_{r-}^{\epsilon,t,x} )^{\top}  \textnormal{d}W_r + \int_t^s \int_{\mathbb{R}^d} \gamma (X_{r-}^{\epsilon,t,x})z \widetilde{N}^{\epsilon}(\textnormal{d}r,\textnormal{d}z), \quad {s \in [t,T],}
    \end{align}
where $\widetilde{N}^{\epsilon}:=N^{\epsilon}(\textnormal{d}r, \textnormal{d}z)- \nu^{\epsilon}(\textnormal{d}z) \textnormal{d}r$ and the function ${\sigma}_\epsilon: \bR^d \to \bR^{d \times d}$ is defined by
\begin{equation*}
{\sigma_\epsilon}(x):= \sigma(x)+ \gamma(x)\sqrt{ \Sigma_{\epsilon}}  {,}
\end{equation*} 
where $\sqrt{\Sigma_{\epsilon}} $ is the square root of ${\Sigma_{\epsilon}} \in {{S}}_{+}^d$.
As in the previous section, we omit the dependence on $x$ in the notation, {set $t=0$} and {do not keep track of constants mentioning, time by time, only the quantities they depend on.}
We consider the following backward dynamics given by
\begin{align} \label{eq:BSDEApproximation}
	Y_t^{\epsilon}&=g(X_T^{\epsilon})+\int_t^T f\left(  {r},X_{r-}^{\epsilon}, Y_{r-}^{\epsilon}, Z_r^{\epsilon}, \int_{\mathbb{R}^d} U_r^{\epsilon}(z) \nu^{\epsilon} (\textnormal{d}z) \right)\textnormal{d}r - \int_t^T (Z_r^{\epsilon})^{\top}\textnormal{d}W_r  \nonumber \\
	& \quad - \int_t^T \int_{\mathbb{R}^d} U_r^{\epsilon}(z) \widetilde{N}^{\epsilon} (\textnormal{d}r,\textnormal{d}z), \quad {t \in [0,T]}
\end{align}
where the solution $(Y^{\epsilon}, Z^{\epsilon},U^{\epsilon})$ is an approximation of $(Y,Z,U)$ in \eqref{eq:BSDE}. As before, we introduce the process $\Psi^{\epsilon}=(\Psi_t^{\epsilon})_{t \in [0,T]}$ by
\begin{equation} \label{eq:IntegralApproximation}
\Psi_t^{\epsilon}:=\int_{\bR^d} U^{\epsilon}_t(z)\nu^{\epsilon}(\textnormal{d}z), \quad t \in [0,T]. 
\end{equation}
{The following a priori error estimates for the forward and backward approximation are derived  in \cite[Proposition 4.2 and 4.3]{gnoatto_patacc_picarelli}.\\

\begin{proposition}\label{prop:ErrorEstimatesApproximationAlessandroAthena}
Let Assumptions \ref{assump:ExistenceUniquenessFBSDE} \textbf{(A1)}, \textbf{(A3)} hold.
Let $(X,Y,Z,U)$ and $(X^{\epsilon},Y^{\epsilon},Z^{\epsilon},U^{\epsilon} )$ be the solution of the FBSDE \eqref{eq:SDE}-\eqref{eq:BSDE} and \eqref{eq:SDEApproximation}-\eqref{eq:BSDEApproximation}, respectively. Let $\Psi, \Psi^{\epsilon}$ be defined in \eqref{eq:Integral}, \eqref{eq:IntegralApproximation}, respectively. Then there exists {a} constant ${C}>0$ {that only depends on the data {of the FBSDE in \eqref{eq:SDE}-\eqref{eq:BSDE}}} such that
\begin{align}
    \mathbb{E}\left[\sup _{t \in[0, T]}\big \vert X_t-X_t^\epsilon\big \vert^2\right] + \bE \left[ \sup_{t \in [0,T]} \big \vert Y_t - Y_t^{ \epsilon} \big  \vert^2 \right]+ \bE \left[ \int_0^T \vert Z_t - Z_t^{\epsilon}\vert^2 \textnormal{d}t \right] \\
    \quad +\,  \bE \left[ \int_0^T \int_{\bR^d} \big \vert U_t(z)-U_t^{\epsilon}(z) \big \vert^2 \nu^{\epsilon}(\textnormal{d}z) \textnormal{d}t\right] \nonumber\leq {C}\left(1+ \vert x \vert^2 \right)\int_{\bR^d} \vert z \vert^2 \nu_{\epsilon} (\textnormal{d}z).
\end{align}
\end{proposition}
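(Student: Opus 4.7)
The plan is to prove the forward estimate first and then propagate it into the backward estimate. For the forward part, set $\Delta X := X - X^{\epsilon}$ and subtract \eqref{eq:SDEApproximation} from \eqref{eq:SDE}. Three qualitatively distinct contributions appear: Lipschitz drift/diffusion differences $b(X_{r-})-b(X_{r-}^{\epsilon})$ and $\sigma(X_{r-})-\sigma(X_{r-}^{\epsilon})$; the ``replacement'' Brownian correction $-\gamma(X_{r-}^{\epsilon})\sqrt{\Sigma_{\epsilon}}$ introduced in the diffusion of $X^{\epsilon}$; and the genuinely small-jump residual $\int_{|z|\leq\epsilon}\gamma(X_{r-})z\,\widetilde{N}(\textnormal{d}r,\textnormal{d}z)$, which only occurs in $X$. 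Taking $\mathbb{E}[\sup_{s\leq t}|\Delta X_s|^2]$, applying Burkholder--Davis--Gundy for the martingale parts together with the Lipschitz estimates on $b,\sigma,\gamma$, and using the identity $\operatorname{tr}(\Sigma_{\epsilon})=\int_{|z|\leq\epsilon}|z|^2\nu(\textnormal{d}z)$, both the ``replacement'' and ``residual'' terms yield contributions of the form $\mathbb{E}[|\gamma(X_{r-}^{\epsilon})|^2]\int_{|z|\leq\epsilon}|z|^2\nu(\textnormal{d}z)$. Combining with the standard $L^2$-moment bound $\mathbb{E}[\sup_t |X_t^{\epsilon}|^2]\leq C(1+|x|^2)$ and Gronwall's inequality yields the forward estimate.

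For the backward part, set $\Delta Y := Y - Y^{\epsilon}$, $\Delta Z := Z - Z^{\epsilon}$, and $\Delta U(z) := U(z) - U^{\epsilon}(z)$ on $\{|z|>\epsilon\}$. I would apply It\^{o}'s formula to $|\Delta Y_t|^2$ on $[t,T]$. The terminal contribution $\mathbb{E}[|g(X_T)-g(X_T^{\epsilon})|^2]$ is handled by the Lipschitz continuity of $g$ and the forward estimate. The driver difference must then be split via
\begin{equation*}
\int_{\mathbb{R}^d}U_r(z)\,\nu(\textnormal{d}z) = \int_{\mathbb{R}^d}U_r(z)\,\nu^{\epsilon}(\textnormal{d}z) + \int_{|z|\leq\epsilon}U_r(z)\,\nu(\textnormal{d}z),
\end{equation*}
so that Lipschitz continuity of $f$ bounds the driver difference by a ``BSDE feedback'' part in $|\Delta X|+|\Delta Y|+|\Delta Z|+\bigl|\int \Delta U\,\nu^{\epsilon}(\textnormal{d}z)\bigr|$ plus a ``small-jump remainder'' $\bigl|\int_{|z|\leq\epsilon}U_r(z)\,\nu(\textnormal{d}z)\bigr|$. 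A suitable Young's inequality absorbs $\|\Delta Z\|_{\bH^2}^2$ and $\|\Delta U\|_{\bH^2_N}^2$ on the left-hand side, after which Gronwall's lemma closes the argument.

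The main obstacle is to estimate the small-jump remainder $\int_{|z|\leq\epsilon}|U_r(z)|\,\nu(\textnormal{d}z)$ so that it produces exactly the factor $\int_{\mathbb{R}^d}|z|^2\nu_{\epsilon}(\textnormal{d}z)$ on the right-hand side, rather than a weaker quantity. Using the Feynman--Ka\v{c} representation $U_r(z)=u(r,X_{r-}+\Gamma(X_{r-},z))-u(r,X_{r-})$ from Theorem~\ref{theorem:LinkPIDE} together with Assumption~\ref{assump:ExistenceUniquenessFBSDE}\textbf{(A3)} (which forces $\Gamma(x,z)=\gamma(x)z$), the Lipschitz regularity of $u$ in $x$ implies the pointwise bound $|U_r(z)|\leq C|\gamma(X_{r-})||z|$. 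Squaring, applying Jensen (or Cauchy--Schwarz for the $\sigma$-finite measure $\nu_{\epsilon}$ when finite), and taking expectation using the moment bound on $\gamma(X_{r-})$, one recovers precisely the required $(1+|x|^2)\int_{\mathbb{R}^d}|z|^2\nu_{\epsilon}(\textnormal{d}z)$ factor. This pointwise estimate on $U$ is the crucial link that ensures the error bounds for $X^{\epsilon}$ and for $(Y^{\epsilon},Z^{\epsilon},U^{\epsilon})$ are of the same order in $\int_{\mathbb{R}^d}|z|^2\nu_{\epsilon}(\textnormal{d}z)$.
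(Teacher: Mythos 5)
The paper itself gives no proof of this proposition: it is imported verbatim from \cite{gnoatto_patacc_picarelli} (Propositions 4.2 and 4.3), so there is no in-text argument to compare against. Your reconstruction follows the standard route that such a proof must take, and most of it is sound: the three-way decomposition of $X-X^{\epsilon}$ (Lipschitz differences, the Gaussian replacement $\gamma(X^{\epsilon}_{r-})\sqrt{\Sigma_{\epsilon}}$, the small-jump residual), the identity $\operatorname{tr}(\Sigma_{\epsilon})=\int_{|z|\le\epsilon}|z|^2\nu(\textnormal{d}z)$, BDG plus Gr\"onwall for the forward part, and---crucially---the pointwise bound $|U_r(z)|\le C(1+|X_{r-}|)|z|$ obtained from the representation \eqref{eq:RelationU} and the Lipschitz regularity of $u$, which is exactly what converts the small-jump remainder into the factor $\int_{\bR^d}|z|^2\nu_{\epsilon}(\textnormal{d}z)$. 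That last observation is indeed the key point, and you have it.

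There is, however, one step in the backward estimate that does not close as written. After It\^o's formula the driver difference contributes, via the Lipschitz continuity of $f$ in its last (real-valued) argument, a term proportional to $\big|\int_{\bR^d}\Delta U_r(z)\,\nu^{\epsilon}(\textnormal{d}z)\big|$. To absorb this into the left-hand side term $\bE\int_t^T\int_{\bR^d}|\Delta U_r(z)|^2\nu^{\epsilon}(\textnormal{d}z)\textnormal{d}r$ you must pass from the $L^1(\nu^{\epsilon})$ to the $L^2(\nu^{\epsilon})$ norm, and Cauchy--Schwarz costs a factor $K_{\nu^{\epsilon}}=\nu(\{|z|>\epsilon\})$. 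In the infinite-activity case this factor diverges as $\epsilon\downarrow 0$, so the Young/Gr\"onwall constants you obtain depend on $\epsilon$ and blow up --- whereas the statement asserts a constant depending only on the data of the original FBSDE \eqref{eq:SDE}--\eqref{eq:BSDE}. (The same tension is already present in the well-posedness of \eqref{eq:BSDE} itself when the driver depends on the unweighted integral $\int U\,\nu$ and $\nu(\bR^d)=\infty$; the standard BSDE theory requires Lipschitz dependence on $U$ in the $L^2(\nu)$ norm, not on $\int U\,\nu$.) To repair this you need an additional ingredient, e.g.\ exploiting the Markovian representations $\Psi_t=\mathscr{J}u(t,X_{t-})$ and $\Psi^{\epsilon}_t=\mathscr{J}^{\epsilon}u^{\epsilon}(t,X^{\epsilon}_{t-})$ together with the regularity of $u$, or restricting to drivers with an $L^2(\nu)$-Lipschitz dependence; otherwise you should state the constant as $\epsilon$-dependent. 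You should flag and resolve this point explicitly, since as sketched the Gr\"onwall argument for $(\Delta Y,\Delta Z,\Delta U)$ does not yield the claimed $\epsilon$-uniform bound.
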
} 

The goal is to approximate $(X^{\epsilon},Y^{\epsilon},Z^{\epsilon},\Psi^{\epsilon} )$ by $(\widetilde{X}^{ \epsilon}_n,\widetilde{Y}^{\epsilon}_n,\widetilde{Z}^{\epsilon}_n,\widetilde{\Psi}^{\epsilon}_n)_{n=0,...,M}$, {which is defined by the following scheme, analogous to  \eqref{eq:SchemeDeepSolverOneNeuralNetworkRewritten1} for the FBSDE \eqref{eq:SDEApproximation}-\eqref{eq:BSDEApproximation} {for $n=0,...,M-1$}
\begin{align} \label{eq:SchemeDeepSolverOneNeuralNetwork_eps}
\begin{cases}
	\widetilde{X}^\epsilon_{n+1}&=\widetilde{X}^\epsilon_n +b(\widetilde{X}^\epsilon_n) \Delta t+ \sigma_\epsilon(\widetilde{X}^\epsilon_n)^{\top}\Delta W_n + \int_{t_n}^{t_{n+1}}\int_{\mathbb{R}^d} \gamma(\widetilde{X}^\epsilon_n) z \widetilde{N}^\epsilon(\textnormal{d}r,\textnormal{d}z), \\
	\widetilde{X}^\epsilon_0&=x,\\
	\widetilde{Z}^\epsilon_n&= \sigma_\epsilon(\widetilde{X}^\epsilon_n)^{\top} D_x \mathcal{U}_{n}^{\epsilon,\rho_n}(\widetilde{X}^\epsilon_n),\\
	\widetilde{\Psi}^\epsilon_n&= \int_{\bR^d}\left(\cU_n^{\epsilon,\rho_n}(\widetilde{X}^\epsilon_n +\gamma(\widetilde{X}^\epsilon_n) z )- \cU_n^{\epsilon,\rho_n}(\widetilde{X}^\epsilon_n)\right) \nu^\epsilon(\textnormal{d}z) \\
	\widetilde{Y}_{n+1}^{\epsilon}&= \widetilde{Y}_n^{\epsilon}- f\left( t_n, \widetilde{X}^\epsilon_n, \widetilde{Y}_n^{\epsilon}, \widetilde{Z}^{\epsilon}_n, \widetilde{\Psi}^{\epsilon}_n \right) \Delta t  +\widetilde{Z}_n^{\top} \Delta W_n   \\
	&\quad +\left( \cU_n^{\epsilon,\rho_n}\left(\widetilde{X}_n + \gamma(\widetilde{X}^\epsilon_n)\sum_{i=N^\epsilon([0,t_n], \mathbb{R}^d)+1}^{N^\epsilon([0,t_{n+1}], \mathbb{R}^d)}  z_i \right)- \cU_n^{\epsilon,\rho_n}(\widetilde{X}^\epsilon_n) \right)	 - \int_{t_n}^{t_{n+1}} \widetilde{\Psi}^\epsilon_n \textnormal{d}t,  \\
	\widetilde{Y}_0^{\epsilon}&=y.
\end{cases}	
\end{align}
As before, we set $\widetilde{Z}_M^{\epsilon}=\widetilde{\Psi}_M^{\epsilon}=0$ to simplify the notation.
}
In order to use the estimates in Section \ref{sec:LiteratureSchemesFBSDEsJumps} we need to verify that the FBSDE in \eqref{eq:SDEApproximation}-\eqref{eq:BSDEApproximation} satisfies the assumptions introduced in the previous section. 
As $\sigma$ and $\gamma$ are Lipschitz continuous functions it follows that $\sigma_\epsilon$ is also Lipschitz continuous with {the Lipschitz constant $K_{\sigma_\epsilon}:=K_\sigma+ K_\gamma|\sqrt{\Sigma_\epsilon}|.$} Obviously, all the other conditions in Assumption \ref{assump:ExistenceUniquenessFBSDE} \textbf{(A1)} are satisfied. Furthermore, by construction 
we consider the finite activity case and thus Assumption \ref{assump:FiniteActivity} holds. \\
In the following, we say that a generic constant $C_{\epsilon}\in \mathbb R$ only depends on the data of the FBSDE  \eqref{eq:SDEApproximation}-\eqref{eq:BSDEApproximation} if $C_\epsilon$ only depends on $b, \sigma^{\epsilon}, \Gamma, f, g, T, x$ and ${K_{\nu^{\epsilon}}}$.
\begin{proposition}
\label{prop:InfiniteActivityEstimate}
Let Assumptions \ref{assump:ExistenceUniquenessFBSDE} \textbf{(A1)}-\textbf{(A3)} and \ref{assump:NeuralNetwork} hold true. Let  $(X^{\epsilon},Y^{\epsilon},Z^{\epsilon}, {U}^{\epsilon} )$ be the solution {of the FBSDE} in \eqref{eq:SDEApproximation}-\eqref{eq:BSDEApproximation} and let $\Psi^{\epsilon}$ be defined by \eqref{eq:IntegralApproximation}. Let $(\widetilde{X}^{\epsilon}_n,\widetilde{Y}^{\epsilon}_n,\widetilde{Z}^{\epsilon}_n,\widetilde{\Psi}^{\epsilon}_n)_{n=0,...,M}$ be the approximation of $(X^{\epsilon},Y^{\epsilon},Z^{\epsilon},\Psi^{\epsilon} )$,  defined by \eqref{eq:SchemeDeepSolverOneNeuralNetwork_eps}. Let $\overline{\lambda}>0$ and choose $\lambda>0 $ and a sufficiently small $\Delta t$ such that
\begin{equation} \label{eq:LambdaConstraint1InfiniteActivity}
\lambda^2 >  (1 \vee K_{\nu^{\epsilon}})K_f \vee  \frac{1-\sqrt{1-{12} (\Delta t)^2 K_f^2}}{{6} \Delta t K_f}, \quad \text{and}\quad 
\lambda^2 \leq \frac{1+\sqrt{1-{12} (\Delta t)^2 K_f^2}}{{6} \Delta t K_f}.
\end{equation} 
Then it holds
\begin{align*}
 \max_{n=0,1,...,M-1} \mathbb{E} \left[ \sup_{t \in [t_n,t_{n+1}]} \big \vert X^\epsilon_t - \widetilde{X}_n^{\epsilon} \big \vert^2 \right] \leq C_\epsilon \Delta t 
\end{align*}
and
\begin{align}
&\max_{n=0,1,...,M-1} \mathbb{E} \left[ \sup_{t \in [t_n,t_{n+1}]} \big \vert Y^\epsilon_t - \widetilde{Y}_n^{\epsilon} \big \vert^2 \right]\nonumber   + \mathbb{E} \left[ \sum_{n=0}^{M-1}\int_{t_n}^{t_{n+1}} \big \vert Z^\epsilon_t - \widetilde{Z}_n^{\epsilon} \big \vert^2 \textnormal{d}t \right] +  \mathbb{E} \left[ \sum_{n=0}^{M-1}\int_{t_n}^{t_{n+1}}\big \vert \Psi^\epsilon_t - \widetilde{\Psi}_n^{\epsilon} \big \vert^2 \textnormal{d}t \right] \nonumber 
\\
&\leq C_\epsilon  \Delta t+ \overline{C}_\epsilon(\lambda,\overline{\lambda}) \bE \left[\big \vert  \widetilde{Y}_M^{\epsilon}-g(\widetilde{X}_M^{ \epsilon})\big\vert^2 \right],  \nonumber 
\end{align}
where $C_\epsilon>0$ denotes a constant only depending on the data of the FBSDE in \eqref{eq:SDEApproximation}-\eqref{eq:BSDEApproximation} and  
\begin{align}
\overline{C}_\epsilon(\lambda,\overline{\lambda}):= {2}\left(1+\overline{\lambda}^{-1}\right)  {e^{\overline{F}_1T} } \left[ {1}+ \frac{1+T K_f ({3} \lambda^2 + \frac{1}{\lambda^{2}})}{ \left ( 1 \wedge K_{\nu^\epsilon}^{-1} \right )- K_f \frac{1}{\lambda^2}} \right],\label{def:barCeps}
\end{align}
with $\overline{F}_1= K_f \left({3} \lambda^2 + \frac{1}{\lambda^{2}}\right)+ \overline{\lambda}$.
\end{proposition}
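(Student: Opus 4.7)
The strategy is to recognise that \eqref{eq:SDEApproximation}--\eqref{eq:BSDEApproximation} is itself a finite-activity jump-diffusion FBSDE, whose discretisation \eqref{eq:SchemeDeepSolverOneNeuralNetwork_eps} is literally the scheme \eqref{eq:SchemeDeepSolverOneNeuralNetworkRewritten1} applied with the L\'evy measure $\nu^{\epsilon}$ and the diffusion coefficient $\sigma_{\epsilon}$. The plan is therefore to verify that the hypotheses of Theorem \ref{theorem:ErrorEstiamteFiniteActivity} transfer to the truncated system, and then to invoke that theorem to conclude both displayed bounds at once.

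First, I would check that Assumption \ref{assump:ExistenceUniquenessFBSDE} is preserved for the truncated problem. Part \textbf{(A1)}: $b,f,g$ are unchanged; $\sigma_{\epsilon}=\sigma+\gamma\sqrt{\Sigma_{\epsilon}}$ is Lipschitz with constant $K_{\sigma_{\epsilon}}:=K_{\sigma}+K_{\gamma}|\sqrt{\Sigma_{\epsilon}}|$; the Lipschitz-in-$x$ integral bound for $\Gamma(x,z)=\gamma(x)z$ against $\nu^{\epsilon}$ is inherited from the bound against $\nu$ because $\nu^{\epsilon}\leq\nu$ pointwise as measures. Part \textbf{(A2)}: the $\tfrac12$-H\"older property of $f$ in $t$ is intrinsic to $f$, and the sign condition on $a(x,\xi;z)$ in \eqref{eq:AssumpGammaErrorEstiamte2} is pointwise in $z$ and hence unaffected by restricting to $\{|z|>\epsilon\}$. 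Part \textbf{(A3)} holds trivially since $\gamma$ is unchanged. Assumption \ref{assump:FiniteActivity} holds for $\nu^{\epsilon}$ by construction with $\nu^{\epsilon}(\mathbb{R}^d)=K_{\nu^{\epsilon}}<\infty$, and Assumption \ref{assump:NeuralNetwork} is a hypothesis on the chosen networks that I would simply carry over.

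Once the assumptions are in place I would apply Theorem \ref{theorem:ErrorEstiamteFiniteActivity} directly. The first claimed bound
$$\max_{n}\mathbb{E}\Big[\sup_{t\in[t_{n},t_{n+1}]}|X_{t}^{\epsilon}-\widetilde{X}_{n}^{\epsilon}|^{2}\Big]\leq C_{\epsilon}\Delta t$$
is the forward part of \eqref{eq:ErrorEstimateFBSDETheorem} and in fact already follows from Theorem \ref{theorem:ErrorEstimateSchemeDelong} since the forward discretisation in \eqref{eq:SchemeDeepSolverOneNeuralNetwork_eps} coincides with the reference Euler scheme of \eqref{eq:SchemeDelong} applied to \eqref{eq:SDEApproximation}. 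The second claimed bound is exactly \eqref{eq:ErrorEstimateFBSDETheorem} with $\nu$ replaced by $\nu^{\epsilon}$: the constraints \eqref{eq:LambdaConstraint1} become \eqref{eq:LambdaConstraint1InfiniteActivity}, and the constant $C(\lambda,\overline{\lambda})$ takes the form $\overline{C}_{\epsilon}(\lambda,\overline{\lambda})$ in \eqref{def:barCeps} after substituting $K_{\nu^{\epsilon}}$ for $K_{\nu}$.

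The only delicate point is bookkeeping of constants: $C_{\epsilon}$ picks up the dependence of $K_{\sigma_{\epsilon}}$ on $|\sqrt{\Sigma_{\epsilon}}|$ and of $K_{\nu^{\epsilon}}$ on $\epsilon$ (both of which generally blow up as $\epsilon\downarrow 0$ in the infinite-activity regime). This is however consistent with the statement, which permits $C_{\epsilon}$ to depend on the data of the \emph{truncated} FBSDE. I do not foresee a genuine obstacle: the analytic content is fully contained in Theorem \ref{theorem:ErrorEstiamteFiniteActivity}, and the present proposition is essentially its corollary once the truncated setting has been verified to satisfy the standing hypotheses.
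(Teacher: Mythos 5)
Your proposal is correct and takes essentially the same route as the paper: the paper's proof is a one-line application of Theorem \ref{theorem:ErrorEstiamteFiniteActivity} to the truncated FBSDE, with the verification that $\sigma_{\epsilon}$ remains Lipschitz (constant $K_{\sigma}+K_{\gamma}|\sqrt{\Sigma_{\epsilon}}|$) and that Assumptions \textbf{(A1)}--\textbf{(A3)} and \ref{assump:FiniteActivity} carry over done in the text immediately preceding the proposition, exactly as you do. Your observation that the separate forward bound $C_{\epsilon}\Delta t$ comes from the Euler-scheme estimate (since $\widetilde{X}^{\epsilon}_n=\overline{X}^{\epsilon}_n$) is also consistent with how the paper's decomposition \eqref{eq:ErrorEstimateFBSDEGoal2} yields the split statement.
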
 

\begin{proof}
The result follows by applying  Theorem \ref{theorem:ErrorEstiamteFiniteActivity}  to the truncated FBSDE \eqref{eq:SDEApproximation}-\eqref{eq:BSDEApproximation}. 
\end{proof}

We can now combine the results above in order to estimate the difference between the exact solution of the FBSDE \eqref{eq:SDE}-\eqref{eq:BSDE} and its approximation given by  \eqref{eq:SchemeDeepSolverOneNeuralNetwork_eps}.

 \begin{theorem} \label{teo:InfiniteActivityEstimate}
Let Assumptions \ref{assump:ExistenceUniquenessFBSDE} \textbf{(A1)}-\textbf{(A3)} and \ref{assump:NeuralNetwork} hold true. Let $(X,Y,Z,U)$ be the solution of the FBSDE \eqref{eq:SDE}-\eqref{eq:BSDE} and $\Psi$ be defined in \eqref{eq:Integral}. Let $(\widetilde{X}^{ \epsilon}_n,\widetilde{Y}^{\epsilon}_n,\widetilde{Z}^{\epsilon}_n,\widetilde{\Psi}^{\epsilon}_n)_{n=0,...,M}$  be the solution of \eqref{eq:SchemeDeepSolverOneNeuralNetwork_eps}. Let $\overline{\lambda}>0$ and choose $\lambda>0 $ and a sufficiently small $\Delta t$ such that \eqref{eq:LambdaConstraint1InfiniteActivity} holds. Then, 
 \begin{align}
     \max_{n=0,1,...,M-1} \mathbb{E} \left[ \sup_{t \in [t_n,t_{n+1}]} \big \vert X_t - \widetilde{X}_n^{\epsilon} \big \vert^2 \right] 
     \leq C\left( 1+\vert x \vert^2 \right) \int_{\bR^d} \vert z \vert^2 \nu_{\epsilon}(\textnormal{d}z)+C_\epsilon\Delta t, \nonumber
 \end{align}
 and
 \begin{align}
 &\max_{n=0,1,...,M-1} \mathbb{E} \left[ \sup_{t \in [t_n,t_{n+1}]} \big \vert Y_t - \widetilde{Y}_n^{\epsilon} \big \vert^2 \right] + \mathbb{E} \left[ \sum_{n=0}^{M-1}\int_{t_n}^{t_{n+1}} \big \vert Z_t - \widetilde{Z}_n^{\epsilon} \big \vert^2 \textnormal{d}t \right] +  \mathbb{E} \left[ \sum_{n=0}^{M-1}\int_{t_n}^{t_{n+1}}\big \vert \Psi_t - \widetilde{\Psi}_n^{\epsilon} \big \vert^2 \textnormal{d}t \right] \nonumber \\
 & \leq C_\epsilon\left(\Delta t + \int_{\bR^d} \vert z \vert^2 \nu_{\epsilon}(\textnormal{d}z)\right) + 2 \overline{C}_\epsilon(\lambda,\overline{\lambda}) \bE \left[\big \vert  \widetilde{Y}_M^{{\epsilon}}-g(\widetilde{X}_M^{{ \epsilon}})\big\vert^2 \right], \nonumber
 \end{align}
 where $\overline{C}_\epsilon$ is defined by \eqref{def:barCeps} and $C_\epsilon>0$ only depends on the data of the FBSDE in \eqref{eq:SDEApproximation}-\eqref{eq:BSDEApproximation}.
 \end{theorem}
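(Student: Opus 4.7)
The plan is a short triangle-inequality argument that inserts the truncated solution $(X^\epsilon, Y^\epsilon, Z^\epsilon, \Psi^\epsilon)$ between the true FBSDE solution and its deep-BSDE approximation, so that the two already-established error bounds combine additively. Concretely, for each of the four variables I would use $(a+b)^2\le 2a^2+2b^2$ to write, e.g.,
$$
|X_t-\widetilde X_n^\epsilon|^2\le 2|X_t-X_t^\epsilon|^2+2|X_t^\epsilon-\widetilde X_n^\epsilon|^2,
$$
and analogously for $Y$, $Z$ and $\Psi$. After taking the supremum over $t\in[t_n,t_{n+1}]$ (or integrating) and then expectations, the first term on the right is controlled by Proposition \ref{prop:ErrorEstimatesApproximationAlessandroAthena} (producing the $C(1+|x|^2)\int |z|^2\nu_\epsilon(dz)$ truncation error), while the second term is controlled by Proposition \ref{prop:InfiniteActivityEstimate} (producing the $C_\epsilon\Delta t+\overline C_\epsilon(\lambda,\overline\lambda)\,\mathbb E[|\widetilde Y_M^\epsilon-g(\widetilde X_M^\epsilon)|^2]$ discretization error). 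Summing over $n$ where appropriate and collecting constants into a single $C_\epsilon$ yields both bounds of the theorem.

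The only genuinely delicate step is the $\Psi$-term, because $\Psi_t=\int U_t(z)\nu(dz)$ is integrated against the full measure whereas $\Psi_t^\epsilon=\int U_t^\epsilon(z)\nu^\epsilon(dz)$ uses only $\nu^\epsilon$. I would therefore decompose
$$
\Psi_t-\Psi_t^\epsilon=\int_{\mathbb R^d}U_t(z)\,\nu_\epsilon(dz)+\int_{\mathbb R^d}\bigl(U_t(z)-U_t^\epsilon(z)\bigr)\,\nu^\epsilon(dz).
$$
The second integral is handled by Jensen's inequality against the finite measure $\nu^\epsilon$ (bounded by $K_{\nu^\epsilon}\int|U_t-U_t^\epsilon|^2\nu^\epsilon(dz)$) and then by Proposition \ref{prop:ErrorEstimatesApproximationAlessandroAthena}. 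For the first integral I would use the representation $U_t(z)=u(t,X_{t-}+\gamma(X_{t-})z)-u(t,X_{t-})$ from Theorem \ref{theorem:LinkPIDE} together with the Lipschitz bound \eqref{eq:LocallyHoelder} to obtain $|U_t(z)|\le K|\gamma(X_{t-})||z|$, and then use $\int_{|z|\le\epsilon}|z|^2\nu(dz)<\infty$ (plus the square-integrability of $X$) to bound it by a constant times $\int|z|^2\nu_\epsilon(dz)$, which is exactly the truncation rate already appearing in the statement.

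Once all four contributions are in place, nothing beyond grouping constants remains: the $2C_\epsilon\Delta t$ prefactors from the two propositions combine into a single $C_\epsilon\Delta t$, the $\nu_\epsilon$-contributions combine into a single $C_\epsilon\int|z|^2\nu_\epsilon(dz)$ (absorbing the prefactor $C(1+|x|^2)$ into the $\epsilon$-dependent constant, which is allowed since $C_\epsilon$ is permitted to depend on the data of the FBSDE \eqref{eq:SDEApproximation}-\eqref{eq:BSDEApproximation}), and the terminal-loss contribution is multiplied by $2\overline C_\epsilon(\lambda,\overline\lambda)$, as stated. I do not expect any analytic obstacle: the hard work is in the two propositions being invoked, and the present theorem is essentially their combination.
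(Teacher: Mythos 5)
Your proposal follows the paper's proof essentially step for step: insert the truncated solution $(X^\epsilon,Y^\epsilon,Z^\epsilon,\Psi^\epsilon)$ via $(a+b)^2\le 2a^2+2b^2$, control the truncation error by Proposition \ref{prop:ErrorEstimatesApproximationAlessandroAthena} and the discretization/training error by Proposition \ref{prop:InfiniteActivityEstimate}, and treat the $\Psi$-term separately through the decomposition $\Psi_t-\Psi_t^\epsilon=\int_{\mathbb{R}^d}U_t(z)\,\nu_\epsilon(dz)+\int_{\mathbb{R}^d}(U_t(z)-U_t^\epsilon(z))\,\nu^\epsilon(dz)$, handling the finite measure $\nu^\epsilon$ by Cauchy--Schwarz and Proposition \ref{prop:ErrorEstimatesApproximationAlessandroAthena}, and the small-jump part by the representation $U_t(z)=u(t,X_{t-}+\gamma(X_{t-})z)-u(t,X_{t-})$ from Theorem \ref{theorem:LinkPIDE}. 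This is exactly the route taken in the paper.

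The one place your sketch does not close as written is the small-jump term $\mathbb{E}\bigl[\int_0^T\bigl|\int_{\mathbb{R}^d}U_t(z)\,\nu_\epsilon(dz)\bigr|^2\,dt\bigr]$. If you first insert the pointwise bound $|U_t(z)|\le K|\gamma(X_{t-})||z|$ and then square the $\nu_\epsilon$-integral, you obtain $K^2|\gamma(X_{t-})|^2\bigl(\int_{|z|\le\epsilon}|z|\,\nu(dz)\bigr)^2$; in the infinite-variation case $\int_{|z|\le\epsilon}|z|\,\nu(dz)$ may be infinite, and in any case this quantity is not controlled by $\int_{\mathbb{R}^d}|z|^2\,\nu_\epsilon(dz)$. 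Cauchy--Schwarz against the total mass $\nu_\epsilon(\mathbb{R}^d)$ is also unavailable, since that mass is infinite precisely in the infinite-activity case you are trying to cover. The missing move is to pass the square \emph{inside} the $\nu_\epsilon$-integral first, i.e. to justify $\bigl|\int_{\mathbb{R}^d}U_t(z)\,\nu_\epsilon(dz)\bigr|^2\le \int_{\mathbb{R}^d}|U_t(z)|^2\,\nu_\epsilon(dz)$ (the paper invokes Jensen's inequality for $\sigma$-finite measures, Corollary 23.10 in \cite{schilling_book_2005}, at exactly this point), and only then apply the pointwise estimate $|U_t(z)|^2\le C(1+|X_{t-}|^2)|z|^2$ together with $\mathbb{E}[\sup_{t\in[0,T]}|X_t|^2]\le C(1+|x|^2)$ to arrive at $C(1+|x|^2)\int_{\mathbb{R}^d}|z|^2\,\nu_\epsilon(dz)$, as in \eqref{eq:DistanceInfiniteActivityProof2a}. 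With that single adjustment your argument coincides with the paper's; the remaining bookkeeping of constants is as you describe.
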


\begin{proof}
  By combining Propositions \ref{prop:ErrorEstimatesApproximationAlessandroAthena} and \ref{prop:InfiniteActivityEstimate}  it easily follows
    \begin{align}
    \max_{n=0,1,...,M-1} \mathbb{E} \left[ \sup_{t \in [t_n,t_{n+1}]} \big \vert X_t - \widetilde{X}_n^{\epsilon} \big \vert^2 \right] 
    & \leq  { C}\left( 1+\vert x \vert^2 \right) \int_{\bR^d} \vert z \vert^2 \nu_{\epsilon}(\textnormal{d}z)+ C_\epsilon \Delta t.\label{eq:DistanceInfiniteActivityX}
\end{align}
We  also have
 \begin{align}
    & \mathbb{E} \left[ \sum_{n=0}^{M-1}\int_{t_n}^{t_{n+1}}\big \vert \Psi_t - \widetilde{\Psi}_n^{\epsilon} \big \vert^2 \textnormal{d}t \right]\nonumber \\
    & \leq 2 \mathbb{E} \left[\int_{0}^{T}\Big \vert \int_{\bR^d} U_t(z) \nu(dz) - \int_{\bR^d} U_t^{\epsilon} (z) \nu^{\epsilon} (\textnormal{d}z)\Big \vert^2 \textnormal{d}t \right]+2 \mathbb{E} \left[ \sum_{n=0}^{M-1}\int_{t_n}^{t_{n+1}}\big \vert  \Psi_{t}^{\epsilon}-  \widetilde{\Psi}_n^{\epsilon} \big \vert^2 \textnormal{d}t \right] \nonumber \\
    & \leq 4 \mathbb{E} \left[\int_{0}^{T}\Big \vert \int_{\bR^d} \left(U_t(z)-U_t^{\epsilon}(z) \right) \nu^{\epsilon}(\textnormal{d}z)\Big \vert^2 \textnormal{d}t \right] + 4 \mathbb{E} \left[\int_{0}^{T}\Big \vert \int_{\bR^d} U_t(z)  \nu_{\epsilon}(\textnormal{d}z)\Big \vert^2 \textnormal{d}t \right]\nonumber \\
    & \quad +2 \mathbb{E} \left[ \sum_{n=0}^{M-1}\int_{t_n}^{t_{n+1}}\big \vert  \Psi_{t}^{\epsilon}-  \widetilde{\Psi}_n^{\epsilon} \big \vert^2 \textnormal{d}t \right]\nonumber \\
 & \leq 4 \mathbb{E} \left[\int_{0}^{T}\Big \vert \int_{\bR^d} \left(U_t(z)-U_t^{\epsilon}(z) \right) \nu^{\epsilon}(\textnormal{d}z)\Big \vert^2 \textnormal{d}t \right] + 4 \mathbb{E} \left[\int_{0}^{T} \int_{\bR^d} \vert U_t(z)\vert^2  \nu_{\epsilon}(\textnormal{d}z) \textnormal{d}t \right]\nonumber \\
    & \quad +2 \mathbb{E} \left[ \sum_{n=0}^{M-1}\int_{t_n}^{t_{n+1}}\big \vert  \Psi_{t}^{\epsilon}-  \widetilde{\Psi}_n^{\epsilon} \big \vert^2 \textnormal{d}t \right],\label{eq:DistanceInfiniteActivityProof2}
 \end{align}
 by using Jensen's inequality for $\sigma$-finite measures, see Corollary 23.10 in \cite{schilling_book_2005}, in the last inequality. 
 {By Theorem \ref{theorem:LinkPIDE} and \eqref{eq:EstimatesRegularityPIDE} for $t \in [0,T]$ and $z \in \bR^d,$ $U_t(z)$ has the following representation}
 \begin{align*}
    U_t(z)=u(t,X_{t-}+ \gamma(X_{t-})z)-u(t,X_{t-}),
 \end{align*}    
 where $u$ is Lipschitz continuous with $\vert u(t,x) \vert \leq C(1+\vert x \vert )$, for some constant $C>0$. \\
 Thus, we get
\begin{align} 
\mathbb{E} \left[\int_{0}^{T} \int_{\bR^d} \vert U_t(z)\vert^2  \nu_{\epsilon}(\textnormal{d}z) \textnormal{d}t \right]
& \leq C  \mathbb{E} \left[\int_{0}^{T} \int_{\bR^d} (1+ \vert X_{t-}\vert^2 ) \vert z \vert^2  \nu_{\epsilon}(\textnormal{d}z) \textnormal{d}t \right] \nonumber \\
& \leq C \int_{\bR^d} \vert z \vert^2 \nu_{\epsilon}(\textnormal{d}z) \bE \left[ \int_0^T \left( 1+ \sup_{s \in [0,T]} \vert X_{s-} \vert^2  \right)\textnormal{d}t \right] \nonumber \\
& \leq C (1+ \vert x \vert^2)\int_{\bR^d} \vert z \vert^2 \nu_{\epsilon}(\textnormal{d}z)  \label{eq:DistanceInfiniteActivityProof2a},
\end{align}
where we use Theorem 4.1.1 in \cite{delong}   {in the last inequality}.
By the Cauchy-Schwarz inequality we also have
\begin{align}
    \mathbb{E} \left[\int_{0}^{T}\Big \vert \int_{\bR^d} \left(U_t(z)-U_t^{\epsilon}(z) \right) \nu^{\epsilon}(\textnormal{d}z)\Big \vert^2 \textnormal{d}t \right]\leq T K_{\nu^\epsilon} \mathbb{E} \left[\int_{0}^{T} \int_{\bR^d}\big \vert  U_t(z)-U_t^{\epsilon}(z) \big \vert^2 \nu^{\epsilon}(\textnormal{d}z)\textnormal{d}t \right]. \label{eq:DistanceInfiniteActivityProof4}
\end{align}
Thus, by   {putting together} \eqref{eq:DistanceInfiniteActivityProof2}, \eqref{eq:DistanceInfiniteActivityProof2a}, \eqref{eq:DistanceInfiniteActivityProof4},  it follows 
\begin{align}
& \mathbb{E} \left[ \sum_{n=0}^{M-1}\int_{t_n}^{t_{n+1}}\big \vert \Psi_t - \widetilde{\Psi}_n^{\epsilon} \big \vert^2 \textnormal{d}t \right] \nonumber \\
& \leq 4 T K_{\nu^\epsilon} \mathbb{E} \left[\int_{0}^{T} \int_{\bR^d}\big \vert  U_t(z)-U_t^{\epsilon}(z) \big \vert^2 \nu^{\epsilon}(\textnormal{d}z)\textnormal{d}t \right] + 4 C (1+ \vert x \vert^2)\int_{\bR^d} \vert z \vert^2 \nu_{\epsilon}(\textnormal{d}z) \nonumber \\
& \quad + 2 \mathbb{E} \left[ \sum_{n=0}^{M-1}\int_{t_n}^{t_{n+1}}\big \vert  \Psi_{t}^{\epsilon}-  \widetilde{\Psi}_n^{\epsilon} \big \vert^2 \textnormal{d}t \right]\nonumber 
\end{align}
and then  
\begin{align}
&\max_{n=0,1,...,M-1} \mathbb{E} \left[ \sup_{t \in [t_n,t_{n+1}]} \big \vert Y_t - \widetilde{Y}_n^{\epsilon} \big \vert^2 \right] + \mathbb{E} \left[ \sum_{n=0}^{M-1}\int_{t_n}^{t_{n+1}} \big \vert Z_t - \widetilde{Z}_n^{\epsilon} \big \vert^2 \textnormal{d}t \right] + \mathbb{E} \left[ \sum_{n=0}^{M-1}\int_{t_n}^{t_{n+1}}\big \vert \Psi_t - \widetilde{\Psi}_n^{\epsilon} \big \vert^2 \textnormal{d}t \right]\nonumber \\
& \leq 2 \mathbb{E} \left[ \sup_{  {[}t \in 0,T]} \big \vert Y_t - {Y}_t^{ \epsilon} \big \vert^2 \right]  + 2 \bE \left[\int_0^T \vert Z_t- Z_t^{\epsilon} \vert^2 \textnormal{d}t \right] + 4 T K_{\nu^\epsilon} \mathbb{E} \left[\int_{0}^{T} \int_{\bR^d}\big \vert  U_t(z)-U_t^{\epsilon}(z) \big \vert^2 \nu^{\epsilon}(\textnormal{d}z)\textnormal{d}t \right]\nonumber \\
& \quad + \max_{n=0,1,...,M-1} 2 \mathbb{E} \left[ \sup_{t \in [t_n,t_{n+1}]} \big \vert Y_t^{\epsilon} - \widetilde{Y}_n^{\epsilon} \big \vert^2 \right] + 2 \mathbb{E} \left[ \sum_{n=0}^{M-1}\int_{t_n}^{t_{n+1}} \big \vert Z_t^{\epsilon} - \widetilde{Z}_n^{\epsilon} \big \vert^2 \textnormal{d}t \right]   \nonumber\\
& \quad + 2 \mathbb{E} \left[ \sum_{n=0}^{M-1}\int_{t_n}^{t_{n+1}}\big \vert  \Psi_{t}^{\epsilon}-  \widetilde{\Psi}_n^{\epsilon} \big \vert^2 \textnormal{d}t \right] + 4 C (1+ \vert x \vert^2)\int_{\bR^d} \vert z \vert^2 \nu_{\epsilon}(\textnormal{d}z)\nonumber \\
& \leq C_\epsilon \Delta t+ 2 \overline{C}_\epsilon(\lambda,\overline{\lambda}) \bE \left[\big \vert  \widetilde{Y}_M^{\epsilon}-g(\widetilde{X}_M^{ \epsilon})\big\vert^2 \right] + C_\epsilon\left( 1+\vert x \vert^2 \right) \int_{\bR^d} \vert z \vert^2 \nu_{\epsilon}(\textnormal{d}z),\nonumber 
\end{align}
where the last inequality is obtained by applying Propositions \ref{prop:ErrorEstimatesApproximationAlessandroAthena} and \ref{prop:InfiniteActivityEstimate}, and $C_\epsilon>0$.
\end{proof}

\section{A priori estimates}\label{sec:aPrioriEstimates}
\subsection{Finite activity case}
{As for the a posteriori estimate,} we first consider the finite activity case. We now aim to find an estimate for the term $$\mathbb{E} \left[ \big \vert \widetilde{Y}_M - g(\widetilde{X}_M) \big \vert^2 \right],$$
where $\widetilde{X}_M,\widetilde{Y}_M$ are defined in  \eqref{eq:SchemeDeepSolverOneNeuralNetworkRewritten1}.
In particular, under Assumption \ref{assump:FiniteActivity} the following result regarding the path regularity of the FBSDE in \eqref{eq:SDE}-\eqref{eq:BSDE} is derived in Theorem 2.1 (i) and (ii) in \cite{bouchard_elie_2008}.
\begin{theorem} \label{theorem:Bouchard_Elie_Path_regularity}
 Let Assumptions \ref{assump:ExistenceUniquenessFBSDE} \textbf{(A1)}, \textbf{(A2)} and \ref{assump:FiniteActivity} hold. Let $(X,Y,Z,U) \in \bS_{[0,T]}^2(\bR^d) \times \bS_{[0,T]}^2(\bR) \times \bH_{[0,T]}^2(\bR^d) \times \bH_{[0,T],N}^2(\bR) $ be the unique solution to the FBSDE in \eqref{eq:SDE}-\eqref{eq:BSDE},   {and $\Psi$ given in \eqref{eq:Integral}.} Then the following path regularity holds
 \begin{align}
    & \max_{n=0,...,M-1} \bE \left[ \sup_{t \in [t_n, t_{n+1}]} \big \vert X_t - \widetilde{\widetilde{X}}_t \big\vert^2\right]+ \max_{n=0,...,M-1} \bE \left[ \sup_{t \in [t_n, t_{n+1}]} \big\vert Y_t - \widetilde{\widetilde{Y}}_t \big\vert^2\right] \nonumber \\
     & \quad +   {\mathbb{E} \left[ \int_0^T  \big \vert {Z_t} - \widetilde{\widetilde{Z}}_t\big \vert^2 \textnormal{d}t\right]} +   {\mathbb{E} \left[ \int_0^T  \big \vert \Psi_t - \widetilde{\widetilde{\Psi}}_t\big \vert^2 \textnormal{d}t\right]} \leq C \Delta t, \label{eq:PathregularityEstimate}
 \end{align}
 where $C$ is a constant only depending on the data of the FBSDE in \eqref{eq:SDE}-\eqref{eq:BSDE} and the processes $\widetilde{\widetilde{X}}, \widetilde{\widetilde{Y}}, \widetilde{\widetilde{Z}},\widetilde{\widetilde{\Psi}}$ are defined on $[t_n,t_{n+1})$ by
 \begin{align} \label{eq:ProcessesBouchardElie}
 \widetilde{\widetilde{X}}_t:= X_n, \quad  \widetilde{\widetilde{Y}}_t:= Y_n, \quad \widetilde{\widetilde{Z}}_t:=\frac{1}{\Delta t}   \bE \left[ \int_{t_n}^{t_{n+1}} Z_s \textnormal{d}s \Big \vert \cF_{n}\right], \quad \widetilde{\widetilde{\Psi}}_t:=\frac{1}{\Delta t} \bE \left[ \int_{t_n}^{t_{n+1}} \Psi_s \textnormal{d}s \Big \vert \cF_{n}\right]
 \end{align}
 with $X_n:=X_{t_n}$ and $Y_n:=Y_{t_n}$ for $n=0,...,  {M-1}.$
\end{theorem}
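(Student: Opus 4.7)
Since the statement is attributed to Theorem 2.1(i)-(ii) of \cite{bouchard_elie_2008}, the shortest route is to cite the reference after checking that its assumptions are in place (Lipschitz/H\"older data in \textbf{(A1)}--\textbf{(A2)}, finite activity in \ref{assump:FiniteActivity}). Nevertheless, here is the plan I would follow to derive the bound directly.

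\textbf{Step 1 (Forward part).} For $t\in[t_n,t_{n+1}]$ I would write
\[
X_t-X_{t_n}=\int_{t_n}^t b(X_{s-})\,ds+\int_{t_n}^t\sigma(X_{s-})^{\top}dW_s+\int_{t_n}^t\!\int_{\mathbb{R}^d}\gamma(X_{s-})z\,\widetilde N(ds,dz),
\]
and apply the Burkholder--Davis--Gundy inequality together with the Lipschitz bounds on $b,\sigma,\gamma$ (cf.\ Remark \ref{remark:Gamma}), Assumption \ref{assump:FiniteActivity}, and the standard $L^2$ moment estimate $\mathbb{E}[\sup_{t\in[0,T]}|X_t|^2]\le C(1+|x|^2)$ (Theorem 4.1.1 in \cite{delong}). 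This yields $\mathbb{E}[\sup_{t\in[t_n,t_{n+1}]}|X_t-X_{t_n}|^2]\le C\Delta t$, which is exactly the forward piece of \eqref{eq:PathregularityEstimate}.

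\textbf{Step 2 (Backward part for $Y$).} Using the Feynman--Kac identification $Y_t=u(t,X_t)$ from Theorem \ref{theorem:LinkPIDE}, I would invoke the local H\"older bound \eqref{eq:LocallyHoelder}:
\[
|Y_t-Y_{t_n}|^2=|u(t,X_t)-u(t_n,X_{t_n})|^2\le C\bigl(|X_t-X_{t_n}|^2+(1+|X_t|^2\vee|X_{t_n}|^2)\,|t-t_n|\bigr).
\]
Taking $\sup_{t\in[t_n,t_{n+1}]}$ and expectation, then combining with Step 1 and the $L^2$ bound on $\sup_t|X_t|$, gives the corresponding $C\Delta t$ estimate for $Y$.

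\textbf{Step 3 (Path regularity of $Z$ and $\Psi$).} This is the main obstacle, and where Bouchard--Elie's argument is more than a soft Itô calculation. The key observation is that $\widetilde{\widetilde Z}_t$ and $\widetilde{\widetilde\Psi}_t$ defined in \eqref{eq:ProcessesBouchardElie} are, on each $[t_n,t_{n+1})$, the $L^2(\Omega\times[t_n,t_{n+1}])$ orthogonal projections of $Z$ and $\Psi$ onto $\mathcal F_{t_n}$-measurable constants-in-time. Hence for any $\mathcal F_{t_n}$-measurable $\hat Z_n,\hat\Psi_n$,
\[
\mathbb{E}\int_{t_n}^{t_{n+1}}\!|Z_s-\widetilde{\widetilde Z}_s|^2ds\le \mathbb{E}\int_{t_n}^{t_{n+1}}\!|Z_s-\hat Z_n|^2ds,\qquad \text{similarly for }\Psi.
\]
Choosing $\hat Z_n=\sigma(X_{t_n})^{\top}D_xu(t_n,X_{t_n})$ and $\hat\Psi_n=\int_{\mathbb{R}^d}(u(t_n,X_{t_n}+\Gamma(X_{t_n},z))-u(t_n,X_{t_n}))\,\nu(dz)$, I would use the representations of $Z,\Psi$ from Theorem \ref{theorem:LinkPIDE} and reduce everything to the joint regularity of the map $s\mapsto (s,X_{s-})$, which Steps 1--2 already control. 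The subtle point is that a naive bound requires time-regularity of $D_xu$, which is \emph{not} stated; the Bouchard--Elie trick is to avoid this by applying It\^o's formula to $|Y_s-Y_{t_n}|^2$ on $[t_n,t_{n+1}]$, isolating $\mathbb{E}\int_{t_n}^{t_{n+1}}(|Z_s|^2+\int|U_s(z)|^2\nu(dz))ds$, and then combining with the $L^2$-projection inequality above together with the Step 2 bound on $\mathbb{E}|Y_s-Y_{t_n}|^2$ and the Lipschitz property of $f$. Summing over $n=0,\dots,M-1$ and using $\sum_n\Delta t=T$ produces the desired $C\Delta t$ bound without any regularity of $D_xu$ in time. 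This is the delicate step I would devote most effort to; the finite activity Assumption \ref{assump:FiniteActivity} is used here to control the $\Psi$-term via a Cauchy--Schwarz bound $|\Psi_t|^2\le K_\nu\int|U_t(z)|^2\nu(dz)$, paralleling \eqref{eq:WhatWeWantToProve1}.
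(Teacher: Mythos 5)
The paper offers no proof of this theorem at all: it is imported directly from Theorem 2.1(i)--(ii) of Bouchard and Elie (2008), so your primary route --- check that Assumptions \textbf{(A1)}, \textbf{(A2)} and \ref{assump:FiniteActivity} put you in the setting of that reference and then cite it --- is exactly what the paper does. Your Steps 1--3 are a reasonable reconstruction of the cited argument (in particular the observation that $\widetilde{\widetilde{Z}}$ and $\widetilde{\widetilde{\Psi}}$ are the $L^2$-best piecewise-constant $\cF_{t_n}$-measurable approximations, which reduces the problem to comparing $Z_s,\Psi_s$ with any convenient $\cF_{t_n}$-measurable candidate, is the heart of the Bouchard--Elie proof), but since the result is used here as a black box none of that machinery needs to be reproduced.
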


As in the previous sections, we omit the dependence on $(t,x)$ in the notation and do not keep track of constants mentioning, time by time, only the quantities they depend on. In particular, as before $C$ denotes a constant depending only on the data of the FBSDE in \eqref{eq:SDE}-\eqref{eq:BSDE} and $C(\cdot)$ is a constant depending on additional arguments, which are specified in the bracket.

\begin{proposition} \label{LemmaSecondEstimateAuxiliary}
Let Assumptions \ref{assump:ExistenceUniquenessFBSDE} \textbf{(A1)}-\textbf{(A3)}, \ref{assump:FiniteActivity} and \ref{assump:NeuralNetwork} hold. Let $(\widetilde{X}_n, \widetilde{Y}_n, \widetilde{Z}_n, \widetilde{\Psi}_n)_{ n=0,..., M}$ be defined in \eqref{eq:SchemeDeepSolverOneNeuralNetworkRewritten1} and   {$ \widetilde{\widetilde{Z}}_{n}:= \widetilde{\widetilde{Z}}_{t_n}$ and $ \widetilde{\widetilde{\Psi}}_{n}:= \widetilde{\widetilde{\Psi}}_{t_n}$ defined in \eqref{eq:ProcessesBouchardElie}}. {Given $\lambda_3>0$ there exists a constant $C(\lambda_3)$ such that for sufficiently small $\Delta t$ it holds
  \begin{align*}
   \mathbb{E}\left[ \big  \vert g(\widetilde{X}_M)-\widetilde{Y}_M \big \vert^2 \right] &\leq   {\bar H} (1+\lambda_3) \Delta t \sum_{n=0}^{M-1}\left(\bE \left[ \big \vert  \widetilde{\widetilde{Z}}_{n}- \widetilde{Z}_{n} \big \vert^2 \right] + \bE \left[ \big \vert  \widetilde{\widetilde{\Psi}}_{n}- \widetilde{\Psi}_{n} \big \vert^2 \right]\right) \nonumber \\
& \quad + (1+K_g)^2C(\lambda_3) \left(\big \vert Y_0-y \big \vert^2 +  \Delta t+ \textnormal{Error}^{\rho}\right),
  \end{align*}}
where 
\begin{align}
\textnormal{Error}^{\rho}&:=\max_{n=0,...,M-1}\bigg(\bE \left[   \int_{\bR^d}\left(  u(t_n, \widetilde{X}_n + \gamma(\widetilde{X}_n)z) -\cU_n^{\rho_n}(\widetilde{X}_n+\gamma(\widetilde{X}_n)z)\right)^2\nu(\textnormal{d}z)\right] \nonumber \\
& \quad \quad + K_{\nu} \bE \left[\left(  u(t_n, \widetilde{X}_n) -\cU_n^{\rho_n}(\widetilde{X}_n)\right)^2\right]\bigg) \label{eq:DefiError}
\end{align}
and $  {\bar H} :=\min_{x \in \mathbb{R}_+} H(x)$ for 
\begin{equation} \label{eq:DefinitionFunctionH}
H(x):=(1+K_g)^2\left(1 + 5(K_f)^2 {x}^{-1} \right) e^{T\left( x+5   {\bar K} ^2 x^{-1}+  {\bar K} ^2 \max \lbrace{ x^{-1} + 1+ \int_{\bR^d} \vert z \vert^2 \nu(\textnormal{d}z ), 5 x^{-1} \rbrace} \right)}
\end{equation} 
 with   {$  {\bar K} :=\max\lbrace { K_b,K_{\sigma},K_{\gamma}, K_f, K_{\nu} \rbrace}$.} 
\end{proposition}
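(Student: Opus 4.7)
The plan is to establish a forward one-step recursion on $\delta Y_n := Y_{t_n}-\widetilde Y_n$, iterate it by the discrete Gronwall lemma from $\delta Y_0 = Y_0-y$ up to $\delta Y_M$, and then transfer the bound on $\bE|\delta Y_M|^2$ to $\bE|g(\widetilde X_M)-\widetilde Y_M|^2$ via the Lipschitz continuity of $g$. The terminal transfer is by Young's inequality with parameter $\lambda_3>0$: writing $g(\widetilde X_M)-\widetilde Y_M=(g(\widetilde X_M)-g(X_T))+\delta Y_M$ yields
\[\bE\bigl|g(\widetilde X_M)-\widetilde Y_M\bigr|^2 \le (1+\lambda_3)\,\bE|\delta Y_M|^2 + (1+\lambda_3^{-1})\,K_g^2\,\bE|X_T-\widetilde X_M|^2,\]
where the last expectation is $O(\Delta t)$ by the standard strong convergence of the forward Euler scheme for \eqref{eq:SDE} (cf.\ the $X$-part of Theorem~\ref{theorem:ErrorEstimateSchemeDelong}, since the forward dynamics in \eqref{eq:SchemeDeepSolverOneNeuralNetworkRewritten1} and \eqref{eq:SchemeDelong} coincide). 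The factor $(1+K_g)^2$ in the claim will arise by bundling $K_g^2$ with $(1+\lambda_3)$ and $(1+\lambda_3^{-1})$ into the final $\lambda_3$-dependent prefactor.

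For the core recursion I subtract \eqref{eq:SchemeDeepSolverOneNeuralNetworkRewritten1} from \eqref{eq:BSDE} integrated on $[t_n,t_{n+1}]$, obtaining $\delta Y_{n+1}=\delta Y_n-\mathcal A_n+\mathcal M_n$ with $\mathcal A_n:=\int_{t_n}^{t_{n+1}}[f(r,X_r,Y_r,Z_r,\Psi_r)-f(t_n,\widetilde X_n,\widetilde Y_n,\widetilde Z_n,\widetilde\Psi_n)]\,\textnormal{d}r$, and $\mathcal M_n$ the sum of a $W$-integral of $Z_r-\widetilde Z_n$ and a $\widetilde N$-integral of $U_r(z)-[\mathcal U_n^{\rho_n}(\widetilde X_n+\gamma(\widetilde X_n)z)-\mathcal U_n^{\rho_n}(\widetilde X_n)]$. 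Since $\delta Y_n$ is $\cF_n$-measurable the cross term $\delta Y_n\cdot\mathcal M_n$ vanishes in expectation; applying Young's inequality with free parameter $x>0$ on $\delta Y_n-\mathcal A_n$ together with the It\^o isometry on $\mathcal M_n$ gives
\[\bE|\delta Y_{n+1}|^2 \le (1+x\Delta t)\,\bE|\delta Y_n|^2 + (x\Delta t)^{-1}\bE|\mathcal A_n|^2 + \bE|\mathcal M_n|^2.\]
The Lipschitz property of $f$ and Cauchy--Schwarz bound $\bE|\mathcal A_n|^2$ by $5K_f^2\Delta t$ times the $[t_n,t_{n+1}]$-integral of $|r-t_n|+|X_r-\widetilde X_n|^2+|Y_r-\widetilde Y_n|^2+|Z_r-\widetilde Z_n|^2+|\Psi_r-\widetilde\Psi_n|^2$; the product $(x\Delta t)^{-1}\cdot 5K_f^2\Delta t$ is precisely the origin of the $5K_f^2/x$ multiplier in $H(x)$. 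The $|X_r-\widetilde X_n|^2$ and $|Y_r-\widetilde Y_n|^2$ pieces are further split via $|Y_r-\widetilde Y_n|^2\le 2|Y_r-Y_{t_n}|^2+2|\delta Y_n|^2$ (and analogously for $X$): the path-regularity parts are absorbed into the $O(\Delta t)$ residual, and the $|\delta Y_n|^2$ contributions feed back into the Gronwall rate, producing the $\bar K^2/x$ additions to the exponent $c$.

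The delicate step is the jump contribution to $\bE|\mathcal M_n|^2$. Using the representation $U_r(z)=u(r,X_{r-}+\gamma(X_{r-})z)-u(r,X_{r-})$ from Theorem~\ref{theorem:LinkPIDE} and adding/subtracting $u(t_n,\widetilde X_n+\gamma(\widetilde X_n)z)$ and $u(t_n,\widetilde X_n)$, the integrand splits into two \emph{time--space regularity} pieces $u(r,\cdot)-u(t_n,\cdot)$, controlled via \eqref{eq:LocallyHoelder} in terms of $\Delta t$ and $|X_{r-}-\widetilde X_n|^2$ (with linear-growth multipliers), and two \emph{network approximation} pieces $u(t_n,\cdot)-\mathcal U_n^{\rho_n}(\cdot)$ whose $\nu$-integrals are exactly the two summands of $\textnormal{Error}^\rho$ in \eqref{eq:DefiError}. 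The $z$-integrations absorb the constants $K_\nu$ and $\int_{\bR^d}|z|^2\nu(\textnormal{d}z)$ that appear inside the exponent of $H(x)$. The time-averaged $Z$ and $\Psi$ discrepancies are then split via the intermediate processes $\widetilde{\widetilde Z}_n,\widetilde{\widetilde\Psi}_n$ from \eqref{eq:ProcessesBouchardElie}: the aggregated residuals $\sum_n\int_{t_n}^{t_{n+1}}|Z_r-\widetilde{\widetilde Z}_n|^2\,\textnormal{d}r$ and its $\Psi$-analogue are $O(\Delta t)$ by Theorem~\ref{theorem:Bouchard_Elie_Path_regularity}, whereas the complementary pieces $\Delta t|\widetilde{\widetilde Z}_n-\widetilde Z_n|^2$ and $\Delta t|\widetilde{\widetilde\Psi}_n-\widetilde\Psi_n|^2$ reconstruct precisely the sum on the right-hand side of the claim. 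Iterating the recursion via the discrete Gronwall lemma turns $(1+c\Delta t)^M$ into $e^{cT}$ with $c=x+5\bar K^2/x+\bar K^2\max\{x^{-1}+1+\int|z|^2\nu(\textnormal{d}z),5x^{-1}\}$, matching the exponent of $H(x)$ in \eqref{eq:DefinitionFunctionH}; combining with the terminal Young split, reorganising the $\lambda_3$-dependent constants into $C(\lambda_3)$, bundling the $K_g^2$-factors into $(1+K_g)^2$, and finally minimising $x>0$ produces $\bar H=\min_x H(x)$. The main technical obstacle will be the synchronisation of the several Young parameters so that the coefficient of the $\widetilde{\widetilde Z}-\widetilde Z,\widetilde{\widetilde\Psi}-\widetilde\Psi$ sum comes out exactly $(1+\lambda_3)\bar H$; the jump-term bookkeeping --- where the Feynman--Kac representation of $U$, the H\"older regularity \eqref{eq:LocallyHoelder} of $u$, and the $z$-integrations producing the $K_\nu$ and $\int|z|^2\nu$ constants all interact --- is where most of the care is required.
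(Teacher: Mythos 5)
Your proposal follows essentially the same route as the paper: interpolate the scheme in time, set up a one-step recursion for $\bE[|\delta Y_n|^2]$ whose drift part yields the $5K_f^2/x$ terms, use the Feynman--Ka\'c representation $U_r(z)=u(r,X_{r-}+\gamma(X_{r-})z)-u(r,X_{r-})$ together with the H\"older estimate \eqref{eq:LocallyHoelder} to split the jump quadratic variation into path-regularity pieces and the network terms forming $\textnormal{Error}^\rho$, introduce $\widetilde{\widetilde Z}_n,\widetilde{\widetilde\Psi}_n$ via Theorem \ref{theorem:Bouchard_Elie_Path_regularity}, and close with discrete Gr\"onwall and minimisation over $x$. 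One genuine (and legitimate) deviation: you dispose of the forward error by invoking the Euler rate $\bE[|X_{t_n}-\widetilde X_n|^2]\le C\Delta t$ directly (valid, since the forward parts of \eqref{eq:SchemeDeepSolverOneNeuralNetworkRewritten1} and \eqref{eq:SchemeDelong} coincide), whereas the paper propagates $\delta X_n$ and $\delta Y_n$ jointly through the coupled recursion on $N_n=\max\{\bE[|\delta X_n|^2],\bE[|\delta Y_n|^2]\}$; your shortcut removes the $A_1$-branch from the exponent and only strengthens the bound, so the stated $\bar H$ still works. Likewise your terminal Young split with parameter $\lambda_3$ (versus the paper's split with parameter $K_g$ producing $(1+K_g)^2$ up front) is fine, since the $K_g^2(1+\lambda_3^{-1})\bE[|X_T-\widetilde X_M|^2]=O(\Delta t)$ remainder lands in $C(\lambda_3)$.

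Two points need repair before this is a proof. First, your one-step inequality
$\bE[|\delta Y_{n+1}|^2]\le(1+x\Delta t)\bE[|\delta Y_n|^2]+(x\Delta t)^{-1}\bE[|\mathcal A_n|^2]+\bE[|\mathcal M_n|^2]$
silently drops the cross term $2\bE[\mathcal A_n\mathcal M_n]$. Only $\bE[\delta Y_n\mathcal M_n]$ vanishes; $\mathcal A_n=\int_{t_n}^{t_{n+1}}\delta f_r\,\textnormal{d}r$ is not $\cF_n$-measurable, and conditioning gives $\bE[\mathcal A_n\mathcal M_n]=\int_{t_n}^{t_{n+1}}\bE[\delta f_r(\mathcal M_r-\mathcal M_{t_n})]\,\textnormal{d}r\neq0$ in general. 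You can fix this by a further Young split $2\bE[\mathcal A_n\mathcal M_n]\le x\Delta t\,\bE[|\mathcal M_n|^2]+(x\Delta t)^{-1}\bE[|\mathcal A_n|^2]$ and absorbing the resulting $(1+x\Delta t)$ inflation of the martingale term into the $(1+\lambda_3)$ slack for $\Delta t$ small, or more cleanly by applying It\^o's formula to the interpolated $|\delta Y_t|^2$ as the paper does, where the exact drift $-2\delta Y_t\delta f_t+|\delta Z_t|^2$ handles this correlation automatically. Second, the crude splits $|Y_r-\widetilde Y_n|^2\le2|Y_r-Y_{t_n}|^2+2|\delta Y_n|^2$ (and the $X$-analogue) would double the feedback coefficients and overshoot the exponent in \eqref{eq:DefinitionFunctionH}; to land on the stated $\bar H(1+\lambda_3)$ you must use $(1+\epsilon)$-splits with $\epsilon$ chosen small relative to $\lambda_3$, exactly as the paper does with $\epsilon_1,\dots,\epsilon_5$ and $\lambda_2$ -- you flag this synchronisation issue yourself, but it is not optional.
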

\begin{proof}
Let $(X,Y,Z,U) \in \bS_{[0,T]}^2(\bR^d) \times \bS_{[0,T]}^2(\bR) \times \bH_{[0,T]}^2(\bR^d) \times \bH_{[0,T],N}^2(\bR)$ be the unique solution to the FBSDE in \eqref{eq:SDE}-\eqref{eq:BSDE} and $(\widetilde{X}_n, \widetilde{Y}_n, \widetilde{Z}_n, \widetilde{\Psi}_n)_{ n=0,..., M}$ be defined in \eqref{eq:SchemeDeepSolverOneNeuralNetworkRewritten1}.\\
We introduce the processes $(\widetilde{X}_t)_{t \in [0,T]}, (\widetilde{Y}_t)_{t \in [0,T]}$ by 
\begin{align}
    \widetilde{X}_{t}&=\widetilde{X}_n +b(\widetilde{X}_n) (t-t_n)+ \sigma(\widetilde{X}_n)^{\top}(W_t-W_n) + \int_{t_n}^{t}\int_{\mathbb{R}^d} \gamma(\widetilde{X}_n) z \widetilde{N}(\textnormal{d}r,\textnormal{d}z), \nonumber  \\
\widetilde{Y}_t&= \widetilde{Y}_n+ f\left( t_n, \widetilde{X}_n, \widetilde{Y}_n, \widetilde{Z}_n, \widetilde{\Psi}_n \right) (t-t_n) -\widetilde{Z}_n^{\top} (W_t-W_n)  \nonumber \\
	&\quad - \left( \cU_n^{\rho_n}\left(\widetilde{X}_n + \gamma(\widetilde{X}_n)\sum_{i=N([0,t_n], \mathbb{R}^d)+1}^{N([0,t], \mathbb{R}^d)}  z_i \right)- \cU_n^{\rho_n}(\widetilde{X}_n) \right)	 + \int_{t_n}^{t} \widetilde{\Psi}_n \textnormal{d}s,  \nonumber
\end{align}
for $t \in [t_n,t_{n+1})$, $n=0,...,M-1$ and $W_n:=W_{t_n}$, $n=0,...,M-1.$ \\
We define the following differences for $t \in [t_n,t_{n+1})$ 
\begin{align}
 \delta X_t:= X_t - \widetilde{X}_t, \quad \delta Y_t := Y_t - \widetilde{Y_t}, \quad \delta Z_t:=Z_t - \widetilde{Z}_n, \quad \delta \Psi_t := \Psi_t - \widetilde{\Psi}_n, \quad \nonumber
\end{align}
and 
\begin{align*}
\delta b_t &:=b(X_t)-b(\widetilde{X}_n), \quad 
\delta \sigma_t :=\sigma(X_t)-\sigma(\widetilde{X}_n), \quad 
\delta \gamma_t :=\gamma(X_t)-\gamma(\widetilde{X}_n), \\
\delta f_t &:=f(t,X_t,Y_t,Z_t,\Psi_t)-f(t_n,\widetilde{X}_n, \widetilde{Y}_n,\widetilde{Z}_n,\widetilde{\Psi}_n).
\end{align*}
\emph{Step 1:} It holds 
\begin{align}
\mathbb{E}\left[ \big \vert g(\widetilde{X}_M)-\widetilde{Y}_M \big\vert^2 \right] 
& = \mathbb{E} \left[ \big \vert g(\widetilde{X}_M)-g(X_T) + Y_T -\widetilde{Y}_M \big\vert^2\right]\nonumber \\
&\leq \left(1+ K_g^{-1}\right) \mathbb{E} \left[ \big \vert g(\widetilde{X}_M)-g(X_T) \big \vert^2\right]+  \left(1+ K_g\right)\mathbb{E} \left[ \big \vert  Y_T -\widetilde{Y}_M \big \vert^2\right] \nonumber \\
& \leq \left(K_g^2 + K_g\right) \mathbb{E} \left[\big \vert \delta X_{M} \big \vert^2\right]+ \left(1+ K_g\right)\mathbb{E} \left[ \big \vert \delta Y_{M} \big \vert^2\right] \nonumber \\
& \leq \left(1+ K_g\right)^2 \max \left \lbrace \mathbb{E}\left[ \big \vert \delta X_{M} \big \vert^2\right], \mathbb{E} \left[ \big \vert \delta Y_{M}\big  \vert^2\right] \right \rbrace \label{eq:EstimateStep1Max} 
\end{align}
where we use $g(X_T)=Y_T$ and that $g$ is Lipschitz continuous with constant $K_g$. Next, we derive estimates for $\mathbb{E} \left[ \big \vert \delta X_{M}\big  \vert^2\right]$ and $\mathbb{E} \left[ \big \vert \delta Y_{M}\big  \vert^2\right] $. \\
\emph{Step 2: Estimate for $\mathbb{E} \left[ \big \vert \delta X_{n+1}\big  \vert^2\right] $} \\
For $t \in [t_n,t_{n+1})$ we have
\begin{align*}
 \delta X_t= X_t - \widetilde{X}_t=X_n - \widetilde{X}_n + \int_{t_n}^t \delta b_s \textnormal{d}s + \int_{t_n}^t \delta \sigma_s \textnormal{d}W_s + \int_{t_n}^t \int_{\bR^d} \delta \gamma_s z \widetilde{N}(\textnormal{d}s,\textnormal{d}z)
\end{align*}
with $X_{n}:=X_{t_n}.$
By applying It\^{o}'s formula we get 
\begin{align*}
    \textnormal{d}\big \vert\delta X_t \big \vert^2&= 2 \delta X_{t} \textnormal{d}(\delta X_t)+ \big | \delta \sigma_t \big |^2 \textnormal{d}t + \int_{\mathbb{R}^d}\left((\delta X_{t}+ \delta \gamma_t z)^2 - \delta X_{t}^2-2\delta X_{t}\delta \gamma_t z\right) {N}(\textnormal{d}t,\textnormal{d}z)  \nonumber \\
& = \left(2 \delta X_{t} \delta b_t + \big | \delta \sigma_t \big|^2 \right)\textnormal{d}t + 2\delta X_{t} \delta \sigma_t \textnormal{d}W_t +\int_{\mathbb{R}^d}(\delta \gamma_t z)^2  {N}(\textnormal{d}t,\textnormal{d}z) + \int_{\mathbb{R}^d} 2 \delta X_t \delta \gamma_t z \widetilde{N}(\textnormal{d}t,\textnormal{d}z),
\end{align*}
and thus 
\begin{align*}
\bE \left[\big \vert \delta X_t \big\vert^2 \right]= \bE \left[\big \vert \delta X_{n} \big \vert^2 \right]+ \int_{t_n}^t \bE \left[ 2 \delta X_{s} \delta b_s + \big| \delta \sigma_s \big|^2 \right] \textnormal{d}s + \bE \left[ \int_{t_n}^t \int_{\bR^d} (\delta \gamma_s z)^2  {N}(\textnormal{d}s,\textnormal{d}z)  \right],
\end{align*}
with $\delta X_n:=\delta X_{t_n}$ for $n=0,...,M-1.$
For $\lambda_1>0$ it follows 
\begin{align}
 &\bE \left[\big \vert \delta X_t \big\vert^2 \right]\nonumber \\
 &\leq  \bE \left[\big \vert \delta X_{n}\big \vert^2 \right]+ \lambda_1  \int_{t_n}^t \bE \left[ \big|\delta X_{s}  \big|^2 \right] \textnormal{d}s + \lambda_1^{-1}\int_{t_n}^t  \bE \left[ \big|\delta b_{s}  \big|^2 \right] \textnormal{d}s + \int_{t_n}^t  \bE \left[ \big|\delta \sigma_s  \big|^2 \right] \textnormal{d}s \nonumber \\
 & \quad + \bE \left[ \int_{t_n}^t \int_{\bR^d} (\delta \gamma_s z)^2  {N}(\textnormal{d}s,\textnormal{d}z)  \right] \nonumber \\
  &  \leq \bE \left[\big \vert \delta X_{n}\big \vert^2 \right]+ \lambda_1  \int_{t_n}^t \bE \left[ \big \vert \delta X_{s}  \big\vert^2 \right] \textnormal{d}s+ \left(\lambda_1^{-1} (K_b)^2 +  (K_{\sigma})^2\right) \int_{t_n}^t \bE \left[ \big\vert X_s-\widetilde{X}_n\vert^2 \right]\textnormal{d}s  \nonumber \\
 & \quad + (K_{\gamma})^2 \left(\bE \left[ \int_{t_n}^t \int_{\bR^d} \big \vert z \big \vert^2 \big \vert X_s - \widetilde{X}_n \big \vert^2  \widetilde{N}(\textnormal{d}s,\textnormal{d}z)  \right] + \bE \left[ \int_{t_n}^t \int_{\bR^d} \big \vert z \big \vert^2 \big \vert X_s - \widetilde{X}_n \big \vert^2  \nu(\textnormal{d}z) \textnormal{d}s \right]\right)\nonumber \\
  &  \leq  \bE \left[\big \vert \delta X_{n}\big \vert^2 \right]+ \lambda_1  \int_{t_n}^t \bE \left[ \big \vert \delta X_{s}  \big \vert^2 \right] \textnormal{d}s+ \left(\lambda_1^{-1} (K_b)^2 +  (K_{\sigma})^2\right) \int_{t_n}^t \bE \left[ \big\vert X_s-\widetilde{X}_n \big \vert^2\right]\textnormal{d}s  \nonumber \\
 & \quad + (K_{\gamma})^2 \int_{\bR^d} \vert z \vert^2 \nu(\textnormal{d}z) {\int_{t_n}^t \bE \left[ \big \vert X_s - \widetilde{X}_n \big\vert^2   \right] \textnormal{d}s }\label{eq:ContinuousX1}
\end{align}
by using Assumption \ref{assump:ExistenceUniquenessFBSDE} \textbf{(A1)}, the definition of the compensated random  measure $\widetilde{N}$ and the martingale property. Moreover, for $\epsilon_1:=\lambda_2 \left(\lambda_1^{-1} (K_b)^2 + (K_{\sigma})^2+ (K_{\gamma})^2 \int_{\bR^d} \vert z \vert^2 \nu(\textnormal{d}z)\right)^{-1} $ with $\lambda_2>0,$ it holds for $s \in [t_n,t_{n+1})$
\begin{align}
\bE \left[ \big \vert X_s-\widetilde{X}_n\big\vert^2\right] 
&\leq \left(1+\epsilon_1\right) \bE \left[ \big \vert \delta X_{n} \big \vert^2\right] + \left(1+ \epsilon_1^{-1}\right) \bE \left[ \big \vert X_s-X_{n} \big \vert^2 \right]\nonumber \\
& \leq \left(1+\epsilon_1\right) \bE \left[ \big\vert \delta X_{n}\big \vert^2\right]+ \left(1+ \epsilon_1^{-1}\right) C \Delta t, \label{eq:ContinuousX2}
\end{align}
{by applying Theorem} \ref{theorem:Bouchard_Elie_Path_regularity} {in the last inequality}. Combining the inequalities \eqref{eq:ContinuousX1} and \eqref{eq:ContinuousX2} yields
\begin{align}
 &\bE \left[\big \vert \delta X_t \big \vert^2 \right]\nonumber \\
 & \leq  \bE \left[ \big \vert \delta X_{n} \big \vert^2\right] + \lambda_1  \int_{t_n}^t \bE \left[ \big \vert \delta X_{s}  \big \vert^2 \right] \textnormal{d}s \nonumber \\
& \quad + \Delta t \left(\lambda_1^{-1} (K_b)^2 +  (K_{\sigma})^2 + (K_{\gamma})^2 \int_{\bR^d} \big  \vert z \big \vert^2 \nu(\textnormal{d}z)\right)  \left( \left(1+\epsilon_1\right) \bE \left[\big  \vert \delta X_{n} \big \vert^2\right]+ \left(1+ \epsilon_1^{-1}\right)C \Delta t \right) \nonumber \\
& \leq C(\lambda_1,\lambda_2)  (\Delta t )^2 + \lambda_1  \int_{t_n}^t \bE \left[ \big \vert \delta X_{s}  \big \vert^2 \right] \textnormal{d}s + \nonumber \\
& \quad + \left(1+ \left(\lambda_1^{-1} (K_b)^2 +  (K_{\sigma})^2+ (K_{\gamma})^2 \int_{\bR^d} \vert z \vert^2 \nu( \textnormal{d}z )+\lambda_2 \right) \Delta t \right)\bE \left[ \big \vert \delta X_{n} \big \vert^2\right]. \nonumber 
\end{align}
By applying Gr\"{o}nwall's inequality it follows
\begin{align}
&\bE \left[\big \vert \delta X_{n+1}\big \vert^2 \right]\nonumber \\
& \leq e^{\lambda_1 \Delta t} \Bigg( C(\lambda_1,\lambda_2) (\Delta t )^2 + \Big(1+ \Big(\lambda_1^{-1} (K_b)^2 +  (K_{\sigma})^2 + (K_{\gamma})^2 \int_{\bR^d} \big \vert z \big \vert^2 \nu(\textnormal{d}z)+ \lambda_2\Big) \Delta t \Big)\bE \left[ \big \vert \delta X_{n} \big \vert^2\right] \Bigg ) \nonumber \\
& \leq e^{\left(\lambda_1 +\lambda_1^{-1} (K_b)^2 +  (K_{\sigma})^2+ (K_{\gamma})^2 \int_{\bR^d} \vert z \vert^2 \nu(\textnormal{d}z )+\lambda_2\right) \Delta t} \bE \left[ \big \vert \delta X_{n} \big \vert^2\right]  + e^{\lambda_1 \Delta t} C( \lambda_1,\lambda_2)  (\Delta t)^2 \nonumber \\
& \leq e^{\left(\lambda_1 +\lambda_1^{-1} (K_b)^2 +  (K_{\sigma})^2+ (K_{\gamma})^2 \int_{\bR^d}  \vert z  \vert^2 \nu(\textnormal{d}z )+\lambda_2\right) \Delta t } \bE \left[\big  \vert \delta X_{n} \big \vert^2\right]  +  {C}(\lambda_1,\lambda_2)  (\Delta t)^2, \label{eq:EstimateXStep2}
\end{align}
for sufficiently small $\Delta t$.   {Here, the inequality $1+x \leq e^x$ is applied in \eqref{eq:EstimateXStep2}.}\\
\emph{Step 3: Estimate for $\mathbb{E} \left[ \big  \vert \delta  Y_{n+1} \big \vert^2\right]$}\\ 
For $t \in [t_n,t_{n+1})$ it holds 
\begin{align*}
    \delta Y_t&=Y_t-\widetilde{Y}_t \\
    & =Y_{n}-\widetilde{Y}_n+\int_{t_n}^t \delta f_s \textnormal{d}s- \int_{t_n}^t (\delta Z_s)^{\top} \textnormal{d}W_s- \int_{t_n}^t \int_{\bR^d} U_s(z)N(\textnormal{d}s, \textnormal{d}z)+  \int_{t_n}^t \int_{\bR^d}U_s(z)\nu(\textnormal{d}z)\textnormal{d}s  \\
    & \quad + \int_{t_n}^t \int_{\bR^d} \left( \cU_n^{\rho_n}(\widetilde{X}_n + \gamma(\widetilde{X}_n) z )- \cU_n^{\rho_n}(\widetilde{X}_n) \right)N(\textnormal{d}s, \textnormal{d}z)- \int_{t_n}^t \widetilde{\Psi}_n \textnormal{d}s \nonumber \\
    &= Y_{n}-\widetilde{Y}_n+\int_{t_n}^t \delta f_s \textnormal{d}s- \int_{t_n}^t (\delta Z_s)^{\top} \textnormal{d}W_s\\
    & \quad - \int_{t_n}^t  \int_{\bR^d}\left( u(s,X_{s-}+\gamma(X_{s-})z) - u(s,X_{s-}) -\left( \cU_n^{\rho_n}(\widetilde{X}_n +\gamma(\widetilde{X}_n) z )- \cU_n^{\rho_n}(\widetilde{X}_n) \right)\right)\widetilde{N}(\textnormal{d}s, \textnormal{d}z),
\end{align*}
where $u \in C^{1,2}([0,T] \times \bR^d, \bR)$ is as in Theorem \ref{theorem:LinkPIDE}. 
{We define for $t \in [t_n,t_{n+1})$} 
\begin{align*}
    \delta u_t(z):= u(t,X_{t-}+\gamma(X_{t-})z) - u(t,X_{t-}) -\left( \cU_n^{\rho_n}(\widetilde{X}_n +\gamma(\widetilde{X}_n) z )- \cU_n^{\rho_n}(\widetilde{X}_n) \right).
\end{align*}
By applying It\^{o}'s formula we get
\begin{align*}
    \textnormal{d} \big \vert \delta Y_t\big \vert^2 = \left(- 2  \delta Y_t  \delta f_t + \big \vert \delta Z_t \big\vert^2 \right)\textnormal{d}t + 2 \delta Y_t (\delta Z_t)^{\top}\textnormal{d} W_t - \int_{\mathbb{R}^d}2 \delta Y_t \delta u_t(z) \widetilde{N}(\textnormal{d}t,\textnormal{d}z)  +\int_{\mathbb{R}^d} (\delta u_t (z) )^2  N(\textnormal{d}t,\textnormal{d}z) 
\end{align*}
and thus
\begin{align}
\bE \left[ \big \vert \delta Y_t \big \vert^2  \right]= \bE \left[ \big \vert \delta Y_{n} \big \vert^2  \right] + \int_{t_n}^t \bE \left[- 2 \delta Y_s \delta f_s + \big \vert \delta Z_s \big \vert^2\right] \textnormal{d}s + \bE \left[ \int_{t_n}^t \int_{\bR^d} (\delta u_s (z))^2 N(\textnormal{d}s,\textnormal{d}z)\right]. \label{eq:AdaptedLemma3Y1}
\end{align}
For $\lambda_1>0$ it holds
\begin{align}
 &\int_{t_n}^t \bE \left[ -2 \delta Y_s \delta f_s + \big \vert \delta Z_s \big \vert^2\right] \textnormal{d}s \nonumber \\
 & \leq \lambda_1  \int_{t_n}^t \bE \left[ \big \vert \delta Y_s \big \vert^2 \right] \textnormal{d}s + \lambda_1^{-1}  \int_{t_n}^t \bE \left[\big \vert \delta  f_s \big \vert^2 \right] \textnormal{d}s + \int_{t_n}^t  \bE \left[ \big \vert \delta Z_s \big \vert^2 \right] \textnormal{d}s \nonumber \\
    & \leq \lambda_1 \int_{t_n}^t  \bE \left[ \big \vert \delta Y_s \big \vert^2 \right] \textnormal{d}s+ \left( 1 + 5 (K_f)^2 \lambda_1^{-1}\right)\int_{t_n}^t  \bE \left[ \big \vert \delta Z_s \big \vert^2 \right] \textnormal{d}s + 5(K_{f,t})^2 \lambda_1^{-1} (\Delta t)^2  \nonumber \\
  & \quad + 5 (K_f)^2 \lambda_1^{-1}  \int_{t_n}^t \bE \left[ \big \vert X_s- \widetilde{X}_n \big \vert^2 \right] \textnormal{d}s + 5 (K_f)^2 \lambda_1^{-1}  \int_{t_n}^t \bE \left[ \big \vert Y_s- \widetilde{Y}_n \big \vert^2 \right] \textnormal{d}s \nonumber \\
  & \quad +   5 (K_f)^2 \lambda_1^{-1}  \int_{t_n}^t \bE \left[ \big \vert \delta \Psi_s \big \vert^2 \right] \textnormal{d}s \label{eq:AdaptedLemma3Y2}, 
\end{align}
where we use   {Assumptions \ref{assump:ExistenceUniquenessFBSDE} \textbf{(A1)}, \textbf{(A2)}.} 
{For the {third} term in \eqref{eq:AdaptedLemma3Y1} we have for $t \in [t, t_{n+1})$
\begin{align}
    &\bE \left[ \int_{t_n}^t \int_{\bR^d} (\delta u_s (z))^2 N(\textnormal{d}s,\textnormal{d}z)\right] \nonumber \\
    & \leq 4 \bE \left[ \int_{t_n}^t \int_{\bR^d} \left( u(s,X_{s-}+\gamma(X_{s-})z) - u(t_n, \widetilde{X}_n + \gamma(\widetilde{X}_n)z) \right)^2\nu(\textnormal{d}z)\textnormal{d}s\right] \nonumber \\
    & \quad + 4 \bE \left[ \int_{t_n}^t \int_{\bR^d} \left( u(s,X_{s-}) - u(t_n, \widetilde{X}_n) \right)^2 \nu(\textnormal{d}z)\textnormal{d}s\right] \nonumber \\
    & \quad + 4\bE \left[ \int_{t_n}^t \int_{\bR^d}\left(  u(t_n, \widetilde{X}_n + \gamma(\widetilde{X}_n)z) -\cU_n^{\rho_n}(\widetilde{X}_n +\gamma(\widetilde{X}_n)z)\right)^2\nu(\textnormal{d}z)\textnormal{d}s\right] \nonumber \\
    & \quad +4\bE \left[ \int_{t_n}^t \int_{\bR^d}\left(  u(t_n, \widetilde{X}_n) -\cU_n^{\rho_n}(\widetilde{X}_n)\right)^2\nu(\textnormal{d}z)\textnormal{d}s\right] \nonumber \\
    & \leq  4 \bE \left[ \int_{t_n}^t \int_{\bR^d}  C^2 \left( \big \vert s-t_n\big \vert^{\frac{1}{2}} + \big \vert X_{s-}-\widetilde{X}_n  \big \vert + K_{\gamma}\big \vert  X_{s-}-\widetilde{X}_n  \big \vert \big \vert z \big \vert \right)^2 \nu(\textnormal{d}z)\textnormal{d}s\right] \nonumber \\
    & \quad + 4 \bE \left[ \int_{t_n}^t \int_{\bR^d} C^2 \left( \big \vert s-t_n\big \vert^{\frac{1}{2}} + \big \vert  X_{s-}-\widetilde{X}_n  \big \vert  \right)^2 \nu(\textnormal{d}z)\textnormal{d}s\right] \nonumber \\
    & \quad + 4\Delta t \bE \left[   \int_{\bR^d}\left(  u(t_n, \widetilde{X}_n + \gamma(\widetilde{X}_n)z) -\cU_n^{\rho_n}(\widetilde{X}_n +\gamma(\widetilde{X}_n)z)\right)^2\nu(\textnormal{d}z)\right] \nonumber \\
    & \quad +4 \Delta t K_{\nu} \bE \left[\left(  u(t_n, \widetilde{X}_n) -\cU_n^{\rho_n}(\widetilde{X}_n)\right)^2\right]. \label{eq:TermNeuralNetwork1}
\end{align}
Here, we use that $u$ is assumed to be $\frac{1}{2}$-H\"older continuous with respect to $t$ and \eqref{eq:LocallyHoelder}. Moreover, it holds
\begin{align}
 &\bE \left[ \int_{t_n}^t \int_{\bR^d}  C^2 \left( \big \vert s-t_n\big \vert^{\frac{1}{2}} + \big \vert  X_{s-}-\widetilde{X}_n  \big \vert + K_{\gamma}\big \vert  X_{s-}-\widetilde{X}_n  \big \vert \big \vert z \big \vert \right)^2 \nu(\textnormal{d}z)\textnormal{d}s\right]\nonumber \\
   & \leq 2 C^2 \mathbb{E} \left[\int_{t_n}^t \int_{\bR^d}\big \vert s-t_n\big \vert \nu(\textnormal{d}z)\textnormal{d}s \right] + 2 C^2 \int_{t_n}^t \int_{\bR^d}\left(1+ K_{\gamma}^2 \big \vert z \big \vert^2\right)\bE \left [\big \vert  X_s-\widetilde{X}_n  \big \vert^2 \right]\nu(\textnormal{d}z)\textnormal{d}s \nonumber \\
    &= 2C^2 (\Delta t)^2 K_{\nu} + 2 C^2 \left( K_{\nu} + K_{\gamma}^2 \int_{\mathbb{R}^d}  \big \vert z \big \vert^2 \nu(\textnormal{d}z) \right) \int_{t_n}^t\bE \left [\big \vert  X_s-\widetilde{X}_n  \big \vert^2 \right]\textnormal{d}s.
\label{eq:TermNeuralNetwork2}
\end{align}
and 
\begin{align}
\bE \left[ \int_{t_n}^t \int_{\bR^d} C^2 \left( \big \vert s-t_n\big \vert^{\frac{1}{2}} + \big \vert  X_s-\widetilde{X}_n  \big \vert  \right)^2 \nu(\textnormal{d}z)\textnormal{d}s\right] \leq 2C^2 (\Delta t)^2 K_{\nu}+ 2 C^2 K_{\nu}\int_{t_n}^t\bE \left [\big \vert  X_s-\widetilde{X}_n  \big \vert^2 \right]\textnormal{d}s. \label{eq:TermNeuralNetwork3}
\end{align}
By combining \eqref{eq:AdaptedLemma3Y1}-\eqref{eq:TermNeuralNetwork3} we get
\begin{align}
&\bE \left[ \big \vert \delta Y_t \big \vert^2  \right] \nonumber \\
  & \leq \bE \left[ \big \vert \delta Y_{n} \big \vert^2  \right] + \lambda_1 \int_{t_n}^t  \bE \left[ \big \vert \delta Y_s \big \vert^2 \right] \textnormal{d}s+ \left( 1 + 5 (K_f)^2 \lambda_1^{-1}\right)\int_{t_n}^t  \bE \left[ \big \vert \delta Z_s \big \vert^2 \right] \textnormal{d}s \nonumber \\
& \quad +  (\Delta t)^2 \left( 5(K_{f,t})^2 \lambda_1^{-1} + 16C^2 K_{\nu}\right)+ 5 (K_f)^2 \lambda_1^{-1}  \int_{t_n}^t \bE \left[ \big \vert Y_s- \widetilde{Y}_n \big \vert^2 \right] \textnormal{d}s  \nonumber \\
& \quad + \left( 5 (K_f)^2 \lambda_1^{-1} + 8 C^2 \left( 2 K_{\nu} + K_{\gamma}^2 \int_{\mathbb{R}^d}  \big \vert z \big \vert^2 \nu(\textnormal{d}z) \right)\right) \int_{t_n}^t\bE \left [\big \vert  X_s-\widetilde{X}_n  \big \vert^2 \right]\textnormal{d}s \nonumber \\
& \quad + 4\Delta t \Bigg(\bE \left[   \int_{\bR^d}\left(  u(t_n, \widetilde{X}_n + \gamma(\widetilde{X}_n)z) -\cU_n^{\rho_n}(\widetilde{X}_n+\gamma(\widetilde{X}_n)z)\right)^2\nu(\textnormal{d}z)\right]  \nonumber \\
& \quad +  K_{\nu} \bE \left[\left(  u(t_n, \widetilde{X}_n) -\cU_n^{\rho_n}(\widetilde{X}_n)\right)^2\right] \Bigg) + 5 (K_f)^2 \lambda_1^{-1}  \int_{t_n}^t \bE \left[ \big \vert \delta \Psi_s \big \vert^2 \right] \textnormal{d}s. \label{eq:TermNeuralNetwork4}
\end{align}
}
Moreover, let $\epsilon_2:= \left(5 (K_f)^2 \lambda_1^{-1}\right)^{-1} \lambda_2 $ and 
$$
\epsilon_3:=\lambda_2 \left( 5 (K_f)^2 \lambda_1^{-1} + 8 C^2 \left( 2 K_{\nu} + K_{\gamma}^2 \int_{\mathbb{R}^d}  \big \vert z \big \vert^2 \nu(\textnormal{d}z) \right)\right)^{-1}
$$ for $\lambda_2 >0$, {we conclude that} for $s \in [t_n,t_{n+1})$
\begin{align}
\bE \left[ \big \vert Y_s-\widetilde{Y}_n \big \vert^2\right] &\leq (1+\epsilon_2)\bE \left[ \big \vert \delta Y_{n} \big \vert^2\right]+ (1+ \epsilon_2^{-1}) C \Delta t, \label{eq:AdaptedLemma3Y4} \\
\bE \left[ \big \vert X_s-\widetilde{X}_n \big \vert^2\right] &\leq (1+\epsilon_3) \bE \left[\big  \vert \delta X_{n} \big \vert^2\right]+ (1+ \epsilon_3^{-1}) C \Delta t, \label{eq:AdaptedLemma3Y3} 
\end{align}
where \eqref{eq:AdaptedLemma3Y3} and \eqref{eq:AdaptedLemma3Y4} follow by similar arguments as in \eqref{eq:ContinuousX2}.
{Combining \eqref{eq:TermNeuralNetwork4}-\eqref{eq:AdaptedLemma3Y4} yields
\begin{align}
&\bE \left[ \big \vert \delta Y_t \big \vert^2  \right] \nonumber \\
& \leq \lambda_1 \int_{t_n}^t  \bE \left[ \big \vert \delta Y_s \big \vert^2 \right] \textnormal{d}s+ \left( 1 + 5 (K_f)^2 \lambda_1^{-1}\right)\int_{t_n}^t  \bE \left[ \big \vert \delta Z_s \big \vert^2 \right] \textnormal{d}s + 5 (K_f)^2 \lambda_1^{-1}  \int_{t_n}^t \bE \left[ \big \vert \delta \Psi_s \big \vert^2 \right] \textnormal{d}s \nonumber \\
& \quad + \left(1+ \left(5(K_f)^2 \lambda_1^{-1} + \lambda_2\right) \Delta t\right) \bE \left[ \big \vert \delta Y_{n} \big \vert^2\right] \nonumber \\
& \quad + \left( 5 (K_f)^2 \lambda_1^{-1} + 8 C^2 \left( 2 K_{\nu} + K_{\gamma}^2 \int_{\mathbb{R}^d}  \big \vert z \big \vert^2 \nu(\textnormal{d}z) \right)  + \lambda_2 \right) \bE \left[ \big \vert \delta X_{n} \big \vert^2\right] \Delta t + C(\lambda_1,\lambda_2) (\Delta t)^2 \nonumber \\
& \quad + 4\Delta t \textnormal{Error}^{\rho_n}, \nonumber 
\end{align}
with 
\begin{align*}
    \textnormal{Error}^{\rho_n}&:= \bE \left[   \int_{\bR^d}\left(  u(t_n, \widetilde{X}_n + \gamma(\widetilde{X}_n)z) -\cU_n^{\rho_n}(\widetilde{X}_n+\gamma(\widetilde{X}_n)z)\right)^2\nu(\textnormal{d}z)\right] \nonumber \\
    & \quad + K_{\nu} \bE \left[\left(  u(t_n, \widetilde{X}_n) -\cU_n^{\rho_n}(\widetilde{X}_n)\right)^2\right].
\end{align*}
By applying Gr\"onwall's inequality it follows
\begin{align}
   & \bE \left[ \vert \delta Y_{n+1} \vert^2 \right]\nonumber \\
    & \leq e^{\lambda_1 \Delta t}\Bigg(  5 (K_f)^2 \lambda_1^{-1}  \int_{t_n}^t \bE \left[ \big \vert \delta \Psi_s \big \vert^2 \right] \textnormal{d}s +  \left( 1 + 5 (K_f)^2 \lambda_1^{-1}\right)\int_{t_n}^t  \bE \left[ \big \vert \delta Z_s \big \vert^2 \right] \textnormal{d}s+ C(\lambda_1,\lambda_2 )(\Delta t)^2 \nonumber \\
    & \quad  + 4\Delta t \textnormal{Error}^{\rho_n}+ \left( 5 (K_f)^2 \lambda_1^{-1} + 8 C^2 \left( 2 K_{\nu} + K_{\gamma}^2 \int_{\mathbb{R}^d}  \big \vert z \big \vert^2 \nu(\textnormal{d}z) \right)  + \lambda_2 \right) \bE \left[ \big \vert \delta X_{n} \big \vert^2\right] \Delta t \nonumber \\
    & \quad + \left(1+ \left(5(K_f)^2 \lambda_1^{-1} + \lambda_2\right) \Delta t\right) \bE \left[ \big \vert \delta Y_{n} \big \vert^2\right]\Bigg). \nonumber
\end{align}
For sufficiently small $\Delta t$\footnote{Here, we use that for fixed $\overline{d}>0$ there exists $\Delta t$ small enough such that $e^{\lambda_1 \Delta t} \leq 1 + \overline{d}$. Thus, for any $L\geq 0$ it holds $e^{\lambda_1 \Delta t} L \leq (1+ \overline{d})L$ for sufficiently small $\Delta t$ and we choose $\overline{d}$ such that $(1+ \overline{d})L=L+ \lambda_2.$}
\begin{align}
 & \bE \left[ \vert \delta Y_{n+1} \vert^2 \right]\nonumber \\
 & \leq e^{\Delta t(\lambda_1+5(K_f)^2 \lambda_1^{-1}+ \lambda_2)} \bE \left[ \big \vert \delta Y_{n} \big \vert^2\right] + \left(5 (K_f)^2 \lambda_1^{-1} + \lambda_2 \right)\int_{t_n}^t \bE \left[ \big \vert \delta \Psi_s \big \vert^2 \right] \textnormal{d}s \nonumber \\
 & \quad +  \left( 1 + 5 (K_f)^2 \lambda_1^{-1}+ \lambda_2\right)\int_{t_n}^t  \bE \left[ \big \vert \delta Z_s \big \vert^2 \right] \textnormal{d}s + C(\lambda_1, \lambda_2)(\Delta t)^2 + 4\Delta t(1 + \lambda_2) \textnormal{Error}^{\rho_n}\nonumber \\
 & \quad + \left( 5 (K_f)^2 \lambda_1^{-1} + 8 C^2 \left( 2 K_{\nu} + K_{\gamma}^2 \int_{\mathbb{R}^d}  \big \vert z \big \vert^2 \nu(\textnormal{d}z) \right)  +2 \lambda_2 \right) \bE \left[ \big \vert \delta X_{n} \big \vert^2\right] \Delta t. \label{eq:TermNeuralNetwork5}
\end{align}
}
Moreover, for any $\epsilon_4>0$   {it holds}
\begin{align}
\int_{t_n}^{t_{n+1}}  \bE \left[ \big \vert \delta Z_s \big \vert^2 \right] \textnormal{d}s & =  \int_{t_n}^{t_{n+1}}  \bE \left[ \big \vert Z_s - \widetilde{Z}_{n} \pm \widetilde{\widetilde{Z}}_{n} \big \vert^2 \right] \textnormal{d}s \nonumber \\
& \leq \left(1+ \epsilon_4\right) \Delta t \bE \left[ \big \vert  \widetilde{\widetilde{Z}}_{n}- \widetilde{Z}_{n} \big \vert^2 \right] + \left(1+\epsilon_4^{-1}\right)\int_{t_n}^{t_{n+1}}  \bE \left[ \big \vert  Z_s-\widetilde{\widetilde{Z}}_{n} \big \vert^2 \right] \textnormal{d}s, \label{eq:Groenwall1.2}
\end{align}
and by the same arguments it follows for any $\epsilon_5>0 $
\begin{align}
\int_{t_n}^{t_{n+1}}  \bE \left[ \big \vert \delta \Psi_s \big \vert^2 \right] \textnormal{d}s   \leq \left(1+ \epsilon_5\right) \Delta t \bE \left[ \big \vert  \widetilde{\widetilde{\Psi}}_{n}- \widetilde{\Psi}_{n} \big \vert^2 \right] + \left(1+\epsilon_5^{-1}\right)\int_{t_n}^{t_{n+1}}  \bE \left[ \big \vert  \Psi_s-\widetilde{\widetilde{\Psi}}_{n} \big \vert^2 \right] \textnormal{d}s. \label{eq:Groenwall1.3}
\end{align}
Thus, by combining \eqref{eq:TermNeuralNetwork5}-\eqref{eq:Groenwall1.3} we get
\begin{align}
& \bE \left[ \big \vert \delta Y_{n+1} \big \vert^2 \right] \nonumber \\
&\leq e^{\Delta t(\lambda_1+5(K_f)^2 \lambda_1^{-1}+ \lambda_2)} \bE \left[ \big \vert \delta Y_{n} \big \vert^2\right] + C(\lambda_1, \lambda_2)(\Delta t)^2 + 4\Delta t(1 + \lambda_2) \textnormal{Error}^{\rho_n}\nonumber \\
 & \quad + \left(5 (K_f)^2 \lambda_1^{-1} + \lambda_2 \right) \left( \left(1+ \epsilon_5\right) \Delta t \bE \left[ \big \vert  \widetilde{\widetilde{\Psi}}_{n}- \widetilde{\Psi}_{n} \big \vert^2 \right] + \left(1+\epsilon_5^{-1}\right)\int_{t_n}^{t_{n+1}}  \bE \left[ \big \vert  \Psi_s-\widetilde{\widetilde{\Psi}}_{n} \big \vert^2 \right] \textnormal{d}s \right) \nonumber \\
 & \quad +  \left( 1 + 5 (K_f)^2 \lambda_1^{-1}+ \lambda_2\right)\left(\left(1+ \epsilon_4\right) \Delta t \bE \left[ \big \vert  \widetilde{\widetilde{Z}}_{n}- \widetilde{Z}_{n} \big \vert^2 \right] + \left(1+\epsilon_4^{-1}\right)\int_{t_n}^{t_{n+1}}  \bE \left[ \big \vert  Z_s-\widetilde{\widetilde{Z}}_{n} \big \vert^2 \right] \textnormal{d}s\right) \nonumber \\
 & \quad + \left( 5 (K_f)^2 \lambda_1^{-1} + 8 C^2 \left( 2 K_{\nu} + K_{\gamma}^2 \int_{\mathbb{R}^d}  \big \vert z \big \vert^2 \nu(\textnormal{d}z) \right)  +2 \lambda_2 \right) \bE \left[ \big \vert \delta X_{n} \big \vert^2\right] \Delta t. \label{eq:TermNeuralNetwork6}
\end{align}
\emph{Step 4: Combine the estimates from Step 2 and 3}\\
For $n=0,...,M-1,$ we define
$$
N_n:= \max \left\lbrace \bE \left[ \big \vert \delta X_{n} \big \vert^2 \right] , \bE \left[ \big \vert \delta Y_{n} \big \vert^2 \right]\right \rbrace.
$$
Combining \eqref{eq:TermNeuralNetwork6} and \eqref{eq:EstimateXStep2} yields
\begin{align}
&N_{n+1}\nonumber \\
& \leq e^{(\max \lbrace A_1,A_2\rbrace+A_0) \Delta t} N_n   \nonumber \\
     & \quad + \left(1+ 5 (K_f)^2 \lambda_1^{-1}+\lambda_2\right)  \left( \left(1+ \epsilon_4\right) \Delta t \bE \left[ \big \vert  \widetilde{\widetilde{Z}}_{n}- \widetilde{Z}_{n} \big \vert^2 \right] +  \left(1+\epsilon_4^{-1}\right)\int_{t_n}^{t_{n+1}}  \bE \left[ \big \vert  Z_s-\widetilde{\widetilde{Z}}_{n} \big \vert^2 \right] \textnormal{d}s \right)\nonumber  \\
     & \quad  + \left(5 (K_f)^2 \lambda_1^{-1} +\lambda_2\right)\left( \left(1+ \epsilon_5\right) \Delta t \bE \left[ \big \vert  \widetilde{\widetilde{\Psi}}_{n}- \widetilde{\Psi}_{n} \big \vert^2 \right]  +\left(1+\epsilon_5^{-1}\right)\int_{t_n}^{t_{n+1}}  \bE \left[ \big \vert  \Psi_s-\widetilde{\widetilde{\Psi}}_{n} \big \vert^2 \right] \textnormal{d}s\right) \nonumber \\
     & \quad + C(\lambda_1,\lambda_2) (\Delta t)^2 +4\Delta t(1+\lambda_2) \textnormal{Error}^{\rho_n}
\end{align}
with constants 
\begin{align*}
    A_0&:=5 (K_f)^2 \lambda_1^{-1} + 8 C^2 \left( 2 K_{\nu} + K_{\gamma}^2 \int_{\mathbb{R}^d}  \big \vert z \big \vert^2 \nu(\textnormal{d}z) \right)  +2 \lambda_2, \\
    A_1&:=\lambda_1 +\lambda_1^{-1} (K_b)^2 +  (K_{\sigma})^2+ (K_{\gamma})^2 \int_{\bR^d} \vert z \vert^2 \nu(\textnormal{d}z )+\lambda_2,  \\
    A_2&:=\lambda_1 +5(K_f)^2 \lambda_1^{-1} + \lambda_2.
\end{align*}
 Moreover, we set 
 \begin{align*}
      A_3&:=1+5 (K_f)^2 \lambda_1^{-1}+ \lambda_2 , \\
    A_4&:=5 (K_f)^2 \lambda_1^{-1}+\lambda_2, \\
    \overline{A}&:= \max \lbrace A_1,A_2 \rbrace + A_0, \\
    \textnormal{Error}^{\rho}&:=\max_{n=0,...,M-1} \textnormal{Error}^{\rho_n}.
 \end{align*}
By induction and using 
\begin{align*}
    N_0= \mathbb{E} \left[ \big \vert Y_0-\widetilde{Y}_0 \big \vert^2\right]=\big \vert Y_0-y \big \vert^2
\end{align*} it follows
\begin{align}
N_M &\leq A_3  e^{\overline{A}T}  \Delta t \left( \left(1+ \epsilon_4\right) \sum_{n=0}^{M-1}\bE \left[ \big \vert  \widetilde{\widetilde{Z}}_{t_n}- \widetilde{Z}_{n} \big \vert^2 \right] + (1+\epsilon_4^{-1})\sum_{n=0}^{M-1}  \int_{t_n}^{t_{n+1}}    \bE \left[\big \vert  Z_s-\widetilde{\widetilde{Z}}_{n} \big \vert^2 \right] \textnormal{d}s\right)\nonumber \\
& \quad +  A_4 e^{\overline{A} T}  \Delta t \left( (1+ \epsilon_5) \sum_{n=0}^{M-1} \bE \left[ \big \vert  \widetilde{\widetilde{\Psi}}_{n}- \widetilde{\Psi}_{n} \big \vert^2 \right] + (1+\epsilon_5^{-1})\sum_{n=0}^{M-1}  \int_{t_n}^{t_{n+1}}  \bE \left[ \big \vert  \Psi_s-\widetilde{\widetilde{\Psi}}_{n} \big \vert^2 \right] \textnormal{d}s\right)\nonumber \\
& \quad + e^{\overline{A} T}  \big \vert Y_0-y \big \vert^2 +e^{ \overline{A} T}\left(M C(\lambda_1,\lambda_2) (\Delta t)^2  + M4\Delta t(1+\lambda_2) \textnormal{Error}^{\rho} \right)\nonumber \\
&\leq A_3 e^{\overline{A} T} \Delta t\left(  \left(1+ \epsilon_4\right)  \sum_{n=0}^{M-1}\bE \left[ \big \vert  \widetilde{\widetilde{Z}}_{n}- \widetilde{Z}_{n} \big  \vert^2 \right] + C\Delta t  \left(1+\epsilon_4^{-1}\right)  \right) \nonumber \\
& \quad +  A_4 e^{\overline{A} T} \Delta t\left( (1+ \epsilon_5) \sum_{n=0}^{M-1} \bE \left[ \big \vert  \widetilde{\widetilde{\Psi}}_{n}- \widetilde{\Psi}_{n} \big \vert^2 \right] +\left(1+\epsilon_5^{-1}\right) C\Delta t\right) \nonumber \\
& \quad + e^{\overline{A} T} \big \vert Y_0-y \big \vert^2  + e^{\overline{A} T}\left(C(\lambda_1,\lambda_2) \Delta t  + 4 T (1 + \lambda_2) \textnormal{Error}^{\rho} \right), \label{eq:UsePathregularity}
\end{align}
where we use Theorem \ref{theorem:Bouchard_Elie_Path_regularity} in \eqref{eq:UsePathregularity}. By choosing $\epsilon_4, \epsilon_5>0$ as
\begin{equation} \label{eq:PossibleChoiceEpsilon34}
 \epsilon_4:=A_3^{-1}\lambda_2 \quad \text{ and }\quad  \epsilon_5:=  A_4^{-1}\lambda_2,
\end{equation}
it follows with \eqref{eq:UsePathregularity}
\begin{align}
     N_M &\leq (A_3 + \lambda_2) e^{\overline{A} T} \Delta t \sum_{n=0}^{M-1}\bE \left[ \big \vert  \widetilde{\widetilde{Z}}_{n}- \widetilde{Z}_{n} \big \vert^2 \right]  + (A_4+\lambda_2) e^{\overline{A} T} \Delta t \sum_{n=0}^{M-1} \bE \left[ \big \vert  \widetilde{\widetilde{\Psi}}_{n}- \widetilde{\Psi}_{n}\big \vert^2 \right]\nonumber \\ 
      &\quad + e^{\overline{A} T} C \Delta t \left( A_3\frac{\lambda_2+A_3}{\lambda_2}+A_4\frac{\lambda_2+A_4}{\lambda_2} \right) + e^{\overline{A} T} \big \vert Y_0-y \big \vert^2 \nonumber\\
      & \quad + e^{\overline{A} T}\left( C(\lambda_1,\lambda_2) \Delta t  + 4 T  (1 + \lambda_2) \textnormal{Error}^{\rho} \right). \label{eq:EstimateForChoiceEpsilon}
\end{align} 
\emph{Step 5: Combine the results of Step 1 and 4}\\
Combining \eqref{eq:EstimateForChoiceEpsilon} and \eqref{eq:EstimateStep1Max} yields
\begin{align}
&\mathbb{E}\left[ \big  \vert g(\widetilde{X}_M)-\widetilde{Y}_M\big \vert^2 \right] \nonumber \\
&\leq (1+ K_g)^2 \Bigg( (A_3 + \lambda_2) e^{\overline{A} T} \Delta t \sum_{n=0}^{M-1}\bE \left[ \big \vert  \widetilde{\widetilde{Z}}_{n}- \widetilde{Z}_{n} \big \vert^2 \right]  + (A_4+\lambda_2) e^{\overline{A} T} \Delta t \sum_{n=0}^{M-1} \bE \left[\big \vert  \widetilde{\widetilde{\Psi}}_{n}- \widetilde{\Psi}_{n} \big\vert^2 \right]\nonumber \\ 
 &\quad \quad+ e^{\overline{A} T} C\Delta t \left( A_3\frac{\lambda_2+A_3}{\lambda_2}+A_4\frac{\lambda_2+A_4}{\lambda_2} \right) + e^{\overline{A} T} \big \vert Y_0-y \big \vert^2  \nonumber\\
& \quad \quad + e^{\overline{A} T}\left( C(\lambda_1,\lambda_2) \Delta t  + 4 T (1 + \lambda_2) \textnormal{Error}^{\rho} \right)\Bigg) \nonumber \\ 
&\leq (1+ K_g)^2 \Bigg( (A_3 + \lambda_2) e^{\overline{A} T} \Delta t \left(\sum_{n=0}^{M-1}\bE \left[ \big \vert  \widetilde{\widetilde{Z}}_{n}- \widetilde{Z}_{n} \big \vert^2 \right]+ \sum_{n=0}^{M-1} \bE \left[ \big \vert  \widetilde{\widetilde{\Psi}}_{n}- \widetilde{\Psi}_{n} \big \vert^2 \right]\right) \nonumber \\
& \quad \quad+  C(\lambda_1,\lambda_2) \left(\big \vert Y_0-y \big \vert^2 +  \Delta t+ \textnormal{Error}^{\rho}\right)\Bigg). \label{eq:PreliminaryResult1}
\end{align}
 Note that it holds 
\begin{align}
     \overline{A}& = \max \lbrace{A_1,A_2 \rbrace}+A_0 \nonumber \\
    & = \max  \left \{ \lambda_1 +\lambda_1^{-1} (K_b)^2 +  (K_{\sigma})^2+ (K_{\gamma})^2 \int_{\bR^d} \vert z \vert^2 \nu(\textnormal{d}z )+\lambda_2,\lambda_1 +5(K_f)^2 \lambda_1^{-1} + \lambda_2 \right \}\nonumber \\
    & \quad+ 5 (K_f)^2 \lambda_1^{-1} + 8 C^2 \left( 2 K_{\nu} + K_{\gamma}^2 \int_{\mathbb{R}^d}   \vert z  \vert^2 \nu(\textnormal{d}z) \right)  +2 \lambda_2 \nonumber \\
    & \leq \lambda_1+3 \lambda_2+5  {\bar K}^2 \lambda_1^{-1}+8 C^2   {\bar K}  \left( 2 +   {\bar K}  \int_{\mathbb{R}^d}   \vert z  \vert^2 \nu(\textnormal{d}z) \right)\nonumber \\
    & \quad +{\bar K}^2 \max \left \{ \lambda_1^{-1} + 1+ \int_{\bR^d} \vert z \vert^2 \nu(\textnormal{d}z ), 5 \lambda_1^{-1} \right \} .\nonumber
\end{align}
  Moreover, for any given $\lambda_3>0$  we can choose $\lambda_2>0$ small enough such that
\begin{align*}
     \left(A_3 + \lambda_2\right)e^{\overline{A} T} \leq \left(1+\lambda_3\right) \left(1 + 5(K_f)^2 \lambda_1^{-1}\right) e^{T A_5(K_{\textnormal{max},   {\lambda_1}})}
\end{align*}
with 
\begin{align}
A_5(  {\bar K} , \lambda_1)&:=\lambda_1+5 {\bar K}^2 \lambda_1^{-1}+{\bar K}^2 \max \left \{ \lambda_1^{-1} + 1+ \int_{\bR^d} \vert z \vert^2 \nu(\textnormal{d}z ), 5 \lambda_1^{-1}\right\}\nonumber \\
& \quad +8 C^2   {\bar K}  \left( 2 +   {\bar K}  \int_{\mathbb{R}^d}   \vert z  \vert^2 \nu(\textnormal{d}z) \right).
\end{align}
Then it follows 
\begin{align}
    &\mathbb{E}\left[  \big \vert g(\widetilde{X}_M)-\widetilde{Y}_M\big \vert^2 \right] \nonumber \\
&\leq (1+ K_g)^2 (1+\lambda_3) (1 + 5(K_f)^2 \lambda_1^{-1}) e^{T A_5(  {\bar K} , \lambda_1)}  \Delta t  \sum_{n=0}^{M-1}\left(\bE \left[ \big \vert  \widetilde{\widetilde{Z}}_{n}- \widetilde{Z}_{n} \big \vert^2 \right] + \bE \left[ \big \vert  \widetilde{\widetilde{\Psi}}_{n}- \widetilde{\Psi}_{n}\big \vert^2 \right]\right)\nonumber \\
& \quad +(1+K_g)^2 C(\lambda_1,\lambda_3) \left(\big \vert Y_0-y \big \vert^2 +  \Delta t+ \textnormal{Error}^{\rho}\right). \nonumber 
\end{align}
  {By defining} the function 
$$H(\lambda_1):=\left(1+K_g\right)^2\left(1 + 5(K_f\right)^2 \lambda_1^{-1}) e^{T A_5(  {\bar K} , \lambda_1)}$$ and $  {\bar H} :=\min_{  {\lambda_1} \in \mathbb{R}_+} H(  {\lambda_1})$,   {we conclude with $\lambda_1:=\textnormal{argmin}_{x \in \mathbb{R}_+} H(x)$ that}
\begin{align}
 \mathbb{E}\left[  \vert g(\widetilde{X}_M)-\widetilde{Y}_M\vert^2 \right] &\leq   {\bar H} (1+\lambda_3) \Delta t \sum_{n=0}^{M-1}\left(\bE \left[ \big \vert  \widetilde{\widetilde{Z}}_{n}- \widetilde{Z}_{n} \big \vert^2 \right] + \bE \left[ \big \vert  \widetilde{\widetilde{\Psi}}_{n}- \widetilde{\Psi}_{n} \big \vert^2 \right]\right) \nonumber \\
& \quad + (1+K_g)^2C(\lambda_3) \left(\big \vert Y_0-y \big \vert^2 +  \Delta t+ \textnormal{Error}^{\rho}\right).\nonumber 
\end{align}

\end{proof}
We briefly discuss the result in Proposition \ref{LemmaSecondEstimateAuxiliary}. The derived estimate for the a priori estimate depends on the following quantities
\begin{align}\label{eq:DependenceANNs}
\bE \left[ \big \vert  \widetilde{\widetilde{Z}}_{n}- \widetilde{Z}_{n} \big \vert^2 \right],  \quad \bE \left[ \big \vert  \widetilde{\widetilde{\Psi}}_{n}- \widetilde{\Psi}_{n} \big \vert^2 \right], \quad \big \vert Y_0-y \big \vert^2, \quad  \textnormal{Error}^{\rho},
\end{align}
which are all connected to the families of ANNs we use. However, note that the first two terms in \eqref{eq:DependenceANNs} depend on $\widetilde{\widetilde{Z}}_{n}, \widetilde{\widetilde{\Psi}}_{n}$ which cannot be expressed in terms of families of ANNs. Thus, we now try to simplify them and state next an auxiliary result.

\begin{lemma} \label{lemma:SecondEstimateAuxiliary3}
  Let Assumptions \ref{assump:ExistenceUniquenessFBSDE} \textbf{(A1)}, \textbf{(A2)} and \ref{assump:NeuralNetwork} hold. Let $(\widetilde{X}_n)_{n =0,...,M}$ be given by \eqref{eq:SchemeDeepSolverOneNeuralNetworkRewritten1}. For $\Delta t < (K_f)^{-1}$ there exist deterministic functions $\widetilde{U}_n: \mathbb{R}^d \to \mathbb{R}$, $\widetilde{V}_n: \mathbb{R}^d \to \mathbb{R}^d$ and $\widetilde{L}_n: \mathbb{R}^d \to \mathbb{R}^d$ for $n=0,...,M$ such that $\widetilde{Y}_n':=\widetilde{U}_n(\widetilde{X}_n)$, $\widetilde{Z}_n':=\widetilde{V}_n(\widetilde{X}_n)$ and $\widetilde{\Psi}_n':=\widetilde{L}_n(\widetilde{X}_n)$ satisfy 
    \begin{align} \label{eq:SchemeLinkFeynmanKac}
 \begin{cases}
	\widetilde{Y}_M'&=g(\widetilde{X}_M), \\
	\widetilde{Z}_n'&= \frac{1}{\Delta t} \mathbb{E} \left[\widetilde{Y}_{n+1}' \Delta W_n \vert \mathcal{F}_n \right], \\
	\widetilde{\Psi}_n'&= \frac{1}{\Delta t} \mathbb{E} \left[ \widetilde{Y}_{n+1}'\int_{t_n}^{t_{n+1}}\int_{\mathbb{R}^d} \widetilde{N}(\textnormal{d}r,\textnormal{d}z) \vert \mathcal{F}_n \right], \\
	\widetilde{Y}_n'&= \mathbb{E}\left[\widetilde{Y}_{n+1}' \vert \mathcal{F}_n \right]+f\left( t_n, \widetilde{X}_n, \widetilde{Y}_n', \widetilde{Z}_n', \widetilde{\Psi}_n'\right) \Delta t,
	\end{cases}
 \end{align}
 for $n=0,...,M-1$.
\end{lemma}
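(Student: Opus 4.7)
The plan is to argue by backward induction on $n$, exploiting the Markov structure of the discrete forward chain $(\widetilde X_n)_{n=0,\ldots,M}$ defined in \eqref{eq:SchemeDeepSolverOneNeuralNetworkRewritten1}. Because $\widetilde X_{n+1}$ is a measurable function of $\widetilde X_n$ together with the $\cF_n$-independent block consisting of $\Delta W_n$ and the restriction of $N$ to $(t_n,t_{n+1}]\times\bR^d$, every conditional expectation given $\cF_n$ of a measurable function of $\bigl(\widetilde X_{n+1},\Delta W_n,\int_{t_n}^{t_{n+1}}\!\!\int_{\bR^d}\widetilde N(\textnormal{d}r,\textnormal{d}z)\bigr)$ reduces to a deterministic function of $\widetilde X_n$. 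This Markov reduction is the only probabilistic ingredient required.

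At the terminal time set $\widetilde U_M(x):=g(x)$, which is Borel measurable by Assumption \ref{assump:ExistenceUniquenessFBSDE} \textbf{(A1)}, so $\widetilde Y'_M=\widetilde U_M(\widetilde X_M)$. For the induction step, assume $\widetilde Y'_{n+1}=\widetilde U_{n+1}(\widetilde X_{n+1})$ for a Borel measurable $\widetilde U_{n+1}$. Denote by $\Phi_n(x;\cdot)$ the one-step transition map sending $\widetilde X_n=x$ to $\widetilde X_{n+1}$, read off from \eqref{eq:SchemeDeepSolverOneNeuralNetworkRewritten1}, and define
\begin{align*}
G_n(x)&:=\bE\bigl[\widetilde U_{n+1}(\Phi_n(x;\cdot))\bigr],\\
\widetilde V_n(x)&:=\tfrac{1}{\Delta t}\,\bE\bigl[\widetilde U_{n+1}(\Phi_n(x;\cdot))\,\Delta W_n\bigr],\\
\widetilde L_n(x)&:=\tfrac{1}{\Delta t}\,\bE\Bigl[\widetilde U_{n+1}(\Phi_n(x;\cdot))\!\int_{t_n}^{t_{n+1}}\!\!\int_{\bR^d}\widetilde N(\textnormal{d}r,\textnormal{d}z)\Bigr].
\end{align*}
Each of these is Borel measurable by Fubini, and by the Markov reduction above coincides a.s.\ with $\bE[\widetilde Y'_{n+1}\vert\cF_n]$, $\widetilde Z'_n$ and $\widetilde\Psi'_n$ once evaluated at $\widetilde X_n$.

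It then remains to define $\widetilde U_n(x)$ by the implicit equation
\[
\widetilde U_n(x)=G_n(x)+f\bigl(t_n,x,\widetilde U_n(x),\widetilde V_n(x),\widetilde L_n(x)\bigr)\,\Delta t,
\]
and this is exactly where the hypothesis $\Delta t<(K_f)^{-1}$ enters. By the Lipschitz continuity of $f$ in the $y$-variable (Assumption \ref{assump:ExistenceUniquenessFBSDE} \textbf{(A1)}), for each fixed $x$ the right-hand side is a strict contraction in $y$ with modulus $K_f\Delta t<1$, so Banach's fixed-point theorem yields a unique scalar $\widetilde U_n(x)$. The main obstacle I anticipate is establishing Borel measurability of $x\mapsto\widetilde U_n(x)$; I would realise $\widetilde U_n$ as the pointwise limit of the Picard iterates $\widetilde U_n^{(k+1)}(x):=G_n(x)+f\bigl(t_n,x,\widetilde U_n^{(k)}(x),\widetilde V_n(x),\widetilde L_n(x)\bigr)\,\Delta t$ starting from $\widetilde U_n^{(0)}\equiv 0$, each of which is Borel measurable, and invoke the uniform contraction rate $K_f\Delta t$ to secure convergence at every $x$. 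Setting $\widetilde Y'_n:=\widetilde U_n(\widetilde X_n)$, $\widetilde Z'_n:=\widetilde V_n(\widetilde X_n)$ and $\widetilde\Psi'_n:=\widetilde L_n(\widetilde X_n)$ then verifies \eqref{eq:SchemeLinkFeynmanKac} at step $n$, closing the induction.
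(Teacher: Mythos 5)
Your proof is correct and follows the same skeleton as the paper's: backward induction from $\widetilde U_M=g$, the Markov reduction of conditional expectations given $\cF_n$ to deterministic functions of $\widetilde X_n$ via the independence of $(\Delta W_n,\Delta\widetilde N_n)$ from $\cF_n$, and a Banach fixed-point argument for the implicit equation defining $\widetilde Y_n'$, with $\Delta t<(K_f)^{-1}$ supplying the contraction modulus $K_f\Delta t<1$. The one place you diverge is the venue of the fixed-point argument: the paper works on the Banach space $H_n=L^2(\Omega,\sigma(\widetilde X_n),\bP)$, showing that $\Phi_n(Y)=\bE[\widetilde Y_{n+1}'\vert\cF_n]+f(t_n,\widetilde X_n,Y,\widetilde Z_n',\widetilde\Psi_n')\Delta t$ maps $H_n$ into itself (this is where the square-integrability computation using Lemma \ref{lemma:SchemeAdaptedIntegrable} and the growth conditions enters) and is a contraction there, which delivers measurability of $\widetilde U_n$ for free since elements of $H_n$ are by definition measurable functions of $\widetilde X_n$. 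You instead contract pointwise in $x$ on $\bR$ and recover Borel measurability of $\widetilde U_n$ as a pointwise limit of measurable Picard iterates; this is more elementary and sidesteps the $L^2$ verification, but it implicitly requires $G_n(x)$, $\widetilde V_n(x)$ and $\widetilde L_n(x)$ to be finite at each $x$ (or at least for a.e.\ $x$ with respect to the law of $\widetilde X_n$, redefining them arbitrarily on the null set), which you should state explicitly; the integrability needed for this is exactly what the paper's $L^2$ computation, via Assumption \ref{assump:NeuralNetwork} and Lemma \ref{lemma:SchemeAdaptedIntegrable}, provides. With that small point made explicit, both routes are sound and essentially interchangeable.
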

\begin{proof}
The proof uses similar arguments as the one in Lemma 5 in \cite{Han_Long_2020}. We proceed by backward induction.
For $n=M$ it holds $\widetilde{U}_M(x):=g(x)$ for $x \in \mathbb{R}^d$ and as before w.l.o.g. we assume $\widetilde{Z}'_M=\widetilde{\Psi}'_M:=0.$ \\
We assume that the statements hold for $n+1$. In particular, there exists a function $\widetilde{U}_{n+1}:\mathbb{R}^d \to \mathbb{R}$ such that $\widetilde{Y}_{n+1}'= \widetilde{U}_{n+1}(\widetilde{X}_{n+1}).$ As
 $$
\widetilde{X}_{n+1}=\widetilde{X}_n +b(\widetilde{X}_n) \Delta t+ \sigma(\widetilde{X}_n)^{\top}\Delta W_n + \sum_{i=N([0,t_n], \mathbb{R}^d)+1}^{N([0,t_{n+1}], \mathbb{R}^d)} \Gamma(\widetilde{X}_n, z_i) - \Delta t \int_{\mathbb{R}^d} \Gamma(\widetilde{X}_n,z) \nu (\textnormal{d}z),
$$
there exists a deterministic function $\bar{U}_{n}: \mathbb{R}^d \times \mathbb{R} \times \mathbb{R} \to \mathbb{R}$ such that 
$$
\widetilde{Y}_{n+1}'= \widetilde{U}_{n+1}(\widetilde{X}_{n+1})= \bar{U}_n(\widetilde{X}_n, \Delta W_n, \Delta \widetilde{N}_n)
$$
  { with $\Delta \widetilde{N}_n:=\int_{t_n}^{t_{n+1}}\int_{\mathbb{R}^d} \widetilde{N}(\textnormal{d}r,\textnormal{d}z)$. }
  {As} $\widetilde{Z}_n'$ needs to satisfy \eqref{eq:SchemeLinkFeynmanKac},   {we have}
$$
\widetilde{Z}_n'= \frac{1}{\Delta t} \mathbb{E} \left[ \widetilde{Y}_{n+1}' \Delta W_n \vert \mathcal{F}_n \right]=\frac{1}{\Delta t} \mathbb{E} \left[ \bar{U}_n(\widetilde{X}_n, \Delta W_n, \Delta \widetilde{N}_n) \Delta W_n \vert \mathcal{F}_n \right].
$$
By the independence of $\Delta W_n$, $\Delta \widetilde{N}$ to $\mathcal{F}_n$, there exists a deterministic function $\widetilde{V}_n: \mathbb{R}^d \to \mathbb{R}^d$ with $\widetilde{Z}_n'=\widetilde{V}_{n}'(  {\widetilde{X}_n}).$
Similarly, we get for $\widetilde{\Psi}_n'$
\begin{equation*}
    \widetilde{\Psi}_n'= \frac{1}{\Delta t} \mathbb{E} \left[ \widetilde{Y}_{n+1}'\int_{t_n}^{t_{n+1}}\int_{\mathbb{R}^d} \widetilde{N}(\textnormal{d}r,\textnormal{d}z) \Big \vert \mathcal{F}_n \right]= \frac{1}{\Delta t} \mathbb{E} \left[ \bar{U}_n(\widetilde{X}_n, \Delta W_n, \Delta \widetilde{N}_n) \int_{t_n}^{t_{n+1}}\int_{\mathbb{R}^d} \widetilde{N}(\textnormal{d}r,\textnormal{d}z) \Big \vert \mathcal{F}_n \right],
\end{equation*}
and we conclude as before that there exists a deterministic function $\widetilde{L}_n: \mathbb{R}^d \to \mathbb{R}$ such that  $\widetilde{\Psi}_n'=\widetilde{L}_n(\widetilde{X}_n)$. \\
We set $H_n:=L^2(\Omega, \sigma(\widetilde{X}_n), \mathbb{P})$, which is a Banach space and can be represented as
$$
H_n:=\left \{ Y=\phi(\widetilde{X}_n): \phi \text{ is measurable and } \mathbb{E}\left[\vert Y\vert^2\right] < \infty \right \}. 
$$
We define for $Y \in H_n$ the mapping
$$
\Phi_n(Y):= \mathbb{E}\left[\widetilde{Y}_{n+1}' \vert \mathcal{F}_n \right]+f\left( t_n, \widetilde{X}_n, {Y}, \widetilde{Z}_n', \widetilde{\Psi}_n'\right) \Delta t.
$$
Next, we verify that $\Phi_n(Y)$ is square-integrable. It holds
\begin{align}
&\mathbb{E}\left[ \left \vert \mathbb{E}\left[\widetilde{Y}_{n+1}' \vert \mathcal{F}_n \right]+f\left( t_n, \widetilde{X}_n, {Y}, \widetilde{Z}_n', \widetilde{\Psi}_n'\right) \Delta t\right \vert^2 \right]\nonumber \\
& \leq 2 \mathbb{E} \left[ \mathbb{E}\left[\big\vert \widetilde{Y}_{n+1}'\big \vert  \vert \mathcal{F}_n \right]^2\right]+ 2(\Delta t )^2 \mathbb{E} \left[ \left \vert f\left( t_n, \widetilde{X}_n, {Y}, \widetilde{Z}_n', \widetilde{\Psi}_n'\right)\right \vert^2 \right ] \nonumber \\
& \leq 2 \mathbb{E} \left[ \mathbb{E}\left[\big\vert \widetilde{Y}_{n+1}' \big\vert^2 \vert \mathcal{F}_n \right]\right]+ 2(\Delta t )^2 \mathbb{E} \left[ \left \vert f\left( t_n, \widetilde{X}_n, {Y}, \widetilde{Z}_n', \widetilde{\Psi}_n'\right)\right \vert^2 \right ]\nonumber \\
& =2 \mathbb{E} \left[\big \vert \widetilde{Y}_{n+1}' \big\vert^2 \right]+ 2(\Delta t )^2 \mathbb{E} \left[ \left \vert f\left( t_n, \widetilde{X}_n, {Y}, \widetilde{Z}_n', \widetilde{\Psi}_n'\right)\right \vert^2 \right ] \nonumber \\
&= 2\mathbb{E} \left[ \big \vert \widetilde{Y}_{n+1}' \big\vert^2  \right]+ 4(\Delta t )^2 \mathbb{E} \left[ \left \vert f\left( t_n, \widetilde{X}_n, {Y}, \widetilde{Z}_n', \widetilde{\Psi}_n'\right)-f(0,0,0,0,0)\right \vert^2 \right ] + 4(\Delta t )^2 \vert f(0,0,0,0,0)\vert^2 \nonumber \\
& \leq 2\mathbb{E} \left[\big\vert \widetilde{Y}_{n+1}' \big \vert^2 \right]+ 4(\Delta t )^2 \mathbb{E} \left[   \Big\vert K_{f,t} \vert t_n\vert^{\frac{1}{2}} + K_f \left(\vert \widetilde{X}_n \vert +\vert Y\vert + \vert \widetilde{Z}_n' \vert +\vert \widetilde{\Psi}_n' \vert \right) \Big \vert^2\right ] + 4(\Delta t )^2 C \nonumber \\
&  \leq 2\mathbb{E} \left[\big\vert \widetilde{Y}_{n+1}' \big\vert^2 \right]+ 8(\Delta t )^2 \mathbb{E} \left[   (K_{f,t})^2 \vert t_n\vert + (K_f)^2 \left(\vert \widetilde{X}_n \vert^2 +\vert Y\vert^2 + \vert \widetilde{Z}_n' \vert^2 +\vert \widetilde{\Psi}_n' \vert^2 \right)\right ] + 4(\Delta t )^2 C \nonumber  \\
& < \infty,
\end{align}
as by Lemma \ref{lemma:SchemeAdaptedIntegrable} $\widetilde{X}_n$ is square-integrable. Moreover, $Y \in H_n$ implies the square-integrability of $Y$ and $ \widetilde{\Psi}'_{n}, \widetilde{Z}'_n$ are also in $L^2.$\\ 
By using the representations of $\widetilde{Z}_n'$, $\widetilde{\Psi}_n'$ and $\widetilde{Y}_{n+1}'$, we can express $\Phi_n(Y)$ as a deterministic function of $\widetilde{X}_n$, i.e. $\Phi_n(Y) \in H_n$. Next, we prove that $\Phi_n$ is a contraction. To do so, let $Y_1,Y_2 \in H_n$, then it holds
\begin{align}
&\mathbb{E} \left[\big \vert \Phi_n(Y_1)-\Phi_n(Y_2) \big \vert^2\right]\nonumber \\
&= \mathbb{E} \left[\left \vert \mathbb{E}\left[\widetilde{Y}_{n+1}' \vert \mathcal{F}_n \right]+f\left( t_n, \widetilde{X}_n, {Y}_1, \widetilde{Z}_n', \widetilde{\Psi}_n'\right) \Delta t-\mathbb{E}\left[\widetilde{Y}_{n+1}' \vert \mathcal{F}_n \right]-f\left( t_n, \widetilde{X}_n, {Y}_2, \widetilde{Z}_n', \widetilde{\Psi}_n'\right) \Delta t \right\vert^2\right] \nonumber \\
&= \mathbb{E} \left[\left \vert f\left( t_n, \widetilde{X}_n, {Y}_1, \widetilde{Z}_n', \widetilde{\Psi}_n'\right) \Delta t-f\left( t_n, \widetilde{X}_n, {Y}_2, \widetilde{Z}_n', \widetilde{\Psi}_n'\right) \Delta t \right\vert^2\right] \nonumber \\
& \leq (\Delta t)^2 (K_f)^2\mathbb{E} \left[\big \vert Y_1-Y_2\big\vert^2\right] \nonumber,
\end{align}
by using the Lipschitz continuity of $f$. This allows to conclude that for $\Delta t< (K_f)^{-1}$ the mapping $\Phi_n$ is a contraction on $H_n$ and thus by the Banach fixed-point theorem there exists a unique fixed point $Y^*=\phi_n^*(\widetilde{X}_n)\in H_n$ such that $Y^*=\Phi_n(Y^*).$ By setting $\widetilde{U}_n:=\phi_n^*$ the result is proven.
\end{proof}
We now state our main result for the a priori estimate for the finite activity case. For this let us define the following objects along the lines of \cite{delong}, based on \cite{SOLE2007165}. We consider a univariate L\'evy process with characteristic triplet $(b,\sigma,\nu)$ for $\nu$ a L\'evy measure such that $\int_{\mathbb{R}}|z|^2 \nu(d z)<\infty$, $b\in\mathbb{R}$ and $\sigma \geq 0$. For $A \in \mathscr{B}([0, T]) \otimes \mathscr{B}(\mathbb{R})$ let $A(0)=\{t \in[0, T] ;(t, 0) \in A\}$ and $A^{\prime}=A \backslash A(0)$ we define the finite measure
$$
v(A)=\int_{A(0)} \sigma^2 d t+\int_{A^{\prime}} z^2 v(d z) d t.
$$
We define the martingale-valued random measure $\Upsilon$
$$
\Upsilon(A)=\int_{A(0)} \sigma {\textnormal{d}W_t}+\int_{A^{\prime}} z \widetilde{N}(\textnormal{d} t, \textnormal{d} z), \quad A \in \mathscr{B}([0, T]) \otimes \mathscr{B}(\mathbb{R}^{  {d}}).
$$
We also define
$$
I_n\left(\varphi_n\right)=\int_{([0, T] \times \mathbb{R})^n} \varphi\left(\left(t_1, z_1\right), \ldots\left(t_n, z_n\right)\right) \Upsilon\left(\textnormal{d} t_1, \textnormal{d} z_1\right) \cdot \ldots \cdot \Upsilon\left(\textnormal{d} t_n, \textnormal{d} z_n\right),
$$
and introduce the space $\mathbb{D}^{1,2}(\mathbb{R})$ of square integrable random variables  which are measurable with respect to the natural filtration generated by a L\'evy process and have the representation $$\xi=\sum_{n=0}^{\infty} I_n\left(\varphi_n\right),$$
which is the space of random variables for which the Malliavin derivative is defined as follows: we set $D \xi: \Omega \times$ $[0, T] \times \mathbb{R} \rightarrow \mathbb{R}$ to be the stochastic process of the form
$$
D_{t, z} \xi=\sum_{n=1}^{\infty} n I_{n-1}\left(\varphi_n((t, z), \cdot)\right).
$$
In the approach of \cite{SOLE2007165}, $D_{t, 0} \xi = D_{t} \xi$ is the Malliavin derivative with respect to the (univariate) Wiener process, whereas $D_{t,z}\xi$ represents the derivative with respect to the (univariate) jump component. Notice that in our setting the underlying L\'evy process takes values on $\mathbb{R}^d$: at the cost of heavier notations the construction of \cite{SOLE2007165} has been generalized to the multivariate case, which is relevant for our purposes in order to define $D\xi : \Omega\times [0,T]\times {\mathbb R}^d \to \mathbb R $, in \cite[Section 3.11]{phdNavarro} the construction is performed by combining univariate L\'evy processes as above.

\begin{theorem} \label{theorem:EstimateFinalFiniteActivity}
 Let Assumptions \ref{assump:ExistenceUniquenessFBSDE} \textbf{(A1)}-\textbf{(A3)}, \ref{assump:FiniteActivity} and \ref{assump:NeuralNetwork} hold.  Let $(\widetilde{X}_n,\widetilde{Y}_n,\widetilde{Z}_n, \widetilde{\Psi}_n)_{ n=0,..., M}$ be defined in \eqref{eq:SchemeDeepSolverOneNeuralNetworkRewritten1}. Given $ \lambda_3,\lambda_4>0$ there exists a constant $C(\lambda_3)$ such that for sufficiently small $\Delta t$ it holds
\begin{align}
    & \mathbb{E} \left[ \big\vert \widetilde{Y}_M - g(\widetilde{X}_M)\big\vert^2 \right] \nonumber \\
& \leq   3   {\bar H} \left(1+\lambda_3\right) \left(1+\lambda_{4}\right) \left( 4C\Delta t + \sum_{n=0}^{M-1}\mathbb{E}{ \left[\Big \vert \mathbb{E}\Big[ \widetilde{\widetilde{Z}}_n \vert \widetilde{X}_n\Big] - \widetilde{Z}_n \Big \vert^2 \right]} + \sum_{n=0}^{M-1} {\mathbb{E} \left[\Big \vert \mathbb{E}\Big[ \widetilde{\widetilde{\Psi}}_n \vert \widetilde{X}_n\Big] - \widetilde{\Psi}_n \Big \vert^2 \right]}\right)\nonumber \\
& \quad +  (1+K_g)^2C(\lambda_3) \left({\big \vert Y_0-y \big \vert^2} +  \Delta t+ {\textnormal{Error}^{\rho}}\right), \nonumber
\end{align}
where $\widetilde{\widetilde{Z}}_{n}=: \widetilde{\widetilde{Z}}_{t_n}$ and $ \widetilde{\widetilde{\Psi}}_{n}=: \widetilde{\widetilde{\Psi}}_{t_n}$ are defined in \eqref{eq:ProcessesBouchardElie}. Moreover, $  {\bar H} :=\min_{x \in \mathbb{R}_+} H(x)$ with $H$ given in \eqref{eq:DefinitionFunctionH} and $\textnormal{Error}^{\rho}$ is introduced in \eqref{eq:DefiError}. If 
\begin{align} \label{eq:MallivanDifferentiable}
    \xi:=g(X_T^{t,x})+\int_0^T  f\left(r,X_{r-}^{t,x}, Y_{r-}^{t,x}, Z_r^{t,x}, \int_{\mathbb{R}^d} U_r^{t,x}(z) \nu (\textnormal{d}z) \right)\textnormal{d}r \in \mathbb{D}^{1,2}(\mathbb{R}),
\end{align} then we can substitute $\widetilde{\widetilde{Z}}_n$ and $\widetilde{\widetilde{\Psi}}_n$ by $Z_{t_n}$ and $\Psi_{t_n},$ respectively.
\end{theorem}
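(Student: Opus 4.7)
The starting point is Proposition~\ref{LemmaSecondEstimateAuxiliary}, which already controls $\mathbb{E}[|g(\widetilde{X}_M)-\widetilde{Y}_M|^2]$ by $\bar H(1+\lambda_3)\Delta t\sum_{n=0}^{M-1}(\mathbb{E}[|\widetilde{\widetilde{Z}}_n-\widetilde{Z}_n|^2]+\mathbb{E}[|\widetilde{\widetilde{\Psi}}_n-\widetilde{\Psi}_n|^2])$ plus the ``training error'' $(1+K_g)^2 C(\lambda_3)(|Y_0-y|^2+\Delta t+\mathrm{Error}^\rho)$. The plan is therefore to rewrite each of the two control-process sums so that only $\sigma(\widetilde{X}_n)$-measurable objects (matching what the neural networks can actually represent) appear, plus an $O(\Delta t)$ discretization remainder that will be absorbed into the $4C\Delta t$ term.

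The natural $\sigma(\widetilde{X}_n)$-measurable proxies are the deterministic functions $\widetilde{Z}'_n=\widetilde{V}_n(\widetilde{X}_n)$ and $\widetilde{\Psi}'_n=\widetilde{L}_n(\widetilde{X}_n)$ produced by Lemma~\ref{lemma:SecondEstimateAuxiliary3}. A key observation I would highlight is that these proxies coincide with $\overline{Z}_n$ and $\overline{\Psi}_n$ from the Bouchard--Elie scheme \eqref{eq:SchemeDelong}: the recursions \eqref{eq:SchemeLinkFeynmanKac} and \eqref{eq:SchemeDelong} are identical once one notes that $\widetilde{X}$ and $\overline{X}$ satisfy the same discretized forward SDE started at $x$. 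With this identification in hand, I would apply Young's inequality with parameter $\lambda_4>0$, namely $|\widetilde{\widetilde{Z}}_n-\widetilde{Z}_n|^2\leq (1+\lambda_4^{-1})|\widetilde{\widetilde{Z}}_n-\mathbb{E}[\widetilde{\widetilde{Z}}_n|\widetilde{X}_n]|^2+(1+\lambda_4)|\mathbb{E}[\widetilde{\widetilde{Z}}_n|\widetilde{X}_n]-\widetilde{Z}_n|^2$, and bound the ``variance'' piece via its best $\sigma(\widetilde{X}_n)$-measurable approximant: $\mathbb{E}[|\widetilde{\widetilde{Z}}_n-\mathbb{E}[\widetilde{\widetilde{Z}}_n|\widetilde{X}_n]|^2]\leq \mathbb{E}[|\widetilde{\widetilde{Z}}_n-\widetilde{Z}'_n|^2]=\mathbb{E}[|\widetilde{\widetilde{Z}}_n-\overline{Z}_n|^2]$.

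The remainder $\Delta t\sum_n\mathbb{E}[|\widetilde{\widetilde{Z}}_n-\overline{Z}_n|^2]$ is then of order $\Delta t$: using that $\widetilde{\widetilde{Z}}_n$ and $\overline{Z}_n$ are constant in time on each slot $[t_n,t_{n+1})$, I insert the true $Z_t$ and apply $|\widetilde{\widetilde{Z}}_n-\overline{Z}_n|^2\leq 2|\widetilde{\widetilde{Z}}_n-Z_t|^2+2|Z_t-\overline{Z}_n|^2$ pointwise; integration in time and summation in $n$, combined with Theorem~\ref{theorem:Bouchard_Elie_Path_regularity} for the first term and Theorem~\ref{theorem:ErrorEstimateSchemeDelong} for the second, yield the desired $O(\Delta t)$ bound. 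The same chain of steps applied to $\widetilde{\widetilde{\Psi}}_n-\widetilde{\Psi}_n$, together with a rearrangement of the various accumulated multiplicative constants into the global prefactor $3\bar H(1+\lambda_3)(1+\lambda_4)$, completes the first half of the theorem.

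For the Malliavin statement, the goal is to replace $\widetilde{\widetilde{Z}}_n=\tfrac{1}{\Delta t}\mathbb{E}[\int_{t_n}^{t_{n+1}}Z_s\,\mathrm{d}s\,|\,\mathcal{F}_n]$ by the pointwise value $Z_{t_n}$, and analogously $\widetilde{\widetilde{\Psi}}_n$ by $\Psi_{t_n}$, at the price of an additional $O(\Delta t)$ discretization error that again fits inside the $4C\Delta t$ term. Under \eqref{eq:MallivanDifferentiable}, the Clark--Ocone representation of \cite{SOLE2007165} in the extended multivariate L\'evy setting of \cite{phdNavarro} gives $Z_t=\mathbb{E}[D_{t,0}\xi\,|\,\mathcal{F}_t]$ and a parallel expression for the jump component via $D_{t,z}\xi$; combined with an $L^2$ time-regularity estimate on the Malliavin derivative, this forces $\mathbb{E}[|\widetilde{\widetilde{Z}}_n-Z_{t_n}|^2]\leq C\Delta t$ and the same bound for $\widetilde{\widetilde{\Psi}}_n-\Psi_{t_n}$. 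I expect this last step to be the main obstacle: making the time-averaging substitution quantitative in the infinite-activity jump regime requires the duality between the Malliavin derivative and the Skorohod integral with respect to the marked martingale-valued measure $\Upsilon$ introduced before the theorem, and the corresponding conditional $L^2$-continuity estimate is noticeably more delicate than its purely diffusive counterpart.
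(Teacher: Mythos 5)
For the main inequality your argument is essentially the paper's. You start from Proposition \ref{LemmaSecondEstimateAuxiliary}, identify the Bouchard--Elie quantities $\overline{Z}_n,\overline{\Psi}_n$ with the $\sigma(\widetilde{X}_n)$-measurable functions $\widetilde V_n(\widetilde X_n),\widetilde L_n(\widetilde X_n)$ of Lemma \ref{lemma:SecondEstimateAuxiliary3} (using $\widetilde X_n=\overline X_n$), and control $\Delta t\sum_n\mathbb{E}[|\widetilde{\widetilde{Z}}_n-\overline Z_n|^2]$ by inserting $Z_t$ and invoking Theorems \ref{theorem:Bouchard_Elie_Path_regularity} and \ref{theorem:ErrorEstimateSchemeDelong} --- exactly the paper's three ingredients. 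Your Young decomposition is slightly cleaner than the paper's (which inserts $\pm\overline Z_n\pm\mathbb{E}[\overline Z_n\,|\,\widetilde X_n]\pm\mathbb{E}[\widetilde{\widetilde Z}_n\,|\,\widetilde X_n]$ and then observes that $\mathbb{E}[|\overline Z_n-\mathbb{E}[\overline Z_n|\widetilde X_n]|^2]=0$); using the best-approximant property of conditional expectation instead is equivalent, up to placing the factor $(1+\lambda_4^{-1})$ rather than $(1+\lambda_4)$ on the $O(\Delta t)$ remainder, which is harmless.

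The genuine gap is in the Malliavin part. You propose to show $\mathbb{E}[|\widetilde{\widetilde Z}_n-Z_{t_n}|^2]\le C\Delta t$ via an $L^2$ time-regularity estimate for the Malliavin derivative, and you yourself flag that estimate as the unproved ``main obstacle''. That route is both harder than necessary and, as set up, does not obviously close: the terms $\mathbb{E}[|\mathbb{E}[\widetilde{\widetilde Z}_n|\widetilde X_n]-\widetilde Z_n|^2]$ are summed over $n=0,\dots,M-1$, so a per-slot error of order $\Delta t$ accumulates to $O(1)$ unless you carefully track a $\Delta t$ weight that is not present in the statement. The paper's argument requires no regularity estimate at all: under \eqref{eq:MallivanDifferentiable} the Clark--Ocone formula gives $Z_s=\mathbb{E}[D_s\xi\,|\,\mathcal F_s]$ and $U_s(z)=z\,\mathbb{E}[D_{s,z}\xi\,|\,\mathcal F_s]$, whence $Z$ and $U(\cdot)(z)$ are martingales; then
\begin{equation*}
\widetilde{\widetilde Z}_n=\frac{1}{\Delta t}\int_{t_n}^{t_{n+1}}\mathbb{E}[Z_s\,|\,\mathcal F_n]\,\textnormal{d}s=Z_{t_n},
\qquad
\widetilde{\widetilde\Psi}_n=\frac{1}{\Delta t}\int_{t_n}^{t_{n+1}}\int_{\mathbb{R}^d}\mathbb{E}[U_s(z)\,|\,\mathcal F_n]\,\nu(\textnormal{d}z)\,\textnormal{d}s=\Psi_{t_n},
\end{equation*}
by Fubini, so the substitution is an exact identity and incurs no additional error. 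You should replace your final step by this martingale argument.
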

\begin{proof}
By Proposition \ref{LemmaSecondEstimateAuxiliary} it holds for $\lambda_3>0$
  \begin{align*}
    \mathbb{E}\left[ \big \vert g(\widetilde{X}_M)-\widetilde{Y}_M \big \vert^2 \right] &\leq   {\bar H} (1+\lambda_3) \Delta t \sum_{n=0}^{M-1}\left(\bE \left[ \big \vert  \widetilde{\widetilde{Z}}_{n}- \widetilde{Z}_{n} \big \vert^2 \right] + \bE \left[ \big \vert  \widetilde{\widetilde{\Psi}}_{n}- \widetilde{\Psi}_{n} \big \vert^2 \right]\right) \nonumber \\
& \quad + (1+K_g)^2C(\lambda_3) \left(\big \vert Y_0-y \big \vert^2 +  \Delta t+ \textnormal{Error}^{\rho}\right).
  \end{align*}
  {In the following}, we aim to   {find estimates for} the terms $\bE \left[ \big \vert  \widetilde{\widetilde{Z}}_{n}- \widetilde{Z}_{n} \big \vert^2 \right]$ and $\bE \left[ \big \vert  \widetilde{\widetilde{\Psi}}_{n}- \widetilde{\Psi}_{n} \big \vert^2 \right].$ \\ 
{To do so, let $(\overline{X}_n, \overline{Y}_n,\overline{Z}_n,\overline{\Psi}_n)_{n=0,...,M}$,$(\widetilde{X}_n, \widetilde{Y}_n, \widetilde{Z}_n, \widetilde{\Psi}_n)_{n =0,...,M}$ be given by the schemes \eqref{eq:SchemeDelong}, \eqref{eq:SchemeDeepSolverOneNeuralNetworkRewritten1}, respectively,} and $\widetilde{\widetilde{Z}}_n$, $\widetilde{\widetilde{\Psi}}_n$ defined in \eqref{eq:ProcessesBouchardElie}. It holds for any $\lambda_4>0$
\begin{align}
    &\bE \left[ \big \vert  \widetilde{\widetilde{Z}}_{n}- \widetilde{Z}_{n} \big \vert^2 \right]\nonumber \\
    & =\bE \left[ \Big \vert  \widetilde{\widetilde{Z}}_{n}- \widetilde{Z}_{n} \pm \overline{Z}_n \pm \mathbb{E}\Big[ \overline{Z}_n \vert \widetilde{X}_n\Big] \pm \mathbb{E}\Big[ \widetilde{\widetilde{Z}}_n \vert \widetilde{X}_n\Big] \Big \vert^2 \right] \nonumber \\
    & \leq \left(1+\lambda_4^{-1}\right) \mathbb{E} \left[ \left \vert \overline{Z}_n -\mathbb{E}\Big[ \overline{Z}_n \vert \widetilde{X}_n\Big] \right \vert^2 \right] \nonumber \\
    & \quad + \left(1+ \lambda_4\right) \mathbb{E} \left[ \Big \vert \Big (\widetilde{\widetilde{Z}}_n - \overline{Z}_n\Big) - \mathbb{E}\Big[ \widetilde{\widetilde{Z}}_n - \overline{Z}_n \vert \widetilde{X}_n\Big]+ \Big(  \mathbb{E}\Big[ \widetilde{\widetilde{Z}}_n \vert \widetilde{X}_n\Big] - \widetilde{Z}_n\Big) \Big \vert^2 \right] \nonumber \\
    & \leq \left(1+\lambda_4^{-1}\right) \mathbb{E} \left[ \Big \vert \overline{Z}_n -\mathbb{E}\Big[ \overline{Z}_n \vert \widetilde{X}_n\Big] \Big \vert^2 \right] \nonumber \\
    & \quad + 3\left(1+ \lambda_4\right) \left(  \mathbb{E} \left[ \Big \vert \widetilde{\widetilde{Z}}_n - \overline{Z}_n \Big \vert^2 \right]+ \mathbb{E} \left[ \Big \vert \mathbb{E}\Big[ \widetilde{\widetilde{Z}}_n - \overline{Z}_n \vert \widetilde{X}_n\Big] \Big \vert^2\right]+ \mathbb{E} \left[\Big \vert \mathbb{E}\Big[ \widetilde{\widetilde{Z}}_n \vert \widetilde{X}_n\Big] - \widetilde{Z}_n \Big \vert^2 \right]\right) \nonumber \\
     & \leq \left(1+\lambda_4^{-1}\right) \mathbb{E} \left[ \left \vert \overline{Z}_n -\mathbb{E}\left[ \overline{Z}_n \vert \widetilde{X}_n\right] \right \vert^2 \right]  + 3\left(1+ \lambda_4\right) \left (  2 \mathbb{E} \left[ \Big \vert \widetilde{\widetilde{Z}}_n - \overline{Z}_n \Big \vert^2 \right] + \mathbb{E} \left[\Big \vert \mathbb{E}\Big[ \widetilde{\widetilde{Z}}_n \vert \widetilde{X}_n\Big] - \widetilde{Z}_n \Big \vert^2 \right]\right ). \label{eq:FinalProof1}
\end{align}
Furthermore, we have 
\begin{align}
\sum_{n=0}^{M-1} \Delta t \mathbb{E} \left[ \Big \vert \widetilde{\widetilde{Z}}_n - \overline{Z}_n \Big \vert^2 \right]&\leq 2 \sum_{n=0}^{M-1} \int_{t_n}^{t_{n+1}} \left(\mathbb{E} \left[ \Big \vert \widetilde{\widetilde{Z}}_n -Z_t \Big \vert^2 \right] + \mathbb{E} \left[ \big \vert Z_t - \overline{Z}_n \big \vert^2 \right]\right) \textnormal{d}t \leq {4} C \Delta {t},  \label{eq:FinalProof1_1}
\end{align}
where we use Inequalities \eqref{eq:ErrorEstimateFBSDEWithJumps2} and \eqref{eq:PathregularityEstimate} from Theorem \ref{theorem:ErrorEstimateSchemeDelong} and \ref{theorem:Bouchard_Elie_Path_regularity}, respectively.\\
Next, we focus on the term $ \mathbb{E} \left[ \Big \vert \overline{Z}_n -\mathbb{E}\left[ \overline{Z}_n \vert \widetilde{X}_n\right] \Big \vert^2 \right]$. 
{First note, that $\widetilde{X}_n=\overline{X}_n$ for all $n=0,...,  {M}$. Moreover, by the definition of $\overline{Z}_n$ and Lemma \ref{lemma:SecondEstimateAuxiliary3} there exists a deterministic function $\widetilde{V}_n$ such that $\overline{Z}_n=\widetilde{V}_n(\widetilde{X}_n)$, i.e. $\overline{Z}_n$ is measurable with respect to the $\sigma$-algebra generated by $\widetilde{X}_n$. Thus, we have
\begin{align} \label{eq:FinalProof1_1.0}
    \mathbb{E} \left[ \left \vert \overline{Z}_n -\mathbb{E}\left[ \overline{Z}_n \vert \widetilde{X}_n\right] \right \vert^2 \right]=\mathbb{E} \left[ \left \vert \widetilde{V}_n(\widetilde{X}_n) -\mathbb{E}\left[\widetilde{V}_n(\widetilde{X}_n) \vert \widetilde{X}_n\right] \right \vert^2 \right]=0.
\end{align}
Similar calculations as in \eqref{eq:FinalProof1}, \eqref{eq:FinalProof1_1} and \eqref{eq:FinalProof1_1.0} can also be done for $\bE \left[ \Big \vert  \widetilde{\widetilde{\Psi}}_{n}- \widetilde{\Psi}_{n} \Big \vert^2 \right]$. 
}
This yields
  \begin{align*}
     & \mathbb{E} \left[ \big\vert \widetilde{Y}_M - g(\widetilde{X}_M)\big\vert^2 \right] \nonumber \\
& \leq   3   {\bar H} \left(1+\lambda_3\right) \left(1+\lambda_{4}\right) \left( 4C\Delta t + \sum_{n=0}^{M-1}\mathbb{E} \left[\Big \vert \mathbb{E}\Big[ \widetilde{\widetilde{Z}}_n \vert \widetilde{X}_n\Big] - \widetilde{Z}_n \Big \vert^2 \right] + \sum_{n=0}^{M-1} \mathbb{E} \left[\Big \vert \mathbb{E}\Big[ \widetilde{\widetilde{\Psi}}_n \vert \widetilde{X}_n\Big] - \widetilde{\Psi}_n \Big \vert^2 \right]\right)\nonumber \\
& \quad +  (1+K_g)^2C(\lambda_3) \left(\big \vert Y_0-y \big \vert^2 +  \Delta t+ \textnormal{Error}^{\rho}\right).
  \end{align*}
  {Note that if the condition in \eqref{eq:MallivanDifferentiable} is satisfied then it follows by the Clark-Ocone formula, see e.g. Theorem 3.5.2 in \cite{delong}, that 
\begin{align}
    Z_s=\mathbb{E}[D_s \xi \vert \mathcal{F}_s] \quad \text{ and } \quad U_s(z)=z\mathbb{E}[D_{s,z} \xi \vert \mathcal{F}_s] \quad z \in \mathbb{R}^d, s \in [0,T],
\end{align}
where $D_s \xi $ and $D_{s,z}\xi $ denote the derivatives of $\xi$ in the Malliavin sense, see \cite{SOLE2007165,delong,phdNavarro}. In particular, we know that the processes $Z_s=(Z_s)_{s \in [0,T]}$ and $U(z)=(U_s(z))_{s \in [0,T]}$ are martingales. Thus, by the definition of $\widetilde{\widetilde{Z}}_n$ and $\widetilde{\widetilde{\Psi}}_n$ it holds
\begin{align*}
    \widetilde{\widetilde{Z}}_n&= \frac{1}{\Delta} \mathbb{E}\left[\int_{t_n}^{t_{n+1}} Z_s \textnormal{d}s \vert \mathcal{F}_n \right]=\frac{1}{\Delta} \int_{t_n}^{t_{n+1}} \mathbb{E}\left[ Z_s  \vert \mathcal{F}_n \right] \textnormal{d}s= \frac{1}{\Delta} \Delta Z_{t_n}= Z_{t_n},
\end{align*}
and
\begin{align*}
    \widetilde{\widetilde{\Psi}}_n&= \frac{1}{\Delta}\mathbb{E}\left[\int_{t_n}^{t_{n+1}} \int_{\mathbb{R^d}}U_s(z) \nu(\textnormal{d}z)\textnormal{d}s \vert \mathcal{F}_n \right]=\frac{1}{\Delta} \int_{t_n}^{t_{n+1}} \int_{\mathbb{R^d}}\mathbb{E}\left[ U_s(z)  \vert \mathcal{F}_n \right] \nu(\textnormal{d}z)\textnormal{d}s \\
    & \frac{1}{\Delta} \int_{t_n}^{t_{n+1}} \int_{\mathbb{R^d}} U_{t_n}(z)   \nu(\textnormal{d}z)\textnormal{d}s= \frac{1}{\Delta} \Delta \Psi_{t_n}= \Psi_{t_n},
\end{align*}
where we applied Fubinis theorem as $(Z,U) \in \mathbb{H}_{[0,T]}^2(\mathbb{R^d}) \times \mathbb{H}_{[0,T],N}^2(\mathbb{R})$.}

\end{proof}

Examples of validity of condition \eqref{eq:MallivanDifferentiable} can be found e.g. in Example 3.2 in \cite{delong} or, in a Brownian setting with driver \cite{bcs2015arxiv}.

\subsection{Infinite activity case} We conclude the paper by stating the generalization of the previous result to the case of infinite activity jump processes.
  {
\begin{theorem}
Let Assumptions \ref{assump:ExistenceUniquenessFBSDE} \textbf{(A1)}-\textbf{(A3)} and \ref{assump:NeuralNetwork} hold true. Let $(X^{\epsilon},Y^{\epsilon},Z^{\epsilon},U^{\epsilon} )$ be the solution of the FBSDE \eqref{eq:SDEApproximation}-\eqref{eq:BSDEApproximation} and let $\Psi^{\epsilon}$ be defined in \eqref{eq:IntegralApproximation}. Let $(\widetilde{X}^{\epsilon}_n,\widetilde{Y}^{\epsilon}_n,\widetilde{Z}^{\epsilon}_n,\widetilde{\Psi}^{\epsilon}_n)_{n=0,...,M}$ be defined by \eqref{eq:SchemeDeepSolverOneNeuralNetwork_eps}.
Given $\lambda_3,\lambda_4>0$ there exists a constant $C^{\epsilon}(\lambda_3)$ such that for sufficiently small $\Delta t$ it holds
\begin{align}
&\mathbb{E} \left[ \big\vert \widetilde{Y}_M^{\epsilon} - g(\widetilde{X}_M^{\epsilon}) \big\vert^2 \right]\nonumber \\
&\leq 3   {\bar H} \left(1+\lambda_3\right) \left(1+\lambda_{4}^{-1}\right) \left( 4C_{\epsilon}\Delta t + \sum_{n=0}^{M-1}\mathbb{E} \left[\left \vert \mathbb{E}\left[ \widetilde{\widetilde{Z}}_n^{\epsilon} \vert \widetilde{X}_n^{\epsilon}\right] - \widetilde{Z}_n^{\epsilon} \right \vert^2 \right] + \sum_{n=0}^{M-1} \mathbb{E} \left[\left \vert \mathbb{E}\left[ \widetilde{\widetilde{\Psi}}_n^{\epsilon} \vert \widetilde{X}_n^{\epsilon}\right] - \widetilde{\Psi}_n^{\epsilon} \right \vert^2 \right]\right)\nonumber \\
& \quad +  (1+K_g)^2C^{\epsilon}(\lambda_3) \left(\mathbb{E} \left[ \big\vert Y_0^{\epsilon}-\widetilde{Y}_0^{\epsilon} \big\vert^2\right] +  \Delta t+ \textnormal{Error}^{\rho,\epsilon}\right) \label{eq:APrioriEstimatesInfiniteActivity}
\end{align}
with 
\begin{align*}
\widetilde{\widetilde{Z_t}}^{\epsilon}:=\frac{1}{\Delta t} \bE \left[ \int_{t_n}^{t_{n+1}} Z_s^{\epsilon} \textnormal{d}s \Big \vert \cF_{n}\right], \quad \widetilde{\widetilde{\Psi}}_t^{\epsilon}:=\frac{1}{\Delta t} \bE \left[ \int_{t_n}^{t_{n+1}} \Psi_s^{\epsilon} \textnormal{d}s \Big \vert \cF_{n}\right]
\end{align*}
and
\begin{align}
\textnormal{Error}^{\rho,\epsilon}&:=\sup_{n=0,...,M-1}\bigg(\bE \left[   \int_{\bR^d}\left(  u^{\epsilon}(t_n, \widetilde{X}_n^{\epsilon} + \gamma(\widetilde{X}_n^{\epsilon})z) -\cU_n^{\epsilon,\rho_n}(\widetilde{X}_n^{\epsilon}+\gamma(\widetilde{X}_n^{\epsilon})z)\right)^2\nu^{\epsilon}(\textnormal{d}z)\right] \nonumber \\
& \quad \quad + K_{\nu^\epsilon} \bE \left[\left(  u^{\epsilon}(t_n, \widetilde{X}_n^{\epsilon}) -\cU_n^{\epsilon,\rho_n}(\widetilde{X}_n^{\epsilon})\right)^2\right]\bigg). 
\end{align}
Here, $C^{\epsilon}$ is a constant depending on the data of the FBSDE in \eqref{eq:SDEApproximation}-\eqref{eq:BSDEApproximation} and the constant $C^{\epsilon}(\lambda_3)$ depends additionally on $\lambda_3$ 
and $  {\bar H} ^{\epsilon}:=\min_{x \in \mathbb{R}_+} H^{\epsilon}(x)$ for 
\begin{equation} \label{eq:DefinitionFunctionHEpsilon}
H^{\epsilon}(x):=(1+K_g)^2\left(1 + 5(K_f)^2 {x}^{-1} \right) e^{T\left( x+5 (  {\bar K} ^{\epsilon})^2 x^{-1}+(  {\bar K} ^{\epsilon})^2 \max \lbrace{ x^{-1} + 1+ \int_{\bR^d} \vert z \vert^2 \nu^{\epsilon}(\textnormal{d}z ), 5 x^{-1} \rbrace} \right)}
\end{equation} 
 with {$  {\bar K} ^{\epsilon}:=\max\lbrace { K_b,K_{\sigma^{\epsilon}},K_{\gamma}, K_f, K_{\nu^{\epsilon}} \rbrace}$.} Moreover, $u^{\epsilon}:[0,T]\times \mathbb{R}^d \to \mathbb{R}$ is assumed to be a $C^{1,2}$-solution of the PIDE
 \begin{align}
 -u^{\epsilon}_t(t, x)-\mathscr{L}^{\epsilon} u^{\epsilon}(t, x) -f\left(t, x, u^{\epsilon}(t, x), D_x u^{\epsilon}(t, x) \sigma_{\epsilon}(x), \mathscr{J}^{\epsilon} u^{\epsilon}(t, x)\right)&=0, \quad \quad(t, x) \in[0, T) \times \mathbb{R}^d, \label{eq:PIDE1Epsilon}\\
 u^{\epsilon}(T, x)&=g(x), \quad x \in \mathbb{R}^d, \label{eq:PIDE2Epsilon}
\end{align}
where 
\begin{align}
\mathscr{L}^{\epsilon} u^{\epsilon}(t, x)= & \left\langle {b}(x), D_x u^{\epsilon}(t, x)\right\rangle+\frac{1}{2}\left\langle\sigma_{\epsilon}(x) D_x^2 u^{\epsilon}(t, x), \sigma_{\epsilon}(x)\right\rangle \nonumber  \\
& +\int_{\mathbb{R}^d}\left(u^{\epsilon}(t, x+\gamma(x) z)-u(t, x)-\left\langle\gamma(x) z, D_x u^{\epsilon}(t, x)\right\rangle\right) \nu^{\epsilon}(d z),  \label{eq:PIDE3Epsilon}\\
\mathscr{J}^{\epsilon} u^{\epsilon}(t, x)= & \int_{\mathbb{R}^d}(u^{\epsilon}(t, x+\gamma(x) z)-u^{\epsilon}(t, x)) \nu^{\epsilon}(d z) , \nonumber 
\end{align}
such that $u^{\epsilon}$ is $\frac{1}{2}$-H\"older-continuous in time and satisfies
$$
\vert u^{\epsilon}(t,x) \vert \leq C \left(1+ \vert x \vert \right), \quad \vert D_x u^{\epsilon}(t,x) \vert \leq C \left(1+ \vert x \vert \right) \quad \text{for } (t,x) \in [0,T] \times \mathbb{R}^d. 
$$
\end{theorem}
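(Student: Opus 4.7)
My plan is to derive \eqref{eq:APrioriEstimatesInfiniteActivity} by a direct reduction to the finite-activity a priori estimate of Theorem \ref{theorem:EstimateFinalFiniteActivity}, applied to the truncated FBSDE \eqref{eq:SDEApproximation}-\eqref{eq:BSDEApproximation}. The key observation is that, for every fixed $\epsilon \in (0,1]$, the jump measure of the truncated problem is $\nu^{\epsilon}$, which is finite by construction ($K_{\nu^{\epsilon}} = \nu^{\epsilon}(\mathbb{R}^d) < \infty$), so Assumption \ref{assump:FiniteActivity} is automatically satisfied for the $\epsilon$-dynamics and the machinery of Section \ref{sec:aPrioriEstimates} becomes available with $\nu$ replaced by $\nu^{\epsilon}$.

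First, I would verify that Assumptions \ref{assump:ExistenceUniquenessFBSDE} \textbf{(A1)}-\textbf{(A3)} hold for the truncated coefficients. The drift $b$, the terminal cost $g$ and the generator $f$ are unchanged. The diffusion coefficient $\sigma_{\epsilon} = \sigma + \gamma\sqrt{\Sigma_{\epsilon}}$ is Lipschitz with constant $K_{\sigma_\epsilon} = K_\sigma + K_\gamma |\sqrt{\Sigma_\epsilon}|$ (as recorded before Proposition \ref{prop:InfiniteActivityEstimate}), and $\Gamma(x,z)=\gamma(x)z$ retains the form \eqref{eq:Gamma}. By hypothesis, $u^{\epsilon}\in C^{1,2}([0,T]\times\mathbb{R}^d)$ solves the truncated PIDE \eqref{eq:PIDE1Epsilon}-\eqref{eq:PIDE2Epsilon} with the same linear growth and $\tfrac12$-H\"older time regularity as required in Theorem \ref{theorem:LinkPIDE}, so the Feynman-Ka\v{c} representations \eqref{eq:RelationY}-\eqref{eq:RelationU} transfer to $(Y^\epsilon,Z^\epsilon,U^\epsilon)$ with $u$ replaced by $u^\epsilon$ and $\Gamma(x,z)$ integrated against $\nu^\epsilon$.

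Second, I would apply Proposition \ref{LemmaSecondEstimateAuxiliary} and Theorem \ref{theorem:EstimateFinalFiniteActivity} \emph{verbatim} to the truncated FBSDE. The auxiliary path-regularity result (Theorem \ref{theorem:Bouchard_Elie_Path_regularity}) and the discretization error (Theorem \ref{theorem:ErrorEstimateSchemeDelong}) both hold for the $\epsilon$-problem since they only require (A1), (A2) and finite activity, all of which are met. The definition of $\textnormal{Error}^{\rho_n}$ (cf.~\eqref{eq:DefiError}) transfers, with $u$ replaced by $u^\epsilon$ and $\nu$ by $\nu^\epsilon$, yielding exactly $\textnormal{Error}^{\rho,\epsilon}$ as stated. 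Correspondingly, the constants $\bar K$ and $\bar H$ produced along the proof become their $\epsilon$-dependent analogues $\bar K^\epsilon := \max\{K_b, K_{\sigma_\epsilon}, K_\gamma, K_f, K_{\nu^\epsilon}\}$ and $\bar H^\epsilon := \min_{x\in\mathbb{R}_+} H^\epsilon(x)$ defined in \eqref{eq:DefinitionFunctionHEpsilon}. The decomposition of $\mathbb{E}[|\widetilde{\widetilde{Z}}_n^\epsilon - \widetilde{Z}_n^\epsilon|^2]$ and $\mathbb{E}[|\widetilde{\widetilde{\Psi}}_n^\epsilon - \widetilde{\Psi}_n^\epsilon|^2]$ via the intermediate scheme $(\overline{X}^\epsilon_n, \overline{Y}^\epsilon_n, \overline{Z}^\epsilon_n, \overline{\Psi}^\epsilon_n)$ of \eqref{eq:SchemeDelong}, together with the Feynman-Ka\v{c} type Lemma \ref{lemma:SecondEstimateAuxiliary3} applied to the truncated scheme (which yields measurability of $\overline{Z}^\epsilon_n, \overline{\Psi}^\epsilon_n$ with respect to $\sigma(\widetilde{X}^\epsilon_n)$), gives \eqref{eq:APrioriEstimatesInfiniteActivity} with the stated constants.

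The main obstacle is not a new mathematical difficulty but a bookkeeping one: the constants $\bar K^\epsilon$, $\bar H^\epsilon$, $C^\epsilon$ and $C^\epsilon(\lambda_3)$ all depend on $\epsilon$ through $K_{\nu^\epsilon}$ and $K_{\sigma_\epsilon}$, and degenerate as $\epsilon\to 0$ (since $K_{\nu^\epsilon}\to\nu(\mathbb{R}^d)$ may be infinite). One must therefore be careful to formulate the conclusion for \emph{fixed} $\epsilon$, as is done in the statement. A secondary point to verify is the existence of the $C^{1,2}$-solution $u^\epsilon$ with the required growth and H\"older regularity, but this is posed as a hypothesis, so no further work on it is needed. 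With these observations the proof reduces to a clean invocation of Theorem \ref{theorem:EstimateFinalFiniteActivity}.
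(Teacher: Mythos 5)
Your proposal is correct and matches the paper's argument, which is literally a one-line invocation of Theorem \ref{theorem:EstimateFinalFiniteActivity} applied to the truncated FBSDE \eqref{eq:SDEApproximation}-\eqref{eq:BSDEApproximation}; you simply spell out the verification (finiteness of $\nu^{\epsilon}$, Lipschitz constant of $\sigma_{\epsilon}$, replacement of $u$, $\nu$, $\bar K$, $\bar H$ by their $\epsilon$-analogues) that the paper leaves implicit. The only cosmetic discrepancy is the factor $(1+\lambda_4^{-1})$ versus $(1+\lambda_4)$ between the two theorem statements, which is immaterial since $\lambda_4>0$ is arbitrary.
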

\begin{proof}
This follows by an application of Theorem \ref{theorem:EstimateFinalFiniteActivity}.
\end{proof}}

\bibliographystyle{agsm}
\bibliography{sample}

@ARTICLE{bcs2015arxiv,
       author = {{Bichuch}, Maxim and {Capponi}, Agostino and {Sturm}, Stephan},
        title = "{Arbitrage-Free Pricing of XVA -- Part I: Framework and Explicit Examples}",
      journal = {arXiv e-prints},
     keywords = {Quantitative Finance - Pricing of Securities, Mathematics - Probability, 91G40, 91G20, 60H10},
         year = 2015,
        month = jan,
          eid = {arXiv:1501.05893},
        pages = {arXiv:1501.05893},
          doi = {10.48550/arXiv.1501.05893},
archivePrefix = {arXiv},
       eprint = {1501.05893},
 primaryClass = {q-fin.PR},
       adsurl = {https://ui.adsabs.harvard.edu/abs/2015arXiv150105893B},
      adsnote = {Provided by the SAO/NASA Astrophysics Data System}
}

@phdthesis{phdNavarro,
	address = {United States -- Indiana},
	an = {1791151789},
	annote = {Copyright - Database copyright ProQuest LLC; ProQuest does not claim copyright in the individual underlying works.; Ultimo aggiornamento - 2024-07-18},
	author = {Navarro, Rolando D. , Jr.},
	date = {2015},
	date-added = {2024-12-18 11:17:01 +0100},
	date-modified = {2024-12-18 11:17:01 +0100},
	db = {ProQuest Dissertations \& Theses Global},
	isbn = {978-1-339-69660-7},
	journal = {ProQuest Dissertations and Theses},
	number = {10106210},
	pages = {207},
	pp = {United States -- Indiana},
	school = {Purdue University},
	title = {Malliavin calculus in the Canonical L{\'e}vy process: White noise theory and financial applications},
	type = {Ph.D.},
	url = {https://www.proquest.com/dissertations-theses/malliavin-calculus-canonical-lévy-process-white/docview/1791151789/se-2?accountid=14836},
	year = {2015},
	year1 = {2015},
	bdsk-url-1 = {https://www.proquest.com/dissertations-theses/malliavin-calculus-canonical-l%C3%A9vy-process-white/docview/1791151789/se-2?accountid=14836}}

@article{SOLE2007165,
title = {Canonical L\'evy process and Malliavin calculus},
journal = {Stochastic Processes and their Applications},
volume = {117},
number = {2},
pages = {165-187},
year = {2007},
issn = {0304-4149},
doi = {https://doi.org/10.1016/j.spa.2006.06.006},
author = {Josep Llu\'is Sol\'e' and Frederic Utzet and Josep Vives},
keywords = {L\'evy processes, Malliavin calculus, Skorohod integral}
}

@book{Zhang2017,
  title = {Backward Stochastic Differential Equations},
  ISBN = {9781493972562},
  ISSN = {2199-3149},
  DOI = {10.1007/978-1-4939-7256-2},
  journal = {Probability Theory and Stochastic Modelling},
  publisher = {Springer New York},
  author = {Zhang,  Jianfeng},
  year = {2017}
}

@article{pardoux1990adapted,
  author    = {Eric Pardoux and Shanjian Peng},
  title     = {Adapted solution of a backward stochastic differential equation},
  journal   = {Systems \& Control Letters},
  year      = {1990},
  volume    = {14},
  number    = {3},
  pages     = {55--61},
  doi       = {10.1016/0167-6911(90)90031-K},
}

@article{Merton1976,
  author    = {Robert C. Merton},
  title     = {Option pricing when underlying stock returns are discontinuous},
  journal   = {Journal of Financial Economics},
  year      = {1976},
  volume    = {3},
  pages     = {125--144},
}

@article{duffie2000transform,
  author    = {Darrell Duffie and Jun Pan and Kenneth Singleton},
  title     = {Transform Analysis and Option Pricing for Affine Jump-Diffusions},
  journal   = {Econometrica},
  year      = {2000},
  volume    = {68},
  number    = {6},
  pages     = {1343--1376},
}

@article{carr2002fine,
  author    = {Peter Carr and Hélyette Geman and Dilip B. Madan and Marek Yor},
  title     = {The Fine Structure of Asset Returns: An Empirical Investigation},
  journal   = {Journal of Business},
  year      = {2002},
  volume    = {75},
  number    = {2},
  pages     = {305--332},
  doi       = {10.1086/339825},
}

@BOOK{Hilber2013,
  title     = "Computational methods for quantitative finance",
  author    = "Hilber, Norbert and Reichmann, Oleg and Schwab, Christoph and
               Winter, Christoph",
  publisher = "Springer",
  series    = "Springer Finance",
  edition   =  2013,
  month     =  feb,
  year      =  2013,
  address   = "Berlin, Germany",
  language  = "en"
}

@book{Cont2003,
author = {Cont, R and Tankov, P},
publisher = {Chapman \& Hall/CRC},
title = {Financial Modelling with Jump Processes},
year = {2003}
}

@article{Han_Long_2020,
title={Convergence of the deep {B}{S}{D}{E} method for coupled {F}{B}{S}{D}{E}s},
author={Han, Jiequn and Long, Jihao},
journal={Probability, {U}ncertainty and {Q}uantitative {R}isk},
volume={5},
number={5},
year={2020}
}

@book{delong,
title={Backward {S}tochastic {D}ifferential {E}quations with {J}umps and {T}heir {A}ctuarial and {F}inancial {A}pplications},
author={Delong, Lukasz},
year={2017},
publisher={Springer},
place={Berlin, New York}
}

@article{bouchard_elie_2008,
title={Discrete-time approximation of decoupled {F}orward-{B}ackward {S}{D}{E} with jumps},
author={Bouchard, Bruno and Elie, Romuald},
journal={Stochastic {P}rocesses and their {A}pplications},
year={2008},
pages={53--75}
}

@article{gnoatto_patacc_picarelli,
title={A deep solver for {B}{S}{D}{E}s with jumps},
author={Andersson, Kristoffer and Gnoatto, Alessandro and Patacca, Marco and Picarelli, Athena},
journal={arXiv preprint arXiv:2211.04349},
year={2024}
}

@book{schilling_book_2005,
title={Measures, {I}ntegrals and {M}artingales},
author={Schilling, Ren\'{e} L.},
publisher={Cambridge University Press},
year={2005}
}

@article{jakobsen_mazid_2024,
title={A deep {B}{S}{D}{E} approximation of nonlinear {I}ntegro-{P}{D}{E}s with unbounded nonlocal operators},
author={Jakobsen, Espen R. and Mazid, Sehail},
journal={arXiv preprint arXiv:2407.09284},
year={2024}
}

@article{barles_buckdahn_pardoux,
title={Backward stochastic differential equations and integral-partial differential equatioins},
author={Barles, Guy and Buckdahn, Rainer and Pardoux, Etienne},
journal={Stochastics and Stochastics Report},
volume={60},
pages={57--83},
year={1997}
}

@article{asmussen_rosinski_2001,
title={Appoximation of small jumps of L\'evy processes with a view towards simulation},
author={Asmussen, S. and Rosinski, J.},
journal={Journal of Applied Probability},
volume={38},
number={2},
pages={482-493},
year={2001}
}

@article{cohen_rosinski_2007,
title={Gaussian approximation of multivariate L\'evy processes with applications to simulation of tempered stable processes},
author={Cohen, S. and Rosinski, J.},
journal={Bernoulli},
volume={13},
number={1},
pages={195--210},
year={2007}
}

@article{jum_2015,
title={Numerical {A}pproximation of {S}tochastic {D}ifferential {E}quations {D}riven by {L}evy {M}otion with {I}nfinitely {M}any {J}umps},
author={Jum, E.},
journal={PhD Thesis, University of Tennessee},
year={2015}
}

@book{applebaum,
title={L\'evy {P}rocesses and {S}tochastic {C}alculus},
author={Applebaum, David},
publisher={Cambridge University Press},
year={2011}
}

@article{ehanjen17,
	author = {E, W. and Han, J. and Jentzen, A. },
	title = {Deep learning-based numerical methods for high-dimensional parabolic partial differential equations and
backward stochastic differential equations},
	year = {2017},
	journal = {Communications in Mathematics and Statistics },
	pages = {349-380},
	volume ={5}
}

@article{cont_voltchkova,
	author = {Cont, Rama and Voltchkova, Ekaterina},
	title = {Integro-differential equations for option prices in exponential L\'evy prices},
	year = {2005},
	journal = {Finance and {S}tochastics},
	pages = {299-325},
	volume ={9}
}

@article{hpw2021,
  author = {Hur\'e,  C\^ome and Pham,  Huy\^en and Warin,  Xavier},
  title = {Deep backward schemes for high-dimensional nonlinear {P}{D}{E}s},
  journal = {Mathematics of Computation},
  year = {2020},
  volume = {89},
  number = {324},
  pages = {1547-1579 }
}

@article{bbcjn2021,
author = {Beck, C. and Becker, S. and Cheridito, P. and Jentzen, A. and Neufeld, A.},
title = {Deep Splitting Method for Parabolic {P}{D}{E}s},
journal = {SIAM Journal on Scientific Computing},
volume = {43},
number = {5},
pages = {A3135-A3154},
year = {2021},
doi = {10.1137/19M1297919}
}

@article{SIRIGNANO2018,
title = {{D}{G}{M}: A deep learning algorithm for solving partial differential equations},
journal = {Journal of Computational Physics},
volume = {375},
pages = {1339-1364},
year = {2018},
issn = {0021-9991},
doi = {https://doi.org/10.1016/j.jcp.2018.08.029},
author = {Sirignano, J. and Spiliopoulos, K.},
keywords = {Partial differential equations, Machine learning, Deep learning, High-dimensional partial differential equations},
}

@article{wang_wang_li_gao_fu,
title = {Deep learning numerical methods for high-dimensional fully non-linear {P}{I}{D}{E}s and coupled {F}{B}{S}{D}{E}s with jumps},
journal = {arXiv:2301.12895},
year = {2023},
author = {Wang, W. and Wang, J. and Li, F. and Gao, F. and Fu, Y.}
}

@ARTICLE{castro2021,
 author = {Castro, J.},
  title = {Deep learning schemes for parabolic nonlocal integro-differential equations},
  journal = {Partial Differ. Equ. Appl.},
  volume = {3},
  pages = {77},
  year = {2022},
  doi = {10.1007/s42985-022-00213-z}
}

@ARTICLE{vk2022,
       author = {{Frey}, R. and {K{\"o}ck}, V.},
        title = "{Convergence Analysis of the Deep Splitting Scheme: the Case of Partial Integro-Differential Equations and the associated FBSDEs with Jumps}",
      journal = {arXiv e-prints},
     keywords = {Mathematics - Numerical Analysis, Mathematics - Analysis of PDEs},
         year = 2022,
        month = jun,
          eid = {arXiv:2206.01597},
        pages = {arXiv:2206.01597},
archivePrefix = {arXiv},
       eprint = {2206.01597},
 primaryClass = {math.NA},
       adsurl = {https://ui.adsabs.harvard.edu/abs/2022arXiv220601597F},
      adsnote = {Provided by the SAO/NASA Astrophysics Data System}
}

@ARTICLE{vk2021,
       author = {{Frey}, R. and {K{\"o}ck}, V.},
        title = "{Deep Neural Network Algorithms for Parabolic PIDEs and Applications in Insurance Mathematics}",
 JOURNAL = {Computation},
VOLUME = {10},
YEAR = {2022},
NUMBER = {11},
ARTICLE-NUMBER = {201},
ISSN = {2079-3197},
adsurl = {https://ui.adsabs.harvard.edu/abs/2021arXiv210911403F},
}

@article{abdw24,
       author = {{Alasseur}, C. and {Bensaid}, Z. and {Dumitrescu}, R. and {Warin}, X.},
        title = "{Deep learning algorithms for {FBSDE}s with jumps: applications to option pricing and a {MFG} model for smart grids}",
      journal = {arXiv e-prints},
      year = {2024},
      eid = {arXiv:2401.03245},
      pages = {arXiv:2401.03245},
      archivePrefix = {arXiv},
      eprint = {2401.03245},
}

\end{document}